\newcommand{\Addresses}{{
  \bigskip
  \footnotesize

  \textsc{Department of Mathematical Sciences, Norwegian University of Science and Technology,\\ 7491 Trondheim, Norway.}\par\nopagebreak
  \textit{E-mail address}: \texttt{eirik.berge@ntnu.no}
}}
\setlist[itemize]{itemsep=0mm} 
\numberwithin{equation}{section} 
\titleformat*{\section}{\Large \scshape\center} 
\titleformat*{\subsection}{\fontsize{14}{14} \sffamily} 
\theoremstyle{plain}
\newtheorem{theorem}{Theorem}[section]
\newtheorem*{theorem*}{Theorem} 
\newtheorem{lemma}[theorem]{Lemma}
\newtheorem*{conjecture*}{Conjecture}
\newtheorem{proposition}[theorem]{Proposition}
\newtheorem{corollary}[theorem]{Corollary}
\theoremstyle{definition}
\newtheorem{definition}[theorem]{Definition}
\newtheorem{example}[theorem]{Example}
\theoremstyle{remark}
\newtheorem*{remark}{Remark}
\begin{document}
\pagenumbering{gobble}
\title{\Huge{Interpolation in Wavelet Spaces and the HRT-Conjecture}}
\author{Eirik Berge}
\date{}
\maketitle

\pagenumbering{arabic}

\begin{abstract}
We investigate the wavelet spaces $\mathcal{W}_{g}(\mathcal{H}_{\pi})\subset L^{2}(G)$ arising from square integrable representations $\pi:G \to \mathcal{U}(\mathcal{H}_{\pi})$ of a locally compact group $G$. We show that the wavelet spaces are rigid in the sense that non-trivial intersection between them imposes strong restrictions. Moreover, we use this to derive consequences for wavelet transforms related to convexity and functions of positive type. Motivated by the reproducing kernel Hilbert space structure of wavelet spaces we examine an interpolation problem. In the setting of time-frequency analysis, this problem turns out to be equivalent to the HRT-Conjecture. Finally, we consider the problem of whether all the wavelet spaces $\mathcal{W}_{g}(\mathcal{H}_{\pi})$ of a locally compact group $G$ collectively exhaust the ambient space $L^{2}(G)$. We show that the answer is affirmative for compact groups, while negative for the reduced Heisenberg group. 
\end{abstract}

\section{Introduction}

In recent years there have been several fruitful connections between time-frequency analysis and abstract notions in both representation theory \cite{enstad2019balian, grochenig2019balian, grochenig2018orthonormal} and non-commutative geometry \cite{austad2020heisenberg, jakobsen2018duality, luef2009projective, luef2018balian}. This is mutually beneficial: The abstract machinery can illuminate many results in time-frequency analysis. On the other hand, the concrete setting of time-frequency analysis provides a useful playground for testing general conjectures. Building on this viewpoint, we consider a generalization of the \textit{Gabor spaces} \[V_{g}(L^{2}(\mathbb{R}^{n})) \subset L^{2}(\mathbb{R}^{2n}),\] where $V_{g}{f}$ is the short-time Fourier transform (STFT) of $f \in L^{2}(\mathbb{R}^{n})$ with respect to a non-zero window function $g \in L^{2}(\mathbb{R}^{n})$. The Gabor spaces have appeared explicitly in the time-frequency literature several times, e.g. \cite{abreu2015measures, hutnikova2010alternative}, as well as being implicitly present in much of the literature concerning the STFT. We refer the reader to \cite[Proposition 3.4.1]{grochenig2001foundations} where the connection between a certain Gabor space and the Bargmann-Fock space in complex analysis is described. Despite their importance, it is only recently that some of the basic properties of Gabor spaces have been examined in \cite{luef2020wiener}. Our goal is to derive results that are of interest both in the general setting and in the case of Gabor spaces. \par
Let us briefly describe the general setup of the paper. Consider a square integrable representation $\pi:G \to \mathcal{U}(\mathcal{H}_{\pi})$ of a locally compact group $G$ on a Hilbert space $\mathcal{H}_{\pi}$. We investigate the \textit{wavelet spaces} \[\mathcal{W}_{g}(\mathcal{H}_{\pi}) \subset L^{2}(G), \qquad \mathcal{W}_{g}f(x) := \langle f, \pi(x)g \rangle,\] where $g \in \mathcal{H}_{\pi}$ is an admissible vector and $x \in G$. The classical Gabor space $V_{g}(L^{2}(\mathbb{R}^{n}))$ is up to a phase-factor the wavelet space corresponding to the Schr\"{o}dinger representation of the reduced Heisenberg group $\mathbb{H}_{r}^{n}$. Wavelet spaces have appeared in the theory of coorbit spaces \cite{feichtinger1988unified, feichtinger1989banach1, feichtinger1989banach2} and have been independently studied in \cite{grossmann1985transforms, cont_wavelet_transform, wong2002wavelet}. The following result illustrates the rigidity of wavelet spaces.

\begin{theorem}
\label{first_result_introduction}
Let $\pi:G \to \mathcal{U}(\mathcal{H}_{\pi})$ and $\rho:G \to \mathcal{U}(\mathcal{H}_{\rho})$ be two square integrable representations with admissible vectors $g \in \mathcal{H}_{\pi}$ and $h \in \mathcal{H}_{\rho}$. Assume that the corresponding wavelet spaces intersect non-trivially, that is, \[\mathcal{W}_{g}(\mathcal{H}_{\pi}) \cap \mathcal{W}_{h}(\mathcal{H}_{\rho}) \neq \{0\}.\] Then $\mathcal{W}_{g}(\mathcal{H}_{\pi}) = \mathcal{W}_{h}(\mathcal{H}_{\rho})$ and there exists a unitary intertwining operator $T:\mathcal{H}_{\pi} \to \mathcal{H}_{\rho}$ satisfying $T(g) = h$.
\end{theorem}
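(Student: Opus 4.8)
The plan is to exploit two structural features of the wavelet transform: the orthogonality (Moyal) relations and its covariance with respect to the left regular representation. Since $\pi$ and $\rho$ are square integrable, hence irreducible, the orthogonality relations guarantee that $\mathcal{W}_g$ and $\mathcal{W}_h$ are isometries (possibly after a harmless rescaling folded into admissibility) onto their images; in particular the wavelet spaces are closed subspaces of $L^2(G)$ carrying a reproducing kernel Hilbert space structure inherited from the $L^2(G)$ inner product. A direct computation gives the covariance identity $\mathcal{W}_g(\pi(y)f) = L_y(\mathcal{W}_g f)$, where $L_y$ denotes left translation, so that $\mathcal{W}_g$ intertwines $\pi$ with the left regular representation $L$ of $G$; consequently both $\mathcal{W}_g(\mathcal{H}_\pi)$ and $\mathcal{W}_h(\mathcal{H}_\rho)$ are closed left-invariant subspaces of $L^2(G)$.

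I would first establish the equality of the two spaces. Set $N := \mathcal{W}_g(\mathcal{H}_\pi)\cap\mathcal{W}_h(\mathcal{H}_\rho)$, a nonzero closed subspace that is left-invariant, being the intersection of two left-invariant subspaces. Because $\mathcal{W}_g$ is an isometric intertwiner between $\pi$ and $L$, the preimage $\mathcal{W}_g^{-1}(N)$ is a nonzero closed $\pi$-invariant subspace of $\mathcal{H}_\pi$. Irreducibility of $\pi$ then forces $\mathcal{W}_g^{-1}(N) = \mathcal{H}_\pi$, i.e. $\mathcal{W}_g(\mathcal{H}_\pi) = N \subseteq \mathcal{W}_h(\mathcal{H}_\rho)$. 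Exchanging the roles of $g$ and $h$ yields the reverse inclusion, so $\mathcal{W}_g(\mathcal{H}_\pi) = \mathcal{W}_h(\mathcal{H}_\rho) =: M$.

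Next I would construct the intertwiner. Since the transforms share the common image $M$, both $\mathcal{W}_g:\mathcal{H}_\pi \to M$ and $\mathcal{W}_h:\mathcal{H}_\rho\to M$ are unitary, so $T := \mathcal{W}_h^{-1}\circ\mathcal{W}_g : \mathcal{H}_\pi \to \mathcal{H}_\rho$ is unitary. Combining the two covariance identities gives $T\pi(y) = \mathcal{W}_h^{-1}L_y\mathcal{W}_g = \rho(y)T$ for every $y \in G$, so $T$ is an intertwining operator. It remains to verify $T(g) = h$, equivalently $\mathcal{W}_g g = \mathcal{W}_h h$. For this I would use that $M$, as a subspace of $L^2(G)$, has a unique reproducing kernel; computing it from each transform gives $K(x,y) = \langle \pi(y)g, \pi(x)g\rangle$ on one side and $K(x,y) = \langle \rho(y)h, \rho(x)h\rangle$ on the other. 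Evaluating at $y = e$ produces $\langle g, \pi(x)g\rangle = \langle h, \rho(x)h\rangle$ for all $x \in G$, that is $\mathcal{W}_g g = \mathcal{W}_h h$, whence $T(g) = \mathcal{W}_h^{-1}\mathcal{W}_g g = h$.

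I expect the identification $T(g) = h$ to be the main obstacle, since the equality of the two wavelet spaces is a priori only an equality of sets; the argument must leverage that the reproducing kernel is intrinsic to the common subspace $M$ together with its inherited $L^2(G)$ inner product, and that the admissible vectors are normalized compatibly. The use of irreducibility in the first step is equally essential: without it the pulled-back invariant subspace need not exhaust $\mathcal{H}_\pi$, and the conclusion would fail.
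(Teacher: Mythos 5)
Your proof is correct and follows essentially the same route as the paper: left-invariance of the intersection together with irreducibility (via the intertwining property of the wavelet transforms) forces $\mathcal{W}_{g}(\mathcal{H}_{\pi}) = \mathcal{W}_{h}(\mathcal{H}_{\rho})$, and the uniqueness of the reproducing kernel of this common subspace with its inherited $L^{2}(G)$-norm yields $\mathcal{W}_{g}g = \mathcal{W}_{h}h$. Your construction $T = \mathcal{W}_{h}^{-1}\circ\mathcal{W}_{g}$ is a mild streamlining of the paper's version, which instead defines $T$ on the dense orbit span $\mathrm{span}\{\pi(x)g\}_{x \in G}$ and uses the kernel equality to verify isometry before extending.
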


A special case of Theorem \ref{first_result_introduction} reduces to the result in \cite[Theorem 4.2]{cont_wavelet_transform}. There are also two other noteworthy consequences of Theorem \ref{first_result_introduction} related to functions of positive type and convexity.

\begin{corollary}
Let $\pi:G \to \mathcal{U}(\mathcal{H}_{\pi})$ and $\rho:G \to \mathcal{U}(\mathcal{H}_{\rho})$ be square integrable representations with admissible vectors $g \in \mathcal{H}_{\pi}$ and $h \in \mathcal{H}_{\rho}$, respectively. Then $\mathcal{W}_{g}g - \mathcal{W}_{h}h$ is never a non-zero function of positive type.
\end{corollary}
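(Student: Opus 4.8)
The plan is to argue by contradiction: assume $\varphi := \mathcal{W}_{g}g - \mathcal{W}_{h}h$ is a non-zero function of positive type, and show this forces the two wavelet spaces to intersect non-trivially, so that Theorem~\ref{first_result_introduction} applies and yields a contradiction. First I would record the elementary facts about the diagonal coefficients. Since $\mathcal{W}_{g}g(x) = \langle g, \pi(x)g\rangle = \overline{\langle \pi(x)g, g\rangle}$ is the complex conjugate of a diagonal matrix coefficient, it is a continuous function of positive type, and likewise for $\mathcal{W}_{h}h$; both are non-zero, as their value at the identity is $\|g\|^{2}>0$, respectively $\|h\|^{2}>0$. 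Moreover $\mathcal{W}_{g}g = \mathcal{W}_{g}(g) \in \mathcal{W}_{g}(\mathcal{H}_{\pi})$ and $\mathcal{W}_{h}h \in \mathcal{W}_{h}(\mathcal{H}_{\rho})$. The crucial extra input is that, because $\pi$ is square integrable and hence irreducible, every non-zero vector is cyclic and the GNS representation attached to $\langle \pi(\cdot)g, g\rangle$ is $\pi$ itself; thus this coefficient is a \emph{pure} function of positive type. Purity is preserved under complex conjugation (the conjugate representation $\overline{\pi}$ is again irreducible), so $\mathcal{W}_{g}g$ is a pure function of positive type.

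Next I would invoke the domination property of pure functions of positive type: if $\phi$ is pure and both $\psi$ and $\phi-\psi$ are of positive type, then $\psi = t\,\phi$ for some $t \in [0,1]$, which is the classical characterization of the functions of positive type dominated by an extreme one (via the order isomorphism with positive operators in the commutant of the GNS representation). Applying this with $\phi = \mathcal{W}_{g}g$, $\psi = \mathcal{W}_{h}h$, and $\phi - \psi = \varphi$, all of positive type by assumption, gives $\mathcal{W}_{h}h = t\,\mathcal{W}_{g}g$ for some $t \in [0,1]$. Since $\mathcal{W}_{h}h \neq 0$ we have $t > 0$, and since $\varphi = (1-t)\,\mathcal{W}_{g}g \neq 0$ we have $t < 1$; hence $0 < t < 1$.

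Finally I would harvest the contradiction. Proportionality shows that the non-zero function $\mathcal{W}_{h}h$ lies in $\mathcal{W}_{h}(\mathcal{H}_{\rho})$ and, being a scalar multiple of $\mathcal{W}_{g}g \in \mathcal{W}_{g}(\mathcal{H}_{\pi})$, also lies in $\mathcal{W}_{g}(\mathcal{H}_{\pi})$; thus $\mathcal{W}_{g}(\mathcal{H}_{\pi}) \cap \mathcal{W}_{h}(\mathcal{H}_{\rho}) \neq \{0\}$. Theorem~\ref{first_result_introduction} then produces a unitary intertwiner $T:\mathcal{H}_{\pi} \to \mathcal{H}_{\rho}$ with $Tg = h$, and the relation $T\pi(x) = \rho(x)T$ together with unitarity of $T$ gives $\mathcal{W}_{h}h(x) = \langle Tg, \rho(x)Tg\rangle = \langle g, \pi(x)g\rangle = \mathcal{W}_{g}g(x)$ for all $x \in G$, i.e.\ $t=1$, contradicting $0 < t < 1$. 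I expect the main obstacle to be the careful justification of the purity/domination step: one must match the convention $\mathcal{W}_{g}g(x) = \langle g, \pi(x)g\rangle$ against the standard matrix-coefficient convention, keep track of the complex conjugation, and correctly apply the extreme-point characterization of pure functions of positive type. The two remaining steps are then either immediate or a direct application of Theorem~\ref{first_result_introduction}.
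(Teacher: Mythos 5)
Your proof is correct, but it takes a genuinely different route from the paper's. The paper's proof of Corollary~\ref{original_consequence_1} is shorter and stays inside the RKHS framework: since $\mathcal{W}_{g}g - \mathcal{W}_{h}h$ being of positive type says exactly that the difference of the two reproducing kernels is a kernel function, Aronszajn's inclusion theorem immediately yields the containment $\mathcal{W}_{h}(\mathcal{H}_{\rho}) \subset \mathcal{W}_{g}(\mathcal{H}_{\pi})$; Theorem~\ref{equality_of_wavelet_spaces_general_statement} then supplies the unitary intertwiner $T$ with $T(g)=h$, and the same coefficient computation you perform at the very end shows directly that $\mathcal{W}_{g}g - \mathcal{W}_{h}h \equiv 0$, with no contradiction setup needed. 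You instead obtain the non-trivial intersection through representation theory: the diagonal coefficient of an irreducible representation is a pure function of positive type, conjugation preserves purity (the GNS representation of $\overline{\langle \pi(\cdot)g,g\rangle}$ is the conjugate representation $\overline{\pi}$, irreducible along with $\pi$), and the standard domination property of pure functions --- the Radon--Nikodym correspondence with positive operators in the commutant, which by Schur's Lemma~\ref{Schur's_lemma} is $\mathbb{C}\cdot Id$ --- forces $\mathcal{W}_{h}h = t\,\mathcal{W}_{g}g$ with $0 < t < 1$, after which Theorem~\ref{first_result_introduction} gives the contradiction $t = 1$. All your steps are sound, and the purity machinery is precisely what the paper itself invokes (via \cite[Theorem C.5.2]{bekka2008kazhdan}) in the proof of Corollary~\ref{convex_combination_corollary}, so your argument fits the paper's toolkit even though it bypasses Aronszajn's theorem; what it buys is the stronger intermediate conclusion of exact proportionality of the two diagonal coefficients. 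Note, incidentally, that once you have $\mathcal{W}_{h}h = t\,\mathcal{W}_{g}g$ you could dispense with Theorem~\ref{first_result_introduction} altogether: evaluation at the identity gives $\|h\|^{2} = t\|g\|^{2}$, while the orthogonality relations \eqref{wavelet_transform_orthogonality} together with admissibility give $\|\mathcal{W}_{h}h\|_{L^{2}(G)} = \|h\|$ and $\|\mathcal{W}_{g}g\|_{L^{2}(G)} = \|g\|$, hence $\|h\| = t\|g\|$; combining the two identities forces $t^{2} = t$, so $t = 1$, the desired contradiction by elementary means.
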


\begin{corollary}
Let $\pi:G \to \mathcal{U}(\mathcal{H}_{\pi})$ be a square integrable representation of a unimodular group $G$ with admissible vectors $g,g_1,g_2 \in \mathcal{H}_{\pi}$. Assume we can write $\mathcal{W}_{g}g$ as a convex combination \[\mathcal{W}_{g}g = t \cdot \mathcal{W}_{g_1}g_{1} + (1 - t) \cdot \mathcal{W}_{g_2}{g_2},\] for some $t \in [0,1]$. Then $t \in \{0,1\}$ and we either have $g = cg_1$ or $g = c g_2$ for some $c \in \mathbb{T}$.
\end{corollary}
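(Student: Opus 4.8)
The plan is to linearize the whole problem through the orthogonality relations for square integrable representations. For a \emph{unimodular} group these relations have the Duflo--Moore operator collapse to a scalar, so there is a constant $d_{\pi}>0$ with
\[
\langle \mathcal{W}_{a}a, \mathcal{W}_{b}b\rangle_{L^{2}(G)} = \tfrac{1}{d_{\pi}}\,|\langle a, b\rangle|^{2}
\]
for all admissible $a,b\in\mathcal{H}_{\pi}$. The decisive structural feature is that every one of these $L^{2}(G)$ pairings is \emph{real and nonnegative}, being a squared modulus of an inner product in $\mathcal{H}_{\pi}$. Writing $A=\mathcal{W}_{g}g$, $B=\mathcal{W}_{g_{1}}g_{1}$, $C=\mathcal{W}_{g_{2}}g_{2}$, the hypothesis reads $A = tB + (1-t)C$, and I intend to squeeze this identity between two scalar relations.

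First I would evaluate at the identity $e\in G$: since $\mathcal{W}_{h}h(e)=\|h\|^{2}$, the convex combination gives $\|g\|^{2} = t\|g_{1}\|^{2} + (1-t)\|g_{2}\|^{2}$. Next I would pair $A=tB+(1-t)C$ against $A$ in $L^{2}(G)$; the common factor $1/d_{\pi}$ cancels and leaves $\|g\|^{4} = t\,|\langle g, g_{1}\rangle|^{2} + (1-t)\,|\langle g, g_{2}\rangle|^{2}$. Multiplying the first identity by $\|g\|^{2}$ and subtracting from the second yields
\[
0 = t\bigl(\|g\|^{2}\|g_{1}\|^{2} - |\langle g, g_{1}\rangle|^{2}\bigr) + (1-t)\bigl(\|g\|^{2}\|g_{2}\|^{2} - |\langle g, g_{2}\rangle|^{2}\bigr).
\]
This is the crux: both parenthesized terms are nonnegative Cauchy--Schwarz defects, so the convex-combination hypothesis forces the relevant defects to vanish.

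From here the argument splits by the weight $t$. If $t\in(0,1)$, both defects vanish, so equality in Cauchy--Schwarz forces $g_{1}$ and $g_{2}$ each to be a scalar multiple of $g$; feeding this back (or appealing to the common normalization of admissible vectors) makes the scalars unimodular, so $g=cg_{1}$ and $g=c'g_{2}$ with $c,c'\in\mathbb{T}$, in which degenerate case $\mathcal{W}_{g_{1}}g_{1}=\mathcal{W}_{g_{2}}g_{2}=\mathcal{W}_{g}g$. The boundary values $t=0$ and $t=1$ give directly $\mathcal{W}_{g}g=\mathcal{W}_{g_{2}}g_{2}$ respectively $\mathcal{W}_{g}g=\mathcal{W}_{g_{1}}g_{1}$; the corresponding wavelet spaces then intersect nontrivially, so Theorem~\ref{first_result_introduction} supplies a unitary self-intertwiner of $\pi$ carrying $g$ to $g_{i}$, which by Schur's lemma is a scalar in $\mathbb{T}$, giving $g=cg_{2}$ or $g=cg_{1}$. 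Assembling the cases produces the stated dichotomy.

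The main obstacle is twofold. The conceptual step is recognizing that the orthogonality relations turn the identity into a sum of nonnegative Cauchy--Schwarz defects; this is precisely where \emph{unimodularity} is indispensable, since otherwise the unbounded Duflo--Moore operator would obstruct the clean passage to $|\langle g, g_{i}\rangle|^{2}$ and destroy the sign structure. The delicate finishing step is upgrading ``$g_{i}$ is proportional to $g$'' to ``$g_{i}$ is a \emph{unimodular} multiple of $g$''; this rests on admissible vectors carrying a fixed normalization (equivalently on Schur's lemma via Theorem~\ref{first_result_introduction}), and it requires careful bookkeeping at the boundary to track how the value of $t$ interacts with the phase conclusion.
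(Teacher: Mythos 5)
Your proof is correct, but it takes a genuinely different route from the paper. The paper's argument is soft: it observes that $\mathcal{W}_{g}g$ lies in the convex set $\mathcal{P}_{c}(G)$ of positive-type functions with fixed value at $e$ (unimodularity guarantees all admissible vectors share the same norm), invokes the theorem of Bekka--de la Harpe--Valette that such functions arising from irreducible representations are \emph{extreme points} of $\mathcal{P}_{c}(G)$ to force $\mathcal{W}_{g}g = \mathcal{W}_{g_1}g_1$ or $\mathcal{W}_{g}g = \mathcal{W}_{g_2}g_2$, and then applies the rigidity result (Corollary \ref{equality_of_wavelet_spaces_special_case}) to extract the phase $c \in \mathbb{T}$. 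You instead make the argument quantitative and self-contained: with $C_{\pi} = c_{\pi}\cdot Id_{\mathcal{H}_{\pi}}$ the orthogonality relations \eqref{wavelet_transform_orthogonality} give $\langle \mathcal{W}_{a}a, \mathcal{W}_{b}b\rangle_{L^{2}(G)} = c_{\pi}^{2}\,|\langle a,b\rangle|^{2} \geq 0$, and pairing the convex identity against $\mathcal{W}_{g}g$ and comparing with the evaluation at $e$ yields your key identity $0 = t\,(\|g\|^{2}\|g_1\|^{2} - |\langle g,g_1\rangle|^{2}) + (1-t)\,(\|g\|^{2}\|g_2\|^{2} - |\langle g,g_2\rangle|^{2})$, a convex combination of Cauchy--Schwarz defects; equality in Cauchy--Schwarz plus the common normalization $\|g\| = \|g_i\| = c_{\pi}^{-1}$ of admissible vectors gives the unimodular phase directly. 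Your approach buys elementarity (no extreme-point theorem, no GNS-type input) and makes visible exactly where unimodularity enters (the scalar Duflo--Moore operator produces the nonnegative sign structure); the paper's approach buys conceptual economy and fits the rigidity theme of the section, and it generalizes to convex combinations where $g_1, g_2$ come from other representations. One small simplification available to you: the detour through Theorem \ref{first_result_introduction} and Schur's lemma at the boundary values $t \in \{0,1\}$ is unnecessary, since your defect identity already applies there --- at $t=1$ it reads $\|g\|^{2}\|g_1\|^{2} = |\langle g,g_1\rangle|^{2}$, and Cauchy--Schwarz equality finishes uniformly in $t$. You are also right, and more careful than the statement itself, to note the degenerate case: when $g_1, g_2$ are both unimodular multiples of $g$, the convex identity holds for \emph{every} $t \in [0,1]$, so the conclusion ``$t \in \{0,1\}$'' should be read modulo that triviality --- the paper's proof via extreme points has the same implicit caveat.
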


It is well-known that any wavelet space carries the structure of a reproducing kernel Hilbert space. This allows us to consider an interpolation problem for the wavelet spaces as follows: Consider distinct points $\{x_1, \dots, x_m\} \subset G$ and possibly non-distinct scalars $\lambda_1, \dots, \lambda_m \in \mathbb{C}$. We investigate whether there exists a function $F \in \mathcal{W}_{g}(\mathcal{H}_{\pi})$ that \textit{interpolates} these points, that is, $F(x_i) = \lambda_i$ for all $i = 1, \dots, m$. When this problem is always solvable the wavelet space $\mathcal{W}_{g}(\mathcal{H}_{\pi})$ is called \textit{fully interpolating}. This is a notion that has been extensively investigated in the reproducing kernel Hilbert space literature, see \cite[Chapter 3]{paulsen2016introduction}. However, in the case of the wavelet spaces the interpolation problem is to our knowledge only briefly mentioned in \cite{grossmann1985transforms}. \par 
We show in Proposition \ref{no_abelian_or_compact_are_fully_interpolating} that no wavelet space corresponding to a compact or abelian group can be fully interpolating. In the Gabor case, the interpolation problem turns out to be equivalent to the HRT-Conjecture regarding independence of time-frequency shifts. We will review the HRT-Conjecture in Section \ref{sec: Relationship_With_the_HRT-Conjecture} and show how it relates to the interpolation problem in Proposition \ref{equvalence_between_fully_interpolating_and_HRT_conjecture}. The partial results obtained for the HRT-Conjecture in the literature gives concrete examples of wavelet spaces that are fully interpolating. On the other hand, the interpolation problem gives an alternative view of the HRT-Conjecture that allows the tools from reproducing kernel Hilbert space theory to be applied. \par 
A theme throughout the paper is to utilize the theory of reproducing kernel Hilbert spaces to deduce properties of wavelet spaces. As an illustration of this, we will give a short proof of the following folklore result showing that tensor products are naturally incorporated in our setting. 

\begin{proposition}
\label{tensor_product_introduction}
Let $\pi:G \to \mathcal{U}(\mathcal{H}_{\pi})$ and $\rho:H \to \mathcal{U}(\mathcal{H}_{\rho})$ be two square integrable representations with admissible vectors $g \in \mathcal{H}_{\pi}$ and $h \in \mathcal{H}_{\rho}$. There is an isomorphism of reproducing kernel Hilbert spaces \[\mathcal{W}_{g \otimes h}(\mathcal{H}_{\pi} \hat{\otimes} \mathcal{H}_{\rho}) \simeq \mathcal{W}_{g}(\mathcal{H}_{\pi}) \hat{\otimes} \mathcal{W}_{h}(\mathcal{H}_{\rho}).\]
\end{proposition}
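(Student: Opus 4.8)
The plan is to reduce the statement to a standard fact about tensor products of reproducing kernel Hilbert spaces by computing reproducing kernels. Recall (as in \cite{paulsen2016introduction}) that if $\mathcal{H}_1$ is an RKHS of functions on $X$ with kernel $K_1$ and $\mathcal{H}_2$ is an RKHS of functions on $Y$ with kernel $K_2$, then the Hilbert space tensor product $\mathcal{H}_1 \hat{\otimes} \mathcal{H}_2$ is canonically realized as an RKHS of functions on $X \times Y$ --- via $F_1 \otimes F_2 \mapsto [(x,y) \mapsto F_1(x)F_2(y)]$ --- whose reproducing kernel is the product $K_1 \cdot K_2$. Since an RKHS is determined by its kernel, it suffices to exhibit $\mathcal{W}_{g \otimes h}(\mathcal{H}_{\pi} \hat{\otimes} \mathcal{H}_{\rho})$ as an RKHS of functions on $G \times H$ whose reproducing kernel factors as the product of the kernels of $\mathcal{W}_{g}(\mathcal{H}_{\pi})$ and $\mathcal{W}_{h}(\mathcal{H}_{\rho})$.

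First I would check that the external tensor product $\pi \otimes \rho$, acting on $\mathcal{H}_{\pi} \hat{\otimes} \mathcal{H}_{\rho}$ by $(\pi \otimes \rho)(x,y) = \pi(x) \otimes \rho(y)$, is a square integrable representation of $G \times H$ with $g \otimes h$ admissible. The key observation is the factorization on simple tensors,
\[ \mathcal{W}_{g \otimes h}(f_1 \otimes f_2)(x,y) = \langle f_1 \otimes f_2, \pi(x)g \otimes \rho(y)h \rangle = \mathcal{W}_{g}f_1(x) \, \mathcal{W}_{h}f_2(y), \]
combined with the unitary identification $L^{2}(G) \hat{\otimes} L^{2}(H) \cong L^{2}(G \times H)$. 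Using the orthogonality relations for $\mathcal{W}_{g}$ and $\mathcal{W}_{h}$, this yields $\|\mathcal{W}_{g \otimes h}(f_1 \otimes f_2)\|_{L^{2}(G \times H)}^{2} = c_{g}c_{h}\|f_1\|^{2}\|f_2\|^{2}$ on simple tensors, where $c_g = \|C_{\pi}g\|^{2}$ and $c_h = \|C_{\rho}h\|^{2}$ come from the Duflo--Moore operators; extending by density shows $\mathcal{W}_{g \otimes h}$ is a constant multiple of an isometry, so $g \otimes h$ is admissible.

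With admissibility established, the reproducing kernel of a wavelet space $\mathcal{W}_{u}(\mathcal{H}_{\sigma})$ is, up to normalization, $K_{u}(a,b) = \langle \sigma(b)u, \sigma(a)u \rangle$, obtained by recognizing $\mathcal{W}_{u}(\sigma(b)u)$ as the reproducing element at $b$. Applied to the tensor product representation this gives
\[ K_{g \otimes h}\big((x,y),(x',y')\big) = \langle (\pi \otimes \rho)(x',y')(g \otimes h), (\pi \otimes \rho)(x,y)(g \otimes h) \rangle = K_{g}(x,x') \, K_{h}(y,y'), \]
so the kernel factors exactly and the general theorem finishes the argument. Concretely, the isomorphism is the restriction of the unitary $L^{2}(G) \hat{\otimes} L^{2}(H) \cong L^{2}(G \times H)$ determined by $\mathcal{W}_{g \otimes h}(f_1 \otimes f_2) \mapsto \mathcal{W}_{g}f_1 \otimes \mathcal{W}_{h}f_2$. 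The main obstacle I anticipate is the bookkeeping of normalizing constants needed to make the kernels factor \emph{exactly} rather than merely up to a scalar: one must confirm $c_{g \otimes h} = c_{g}c_{h}$, equivalently that the Duflo--Moore operator of $\pi \otimes \rho$ is $C_{\pi} \otimes C_{\rho}$. This is transparent for unimodular groups, but in the non-unimodular case --- where the modular function of $G \times H$ is the product of the individual modular functions --- it warrants a short verification. Once the constants are matched, identifying the two reproducing kernel Hilbert spaces is purely formal.
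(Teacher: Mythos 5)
Your proposal is correct and follows essentially the same route as the paper's proof: verify square integrability of $\pi \otimes \rho$ and admissibility of $g \otimes h$ via the factorization of $\mathcal{W}_{g \otimes h}$ on simple tensors, observe that the reproducing kernel factors as the product of the kernels of $\mathcal{W}_{g}(\mathcal{H}_{\pi})$ and $\mathcal{W}_{h}(\mathcal{H}_{\rho})$, and conclude by the uniqueness of the RKHS determined by a kernel on $G \times H$. Your anticipated obstacle about normalizing constants is in fact vacuous under the stated hypotheses: admissibility of $g$ and $h$ means $\|C_{\pi}g\| = \|C_{\rho}h\| = 1$, so the density argument already shows $\mathcal{W}_{g \otimes h}$ is an isometry (hence $g \otimes h$ admissible by the orthogonality relations) and the kernels match exactly without identifying $C_{\pi \otimes \rho}$ with $C_{\pi} \otimes C_{\rho}$.
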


Finally, we would like to mention a problem where we are only able to obtain partial results. For a square integrable representation $\pi:G \to \mathcal{U}(\mathcal{H}_{\pi})$ we let $\mathcal{A}_{\pi}$ denote the equivalence classes of admissible vectors in $\mathcal{H}_{\pi}$ modulo rotations by elements of $\mathbb{T}$. We let $\widehat{G}_{s}$ denote the equivalence classes of square integrable representations of $G$ and consider the possibly non-direct sum of vector spaces
\begin{equation*}
\bigoplus_{\pi \in \widehat{G}_s} \underset{g \in \mathcal{A}_{\pi}}{\textrm{span}}\big\{\mathcal{W}_{g}f \, : \, f \in \mathcal{H}_{\pi}\big\} \subset L^{2}(G).
\end{equation*} 
Is this sum dense in $L^{2}(G)$ when $\widehat{G}_s \neq \emptyset$? Phrased conceptually, we question whether the wavelet spaces are collectively large enough to approximate any square integrable function. We say that a locally compact group $G$ is \textit{wavelet complete} when
\begin{equation*}
\overline{\bigoplus_{\pi \in \widehat{G}_s} \underset{g \in \mathcal{A}_{\pi}}{\textrm{span}}\big\{\mathcal{W}_{g}f \, : \, f \in \mathcal{H}_{\pi}\big\}} = L^{2}(G).
\end{equation*} 
For compact groups the affirmative answer follows directly from Peter-Weyl theory. Since commutative locally compact groups $G$ only have $\widehat{G}_{s} \neq \emptyset$ whenever they are compact, the conjecture is primarily interesting for non-abelian groups. The following result shows that wavelet completeness is a non-trivial notion.

\begin{proposition}
The reduced Heisenberg groups $\mathbb{H}_{r}^{n}$ are not wavelet complete. 
\end{proposition}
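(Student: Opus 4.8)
The plan is to leverage the completely explicit representation theory of the reduced Heisenberg group. By the Stone--von Neumann theorem, the square integrable representations of $\mathbb{H}_r^n$ are exactly the Schr\"{o}dinger representations $\pi_h$ on $L^2(\mathbb{R}^n)$ indexed by the nonzero integers $h \in \mathbb{Z} \setminus \{0\}$, where $h$ records the central character $t \mapsto e^{2\pi i h t}$ on the central circle. The remaining irreducible representations are one-dimensional and hence fail to be square integrable because $\mathbb{H}_r^n$ is noncompact. Thus the index set $\widehat{G}_s$ is effectively $\mathbb{Z} \setminus \{0\}$, and I only have to understand the wavelet spaces attached to these $\pi_h$.

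The key observation is that every wavelet transform factors through the central variable. Writing a group element as $z = (x, \omega, t)$ with $t$ in the center, we have $\pi_h(z) = e^{2\pi i h t}\, \pi_h(x, \omega, 0)$, so for any admissible $g$ and any $f \in \mathcal{H}_{\pi_h}$,
\[
\mathcal{W}_g f(x, \omega, t) = e^{-2\pi i h t}\, \langle f, \pi_h(x, \omega, 0) g \rangle .
\]
Consequently every element of $\mathcal{W}_g(\mathcal{H}_{\pi_h})$ depends on $t$ only through the single Fourier mode $e^{-2\pi i h t}$; that is, it lies in the closed subspace $\mathcal{M}_h := \{F \in L^2(\mathbb{H}_r^n) : F(x,\omega,t) = e^{-2\pi i h t}\,\Phi(x,\omega)\}$ for some $\Phi \in L^2(\mathbb{R}^{2n})$.

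Next I would invoke that the Haar measure on $\mathbb{H}_r^n$ is the product of Lebesgue measure on $\mathbb{R}^{2n}$ with normalized Haar measure on $\mathbb{T}$, which yields the orthogonal Fourier decomposition $L^2(\mathbb{H}_r^n) = \bigoplus_{h \in \mathbb{Z}} \mathcal{M}_h$. Since each square integrable representation contributes only to sectors with $h \neq 0$, the entire span appearing in the definition of wavelet completeness — and therefore its closure — is contained in $\bigoplus_{h \neq 0} \mathcal{M}_h$. The sector $\mathcal{M}_0$, consisting of the nonzero functions that are independent of the central variable, is a nonzero closed subspace orthogonal to this span, so the closure is a proper subspace of $L^2(\mathbb{H}_r^n)$ and $\mathbb{H}_r^n$ is not wavelet complete.

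The only verifications are that the listed representations exhaust $\widehat{G}_s$ and that $\pi_h$ factors as above; both are standard. The one point deserving care — which I regard as the main (though mild) obstacle — is confirming that no square integrable representation carries the trivial central character, equivalently that the one-dimensional representations are not square integrable; this is immediate from the noncompactness of $\mathbb{H}_r^n$. I do not anticipate a genuine difficulty, since the argument amounts to the clean structural fact that square integrable wavelet transforms occupy only the nonzero central Fourier modes and thus leave the zero mode entirely untouched.
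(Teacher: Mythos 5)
Your proof is correct and follows essentially the same route as the paper: both arguments use the Stone--von Neumann classification of $\widehat{(\mathbb{H}_r^n)}_s$ by nonzero integer central characters and then observe that every wavelet transform $\mathcal{W}_g f$ carries the central Fourier mode $e^{-2\pi i m \tau}$ with $m \neq 0$, so that functions independent of the central variable are orthogonal to the whole span. The paper simply performs the orthogonality check as an explicit integral (the vanishing of $\int_0^1 e^{2\pi i m \tau}\, d\tau$), which is the computational content of your decomposition $L^2(\mathbb{H}_r^n) = \bigoplus_{h \in \mathbb{Z}} \mathcal{M}_h$ with the zero mode $\mathcal{M}_0$ untouched.
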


The structure of the paper is as follows: In Section \ref{sec: Preliminaries} we review the nessesary material regarding square integrable representations and reproducing kernel Hilbert spaces. The examination of wavelet spaces starts in Section \ref{sec: Basic_Properties_of_the_Wavelet_Spaces} where we discuss basic properties. In Section \ref{sec: Disjointedness_of_Wavelet_Spaces} we show the disjointedness of the wavelet spaces and the resulting convexity consequence by utilizing abstract notions from the theory of functions of positive type. The interpolation problem for the wavelet spaces will be taken up in Section \ref{sec: Interpolation_Problem}. We present the connection between the interpolation problem and the HRT-Conjecture in Section \ref{sec: Relationship_With_the_HRT-Conjecture}. Finally, we examine wavelet completeness in Section \ref{sec: Wavelet_Completeness}. The author would like to thank Are Austad, Stine M. Berge, Franz Luef, Eirik Skrettingland, and Jordy Timo van Velthoven for valuable input.

\section{Preliminaries}
\label{sec: Preliminaries}

We will begin by reviewing the two settings of interest, namely square integrable representations of locally compact groups and reproducing kernel Hilbert spaces. This is done to fix notation and terminology, as well as to make the rest of the paper accessible to a broader audience. Background information for both topics can be found respectively in the books \cite{folland2016course, daubechies1992ten, dahlke2015harmonic, deitmar2014principles} and \cite{paulsen2016introduction, berlinet2011reproducing}.

\subsection{Square Integrable Representations}
\label{sec: Square_Integrable_Representations}

Let $G$ be a locally compact group, that is, a Hausdorff topological space that is also a group such that the multiplication map $(x,y) \mapsto xy$ and inversion map $x \mapsto x^{-1}$ are both continuous. The most important result when it comes to locally compact groups is the existence of a unique left-invariant Radon measure $\mu_{L}$ on $G$ called the (left) \textit{Haar measure} on $G$. Whenever there is any measure-theoretic construction on $G$ mentioned, it will always be with respect to the left Haar measure. In particular, the integrability spaces $L^{p}(G)$ for $1 \leq p \leq \infty$ consist of measurable functions $f:G \to \mathbb{C}$ such that \[\|f\|_{L^{p}(G)} := \left(\int_{G} |f(x)|^{p} \, d\mu_{L}(x)\right)^{\frac{1}{p}} < \infty.\]
Moreover, given $f,g \in L^{1}(G)$ the \textit{convolution} between $f$ and $g$ is given by \[(f *_G g)(x) := \int_{G}f(y)g(y^{-1}x) \, d\mu_{L}(y), \qquad x \in G.\]
We mention that the convolution product on $L^{1}(G)$ is commutative if and only if the group $G$ is abelian. \par
Analogously to the left Haar measure, there exists a right Haar measure $\mu_{R}$ on $G$ that is right-invariant. How much the two measures $\mu_{L}$ and $\mu_{R}$ deviate is captured in the \textit{modular function} $\Delta$ on $G$. Its precise definition \cite[Section 2.4]{folland2016course} need not concern us. However, it is worth knowing that $\mu_{L} = \mu_{R}$ precisely when $\Delta$ is identically one. In this case, we write $\mu:= \mu_{L} = \mu_{R}$ and say that $G$ is \textit{unimodular}. Unimodular groups are abundant as they include abelian groups, compact groups, and discrete groups. 

\begin{definition}
Let $\mathcal{U}(\mathcal{H}_{\pi})$ denote the unitary operators on the Hilbert space $\mathcal{H}_{\pi}$. A group homomorphism $\pi:G \to \mathcal{U}(\mathcal{H}_{\pi})$ of a locally compact group $G$ is said to be a \textit{unitary representation} if the function
\begin{equation*}
    \mathcal{W}_{g}f(x) := \langle f, \pi(x)g \rangle_{\mathcal{H}_{\pi}}
\end{equation*}
is continuous on $G$ for any fixed $f,g \in \mathcal{H}_{\pi}$. We refer to $\mathcal{W}_{g}f$ as the \textit{wavelet transform} of $f$ with respect to $g$.
\end{definition}

The terminology for the wavelet transform is motivated by the classical continuous wavelet transform in wavelet analysis, see e.g. \cite{daubechies1992ten}. It is clear that $\mathcal{W}_{g}f$ is a bounded function on $G$ since \[|\mathcal{W}_{g}f(x)| \leq \|f\|_{\mathcal{H}_{\pi}}\|\pi(x)g\|_{\mathcal{H}_{\pi}} = \|f\|_{\mathcal{H}_{\pi}}\|g\|_{\mathcal{H}_{\pi}}, \quad x \in G, \, f,g \in \mathcal{H}_{\pi}.\] We will often fix $g \in \mathcal{H}_{\pi}$ and consider the map $\mathcal{W}_{g}:\mathcal{H}_{\pi} \to C_{b}(G)$ given by $\mathcal{W}_{g}(f) := \mathcal{W}_{g}f$, where $C_{b}(G)$ denotes the continuous and bounded functions on $G$. The spaces of primary interest for us will be $\mathcal{W}_{g}(\mathcal{H}_{\pi})$ as $g$ varies. However, as it stands now the conditions are to loose to deduce nice properties of the spaces $\mathcal{W}_{g}(\mathcal{H}_{\pi})$. Firstly, we will require that the representation $\pi$ is \textit{irreducible}, that is, there does not exist any non-trivial closed subspaces $\mathcal{M} \subset \mathcal{H}_{\pi}$ such that $\pi(x)\eta \in \mathcal{M}$ for every $x \in G$ and $\eta \in \mathcal{M}$. The main tool when working with irreducible representations is Schur's lemma \cite[Chapter 3]{folland2016course}:

\begin{lemma}
\label{Schur's_lemma}
Let $\pi:G \to \mathcal{U}(\mathcal{H}_{\pi})$ be a unitary representation of a locally compact group $G$. Then $\pi$ is irreducible if and only if every bounded linear operator $T:\mathcal{H}_{\pi} \to \mathcal{H}_{\pi}$ satisfying $T \circ \pi(x) = \pi(x) \circ T$ for all $x \in G$ is in fact a constant multiple of the identity transform $Id_{\mathcal{H}_{\pi}}$.
\end{lemma}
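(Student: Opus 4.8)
The plan is to prove both directions of the biconditional, the non-trivial content lying in the forward direction. For the easy direction, suppose $\pi$ is \emph{not} irreducible, so there is a non-trivial closed invariant subspace $\mathcal{M} \subset \mathcal{H}_{\pi}$. Then I would consider the orthogonal projection $P:\mathcal{H}_{\pi} \to \mathcal{H}_{\pi}$ onto $\mathcal{M}$. Because $\mathcal{M}$ is invariant and each $\pi(x)$ is unitary, the orthogonal complement $\mathcal{M}^{\perp}$ is invariant as well, and a short computation shows $P \circ \pi(x) = \pi(x) \circ P$ for every $x \in G$. Since $\mathcal{M}$ is neither $\{0\}$ nor all of $\mathcal{H}_{\pi}$, the projection $P$ is not a scalar multiple of $Id_{\mathcal{H}_{\pi}}$, so the intertwining condition fails to force scalars. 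Contrapositively, if every intertwining operator is scalar then $\pi$ must be irreducible.

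For the forward direction, assume $\pi$ is irreducible and let $T:\mathcal{H}_{\pi} \to \mathcal{H}_{\pi}$ be a bounded operator commuting with every $\pi(x)$. The first reduction is to pass to a self-adjoint operator: taking adjoints in $T\pi(x) = \pi(x)T$ and using $\pi(x)^{*} = \pi(x^{-1})$ (unitarity) shows that $T^{*}$ also commutes with the representation. Hence the self-adjoint operators $A = \tfrac{1}{2}(T + T^{*})$ and $B = \tfrac{1}{2i}(T - T^{*})$ both commute with $\pi$, and since $T = A + iB$ it suffices to prove that every self-adjoint intertwiner is a scalar. So I would assume $T$ is self-adjoint from here on.

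The heart of the argument is to show a self-adjoint intertwiner $T$ is scalar, and this is the step I expect to be the main obstacle since it invokes spectral theory rather than an elementary manipulation. The idea is that $T$ commutes with every bounded Borel function of itself, in particular with all its spectral projections $E_{\Omega}$ for Borel sets $\Omega \subset \mathbb{R}$; by the spectral theorem each such $E_{\Omega}$ also commutes with every $\pi(x)$, so its range is a closed $\pi$-invariant subspace. Irreducibility then forces each $E_{\Omega} \in \{0, Id_{\mathcal{H}_{\pi}}\}$. A spectral measure taking only the values $0$ and the identity must be concentrated at a single point $\lambda_{0}$ of the spectrum, which gives $T = \lambda_{0} \, Id_{\mathcal{H}_{\pi}}$. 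Combining this with the reduction from the previous paragraph, a general intertwiner $T = A + iB$ becomes $\lambda_{A} \, Id_{\mathcal{H}_{\pi}} + i \lambda_{B} \, Id_{\mathcal{H}_{\pi}}$, a constant multiple of the identity as claimed.

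If one prefers to avoid the full spectral theorem, an alternative for the forward direction is available when $\mathcal{H}_{\pi}$ is separable or one is willing to work with the commutant as a von Neumann algebra: the commutant of an irreducible representation is a weakly closed $*$-subalgebra of $\mathcal{B}(\mathcal{H}_{\pi})$ containing no non-trivial projections, hence it is a division algebra over $\mathbb{C}$ consisting of normal operators, which must be $\mathbb{C} \cdot Id_{\mathcal{H}_{\pi}}$. Either route yields the same conclusion, and I would present the spectral-projection version as the cleaner exposition for a reader with standard functional-analytic background.
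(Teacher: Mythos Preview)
Your proof is correct and follows the standard argument. However, the paper does not actually prove this lemma: it is quoted as a classical result with a reference to \cite[Chapter 3]{folland2016course}, and no proof is supplied. So there is nothing in the paper to compare your argument against beyond noting that your spectral-projection approach is essentially the textbook proof one finds in Folland.
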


Bounded linear operators $T:\mathcal{H}_{\pi} \to \mathcal{H}_{\pi}$ satisfying $T \circ \pi(x) = \pi(x) \circ T$ for all $x \in G$ are called \textit{intertwining operators}. The second requirement we need on $\pi$ is one of integrability.

\begin{definition}
Let $\pi:G \to \mathcal{U}(\mathcal{H}_{\pi})$ be an irreducible unitary representation of a locally compact group $G$. We say that a non-zero vector $g \in \mathcal{H}_{\pi}$ is \textit{square integrable} if $\mathcal{W}_{g}g \in L^{2}(G)$. Similarly, we say that $\pi$ is \textit{square integrable} if there exists a square integrable vector in $\mathcal{H}_{\pi}$. 
\end{definition}

If $g \in \mathcal{H}_{\pi}$ is square integrable, then it actually follows that $\mathcal{W}_{g}f \in L^{2}(G)$ for all $f \in \mathcal{H}_{\pi}$. Moreover, the irreducibility of $\pi$ implies with little effort that the map $\mathcal{W}_{g}:\mathcal{H}_{\pi} \to C_{b}(G)$ is one-to-one. An improvement of these remarks is the following result of M. Duflo and C. C. Moore \cite{duflo1976regular} showing that the map $f \mapsto \mathcal{W}_{g}f$ is essentially an isometry.

\begin{proposition}
\label{Duflo_Moore_theorem}
Let $\pi:G \to \mathcal{U}(\mathcal{H}_{\pi})$ be a square integrable representation. There exists a unique positive, densely defined operator $C_{\pi}:\textrm{dom}(C_{\pi}) \subset \mathcal{H}_{\pi} \to \mathcal{H}_{\pi}$ with a densely defined inverse such that 
\begin{itemize}
    \item A non-zero element $g \in \mathcal{H}_{\pi}$ is square integrable if and only if $g \in \textrm{dom}(C_{\pi})$.
    \item For $g_1, g_2 \in \textrm{dom}(C_{\pi})$ and $f_1,f_2 \in \mathcal{H}_{\pi}$ we have the orthogonality relation 
    \begin{equation}
    \label{wavelet_transform_orthogonality}
        \langle \mathcal{W}_{g_1}f_1, \mathcal{W}_{g_2} f_2 \rangle_{L^{2}(G)} = \langle f_1, f_2 \rangle_{\mathcal{H}_{\pi}} \overline{\langle C_{\pi}g_1, C_{\pi}g_2 \rangle}_{\mathcal{H}_{\pi}}.
    \end{equation}
    \item The operator $C_{\pi}$ is injective and satisfies the invariance relation
    \begin{equation*}
        \pi(x)C_{\pi} = \sqrt{\Delta(x)} C_{\pi} \pi(x),
    \end{equation*}
    for all $x \in G$ where $\Delta$ denotes the modular function on $G$.
\end{itemize}
The operator $C_{\pi}$ is called the \textit{Duflo-Moore operator}.
\end{proposition}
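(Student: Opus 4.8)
The plan is to build the operator $C_{\pi}$ out of the sesquilinear forms produced by Schur's lemma, and to recognize the quadratic form $g \mapsto \|\mathcal{W}_{g}f\|_{L^{2}(G)}^{2}$ (for fixed $f$) as the form $g \mapsto \|C_{\pi}g\|_{\mathcal{H}_{\pi}}^{2}$ of a positive self-adjoint operator. First I would establish that the set $\mathcal{D}$ of square integrable (admissible) vectors is a dense linear subspace on which each $\mathcal{W}_{g}$ is a bounded map into $L^{2}(G)$. The key observation is the covariance identity $\mathcal{W}_{g}(\pi(y)f) = L_{y}(\mathcal{W}_{g}f)$, where $L_{y}$ denotes left translation, which is unitary on $L^{2}(G)$ because the Haar measure is left invariant. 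Hence the subspace $\{f \in \mathcal{H}_{\pi} : \mathcal{W}_{g}f \in L^{2}(G)\}$ is $\pi$-invariant; if $g$ is admissible it contains $g \neq 0$, so its closure is a non-zero closed invariant subspace and therefore equals $\mathcal{H}_{\pi}$ by irreducibility. A closed graph argument then promotes this to boundedness of $\mathcal{W}_{g} : \mathcal{H}_{\pi} \to L^{2}(G)$, and the same reasoning applied to the span of the $\pi$-orbit of a single admissible vector shows that $\mathcal{D}$ is dense.

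Next comes the orthogonality relation. For fixed admissible $g_{1}, g_{2}$ I would define a bounded linear operator $A$ on $\mathcal{H}_{\pi}$ by $\langle A f_{1}, f_{2}\rangle_{\mathcal{H}_{\pi}} = \langle \mathcal{W}_{g_{1}} f_{1}, \mathcal{W}_{g_{2}} f_{2}\rangle_{L^{2}(G)}$, which is well defined by the boundedness just obtained. Using the covariance identity and unitarity of $L_{y}$ one checks that $A$ commutes with every $\pi(y)$, so Schur's lemma (Lemma~\ref{Schur's_lemma}) forces $A = c(g_{1},g_{2})\, Id_{\mathcal{H}_{\pi}}$ for a scalar $c(g_{1},g_{2})$. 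Thus $\langle \mathcal{W}_{g_{1}} f_{1}, \mathcal{W}_{g_{2}} f_{2}\rangle = c(g_{1},g_{2})\langle f_{1}, f_{2}\rangle$; in particular $c$ is independent of $f_{1},f_{2}$, is conjugate-linear in $g_{1}$ and linear in $g_{2}$, and $c(g,g) = \|\mathcal{W}_{g}f\|^{2}/\|f\|^{2} > 0$ for $g \neq 0$, the positivity and injectivity again coming from irreducibility.

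It remains to realize $c$ by an operator. I would fix a unit vector $f_{0}$ and consider the positive quadratic form $q(g) := c(g,g) = \|\mathcal{W}_{g}f_{0}\|_{L^{2}(G)}^{2}$ with form domain $\mathcal{D}$. Closedness of $q$ follows from lower semicontinuity: if $g_{n} \to g$ in $\mathcal{H}_{\pi}$ then $\mathcal{W}_{g_{n}}f_{0} \to \mathcal{W}_{g}f_{0}$ pointwise, so Fatou's lemma gives $q(g) \leq \liminf q(g_{n})$. Kato's first representation theorem then yields a unique positive self-adjoint operator $C_{\pi}$ with $\mathrm{dom}(C_{\pi}) = \mathcal{D}$ and $\|C_{\pi}g\|^{2} = q(g)$, whence the stated orthogonality relation with $\overline{\langle C_{\pi}g_{1},C_{\pi}g_{2}\rangle}$ after polarization. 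Injectivity is immediate, since $C_{\pi}g = 0$ forces $\mathcal{W}_{g}f_{0} = 0$ and hence $g = 0$; being injective and self-adjoint, $C_{\pi}$ has dense range, so $C_{\pi}^{-1}$ is densely defined. For the invariance relation I would compute $q(\pi(x)g) = \|R_{x}\mathcal{W}_{g}f_{0}\|^{2} = \Delta(x)^{-1}q(g)$, where $R_{x}$ is right translation and the factor $\Delta(x)^{-1}$ records the effect of right translation on the left Haar measure; polarizing and comparing gives $\pi(x)^{-1} C_{\pi}^{2} \pi(x) = \Delta(x)^{-1} C_{\pi}^{2}$, and uniqueness of the positive square root upgrades this to $\pi(x) C_{\pi} = \sqrt{\Delta(x)}\, C_{\pi} \pi(x)$. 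Uniqueness of $C_{\pi}$ itself follows because any operator satisfying the orthogonality relation reproduces the same closed form $q$, which determines its operator.

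The main obstacle I anticipate is the functional-analytic passage from the scalar $c(g,g)$ to the unbounded operator $C_{\pi}$: one must verify that the form is closed on \emph{exactly} the admissibility domain $\mathcal{D}$, and not merely contained in it, and invoke the representation theorem correctly rather than settling for a bounded form. Establishing boundedness of $\mathcal{W}_{g}$ via the closed graph theorem and identifying $\mathcal{D}$ with the form domain are the delicate points; the remaining computations, including the modular factor in the invariance relation, are routine once the measure-theoretic translation identities are in hand.
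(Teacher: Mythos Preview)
The paper does not supply its own proof of this proposition; it is quoted in the preliminaries as a classical result of Duflo and Moore, with a citation to their 1976 paper, and no argument is given. So there is nothing in the paper to compare your proposal against.

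That said, your sketch is the standard modern route to the theorem and is essentially correct. The three moves --- (i) using irreducibility and the covariance $\mathcal{W}_{g}(\pi(y)f) = L_{y}\mathcal{W}_{g}f$ together with a closed-graph argument to get boundedness of $\mathcal{W}_{g}$ for admissible $g$; (ii) applying Schur's lemma to the intertwiner $\mathcal{W}_{g_{2}}^{*}\mathcal{W}_{g_{1}}$ to extract the scalar $c(g_{1},g_{2})$; (iii) representing the closed positive form $g \mapsto c(g,g)$ by a positive self-adjoint operator --- are exactly the ingredients in the original Duflo--Moore argument and in later expositions. One small terminological point worth tightening: Kato's first representation theorem produces a self-adjoint $A$ whose \emph{form domain} equals $\mathcal{D}$, that is $\mathrm{dom}(A^{1/2}) = \mathcal{D}$ and $q(g) = \|A^{1/2}g\|^{2}$; the Duflo--Moore operator is then $C_{\pi} := A^{1/2}$, whose operator domain is indeed $\mathcal{D}$, so your conclusion is right but the invocation of the representation theorem should be phrased through the square root. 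Your computation of the modular factor via $\mathcal{W}_{\pi(x)g}f_{0} = R_{x}\mathcal{W}_{g}f_{0}$ and $\|R_{x}F\|^{2} = \Delta(x)^{-1}\|F\|^{2}$ is correct, and your closing paragraph already pinpoints the genuinely delicate step, namely that the Fatou argument shows the form is closed on exactly $\mathcal{D}$ and not a larger domain.
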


We can always normalize a square integrable vector $g \in \mathcal{H}_{\pi}$ such that $\|C_{\pi}g\|_{\mathcal{H}_{\pi}} = 1$. A square integrable vector $g \in \mathcal{H}_{\pi}$ satisfying $\|C_{\pi}g\|_{\mathcal{H}_{\pi}} = 1$ is said to be \textit{admissible}. This condition is mainly one of convenience, and we will primarily work with admissible vectors. When $G$ is a unimodular group, then any square integrable representation $\pi$ of $G$ satisfies $\textrm{dom}(C_{\pi}) = \mathcal{H}_{\pi}$ and $C_{\pi} = c_{\pi} \cdot Id_{\mathcal{H}_{\pi}}$ for some $c_{\pi} > 0$. In this case, any non-zero vector $g \in \mathcal{H}_{\pi}$ is square integrable and admissibility simply reads $\|g\|_{\mathcal{H}_{\pi}} = c_{\pi}^{-1}$.

\subsection{Reproducing Kernel Hilbert Spaces}
\label{sec: Reproducing_Kernel_Hilbert_Spaces}

A Hilbert space $\mathcal{H}$ consisting of functions $f:X \to \mathbb{C}$ on a set $X$ does not need to relate pointwise notions with the abstract Hilbert space structure. For instance, convergence of a sequence $f_n \to f$ in the norm on $\mathcal{H}$ does not need to imply pointwise convergence $f_{n}(x) \to f(x)$ for every $x \in X$. However, by imposing that the natural evaluation functionals $E_{x}(f) := f(x)$ for $f \in \mathcal{H}$ and fixed $x \in X$ are bounded one obtains a strong relation between pointwise notions and the Hilbert space structure. 

\begin{definition}
A \textit{reproducing kernel Hilbert space} is a Hilbert space $\mathcal{H}$ consisting of functions $f:X \to \mathbb{C}$ on a set $X$ such that, for each $x \in X$, the evaluation functionals \[E_{x}(f) := f(x), \qquad f \in \mathcal{H},\] are bounded. If the collection $\{E_{x}\}_{x \in X}$ is uniformly bounded in norm we refer to $\mathcal{H}$ as \textit{uniform}.
\end{definition}

Examples of well-known reproducing kernel Hilbert spaces are the Paley-Wiener spaces $PW_{[-A,A]}$ for $A > 0$ and the Hardy space $H^{2}(\mathbb{D})$. We refer the reader to \cite{paulsen2016introduction} for a detailed discussion of these examples, while \cite{berlinet2011reproducing} gives examples of reproducing kernel Hilbert spaces related to stochastic processes. \par
There exists for each $x \in X$ a function $k_{x} \in \mathcal{H}$ such that $E_{x}(f) = \langle f, k_x \rangle_{\mathcal{H}}$ for all $f \in \mathcal{H}$. We refer to $k_{x}$ as the \textit{point kernel} corresponding to $x \in X$. The function $K:X \times X \to \mathbb{C}$ given by \[K(x,y) := \langle k_y, k_x \rangle_{\mathcal{H}} = k_{y}(x)\] is called the \textit{reproducing kernel} of $\mathcal{H}$. If $f_n \to f$ in the norm on $\mathcal{H}$, then 
\begin{equation}
\label{pointwise_convergence}
    |f_{n}(x) - f(x)| = |\langle f_n - f, k_{x}\rangle | \leq \|f_n - f\|_{\mathcal{H}}\|k_x\|_{\mathcal{H}} = \|f_n - f\|_{\mathcal{H}}\|E_x\|_{\mathcal{H}^{*}} \to 0.
\end{equation}
There are two general properties of reproducing kernel Hilbert spaces we will need in the sequel:
\begin{itemize}
    \item \cite[Proposition 2.13]{paulsen2016introduction} The reproducing kernel $K$ of a reproducing kernel Hilbert space is a \textit{kernel function}: Given any finite set of points $\Omega := \{x_1, \dots, x_m\} \subset X$, the matrix 
    \begin{equation}
    \label{omega_matrices}
        K_{\Omega} := \{K(x_{i},x_{j})\}_{i,j = 1}^{m}
    \end{equation} is positive semi-definite, that is, the eigenvalues of $K_{\Omega}$ are all non-negative.
    \item \cite[Proposition 2.3 and Theorem 2.4]{paulsen2016introduction} The reproducing kernel uniquely determines the resulting reproducing kernel Hilbert space: If $\mathcal{H}_{1}$ and $\mathcal{H}_2$ are both reproducing kernel Hilbert spaces on a set $X$ with the same reproducing kernel $K$, then $\mathcal{H}_1 = \mathcal{H}_2$ and $\|\cdot\|_{\mathcal{H}_{1}} = \|\cdot\|_{\mathcal{H}_{2}}.$ Conversely, if two reproducing kernel Hilbert spaces $\mathcal{H}_1$ and $\mathcal{H}_2$ coincide with equal norms, then the reproducing kernels for the spaces $\mathcal{H}_1$ and $\mathcal{H}_2$ are equal.
\end{itemize}

\begin{remark}
The reader should be aware that there is little consensus in the literature regarding the terminology \textit{positive definite}: Some authors, e.g. \cite{paulsen2016introduction}, use the term positive definite for the case $K_{\Omega} \geq 0$, while the majority will use the term positive definite to indicate that $K_{\Omega} > 0$. Hence we adopt the terminology \textit{positive semi-definite} for $K_{\Omega} \geq 0$ and \textit{strictly positive definite} for $K_{\Omega} > 0$ to minimize the possibility for any confusion. 
\end{remark}

It is important to note that the matrices $K_{\Omega}$ in \eqref{omega_matrices} do not need to be invertible. If all the matrices $K_{\Omega}$ are strictly positive definite, then we refer to the reproducing kernel Hilbert space $\mathcal{H}$ as \textit{fully interpolating}. The reason for this terminology will be clear in Section \ref{sec: Interpolation_Problem}.

\section{Basic Properties of Wavelet Spaces}
\label{sec: Basic_Properties_of_the_Wavelet_Spaces}

In this section we will define wavelet spaces and give their basic properties. This will connect the two topics reviewed in Section \ref{sec: Preliminaries} as the wavelet spaces have a natural reproducing kernel Hilbert space structure.

\begin{definition}
Let $\pi:G \to \mathcal{U}(\mathcal{H}_{\pi})$ be a square integrable representation of a locally compact group $G$ and fix an admissible vector $g \in \mathcal{H}_{\pi}$. The space \[\mathcal{W}_{g}(\mathcal{H}_{\pi}) \subset L^{2}(G)\] is called the \textit{(generalized) wavelet space} corresponding to the representation $\pi$ and the admissible vector $g$.
\end{definition}

The terminology is again motivated by the continuous wavelet transform in classical wavelet analysis. Notice that the wavelet space $\mathcal{W}_{g}(\mathcal{H}_{\pi})$ is a Hilbert space since it is a closed subspace of $L^{2}(G)$. Moreover, the norm $\mathcal{W}_{g}(\mathcal{H}_{\pi})$ inherits from $L^{2}(G)$ can be written by using \eqref{wavelet_transform_orthogonality} as \[\|\mathcal{W}_{g}f\|_{L^{2}(G)} = \|f\|_{\mathcal{H}_{\pi}}, \qquad f \in \mathcal{H}_{\pi}.\] \par 
An important property of the wavelet transform is that $\mathcal{W}_{g}$ is a unitary intertwining operator between $\pi$ and the left-regular representation on the space $\mathcal{W}_{g}(\mathcal{H}_{\pi})$: Let $L_{x}$ denote the left translation on functions $F \in \mathcal{W}_{g}(\mathcal{H}_{\pi})$ by $x \in G$, that is, $L_{x}F(y) = F(x^{-1}y)$ for $y \in G$. Then
\begin{equation*}
    \mathcal{W}_{g}(\pi(y)f)(x) = \langle \pi(y)f, \pi(x)g \rangle = \langle f, \pi(y^{-1})\pi(x)g \rangle = \mathcal{W}_{g}(f)(y^{-1}x) = L_{y}\mathcal{W}_{g}(f)(x),
\end{equation*}
for $x,y \in G$ and $f \in \mathcal{H}_{\pi}$. This shows that the wavelet spaces are left-invariant subspaces of $L^{2}(G)$. 

\begin{example}
Consider the \textit{reduced Heisenberg group} $\mathbb{H}_{r}^{n} := \mathbb{R}^n \times \mathbb{R}^n \times \mathbb{T}$ with the product \[\Big(x,\omega,e^{2\pi i \tau}\Big) \cdot \left(x',\omega',e^{2\pi i \tau'}\right) := \left(x + x', \omega + \omega', e^{2\pi i (\tau + \tau')}e^{\pi i (x' \cdot \omega - x \cdot \omega')}\right),\] for $x,x',\omega,\omega' \in \mathbb{R}^n$ and $\tau,\tau' \in \mathbb{R}$. The group $\mathbb{H}_{r}^{n}$ is non-abelian and unimodular with Haar measure equal to the usual product measure on $\mathbb{R}^n \times \mathbb{R}^n \times \mathbb{T}$. The \textit{Schr\"{o}dinger representation} $\rho_{r}:\mathbb{H}_{r}^{n} \to \mathcal{U}(L^{2}(\mathbb{R}^n))$ is the irreducible unitary representation given by 
\begin{equation}
\label{Schrodinger_representation}
\rho_{r}\Big(x,\omega,e^{2\pi i \tau}\Big) := e^{2\pi i \tau}e^{\pi i x \cdot \omega}T_{x}M_{\omega}, \qquad \Big(x,\omega,e^{2\pi i \tau}\Big) \in \mathbb{H}_{r}^{n},    
\end{equation}
where $T_{x}$ and $M_{\omega}$ are the \textit{time-shift} and \textit{frequency-shift} operators on $L^{2}(\mathbb{R}^{n})$ given by
\begin{equation*}
    T_{x}f(y) := f(y-x), \qquad M_{\omega}f(y) := e^{2 \pi i y \cdot \omega}f(y), \quad x,\omega \in \mathbb{R}^{n}.
\end{equation*}
A straightforward computation shows that the $n$-dimensional Gaussian function $g_{n}(x) := e^{-\frac{\pi}{2} x^2}$ for $x \in \mathbb{R}^{n}$ is square integrable for the Schr\"{o}dinger representation. Hence the Duflo-Moore operator satisfies $C_{\pi} = c_{\pi} \cdot Id_{L^{2}(\mathbb{R}^{n})}$ for some $c_{\pi} > 0$ since $\mathbb{H}_{r}^{n}$ is unimodular. In fact, we have $c_{\pi} = 1$ due to \cite[Theorem 3.2.1]{grochenig2001foundations}. Thus any normalized function in $L^{2}(\mathbb{R}^{n})$ is admissible.\par 
It is common in time-frequency analysis to consider the \textit{short-time Fourier transform (STFT)} \[V_{g}f(x,\omega) := \int_{\mathbb{R}^n}f(t)\overline{g(t-x)}e^{-2 \pi i t \cdot \omega} \, dt,\]
for $(x,\omega) \in \mathbb{R}^{2n}$ and $f,g \in L^{2}(\mathbb{R}^{n})$. 
The STFT is related to the wavelet transform of the reduced Heisenberg group by the formula 
\begin{equation}
\label{relation_wavelet_transform_STFT}
    \mathcal{W}_{g}f\Big(x,\omega,e^{2 \pi i \tau}\Big) = e^{-2\pi i \tau}e^{\pi i x \cdot \omega} V_{g}f(x,\omega), \qquad \Big(x,\omega, e^{2 \pi i \tau}\Big) \in \mathbb{H}_{r}^{n}.
\end{equation}
The phase-factor $e^{-2\pi i \tau}e^{\pi i x \cdot \omega}$ in \eqref{relation_wavelet_transform_STFT} is often irrelevant. Hence we will for the most part consider the STFT and the \textit{Gabor spaces} \[V_{g}(L^{2}(\mathbb{R}^{n})) \subset L^{2}(\mathbb{R}^{2n}),\] for $g \in L^{2}(\mathbb{R}^{n})$ with $\|g\|_{L^{2}(\mathbb{R}^{n})} = 1.$
\end{example}

\subsection{Wavelet Spaces as Reproducing Kernel Hilbert Spaces}

The fact that the wavelet spaces have a reproducing kernel Hilbert space structure originally appeared in the influential paper \cite{grossmann1985transforms}. Since then, it has been used in both special cases \cite{abreu2015measures} and in the general setting \cite{romero2020dual}. We provide the statement and brief proof for completeness as our assumptions are slightly different than in \cite{grossmann1985transforms} and include minor additions.

\begin{proposition}
\label{spaces_are_RKHS}
Let $\pi:G \to \mathcal{U}(\mathcal{H}_{\pi})$ be a square integrable representation with admissible vector $g \in \mathcal{H}_{\pi}$. The wavelet space $\mathcal{W}_{g}(\mathcal{H}_{\pi})$ is a uniform reproducing kernel Hilbert space. The point kernel $k_{x}$ corresponding to $x \in G$ is the function $k_{x} = \mathcal{W}_{g}(\pi(x)g)$, while the reproducing kernel $K:G \times G \to \mathbb{C}$ is given by \[K(x,y) = \langle \pi(y)g, \pi(x)g \rangle = \mathcal{W}_{g}(\pi(y)g)(x), \qquad x,y \in G.\] If $f_n \to f$ in the norm on $\mathcal{H}_{\pi}$, then 
\begin{equation}
\label{uniform_convergence_equation}
\mathcal{W}_{g}f_{n}(x) \to \mathcal{W}_{g}f(x)    
\end{equation}
uniformly for all $x \in G$. Moreover, if $h \in \mathcal{H}_{\pi}$ is another admissible vector then $\Psi_{g,h}:\mathcal{W}_{g}(\mathcal{H}_{\pi}) \to \mathcal{W}_{h}(\mathcal{H}_{\pi})$ given by \begin{equation}
\label{map_between_wavelet_spaces}
\Psi_{g,h}\left(\mathcal{W}_{g}f\right) := \mathcal{W}_{h}f, \qquad f \in \mathcal{H}_{\pi},    
\end{equation}
is an isomorphism of Hilbert spaces.
\end{proposition}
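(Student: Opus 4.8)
The plan is to deduce every assertion from the Duflo--Moore orthogonality relation \eqref{wavelet_transform_orthogonality} together with the admissibility normalization $\|C_{\pi}g\|_{\mathcal{H}_{\pi}} = 1$. First I would verify that the stated candidate $k_{x} := \mathcal{W}_{g}(\pi(x)g)$, which lies in $\mathcal{W}_{g}(\mathcal{H}_{\pi})$ since $\pi(x)g \in \mathcal{H}_{\pi}$, reproduces point evaluation. Applying \eqref{wavelet_transform_orthogonality} with $g_1 = g_2 = g$, $f_1 = f$, and $f_2 = \pi(x)g$ yields
\[\langle \mathcal{W}_{g}f, \mathcal{W}_{g}(\pi(x)g)\rangle_{L^{2}(G)} = \langle f, \pi(x)g\rangle_{\mathcal{H}_{\pi}}\,\overline{\langle C_{\pi}g, C_{\pi}g\rangle}_{\mathcal{H}_{\pi}} = \langle f, \pi(x)g\rangle_{\mathcal{H}_{\pi}} = \mathcal{W}_{g}f(x),\]
where the middle equality uses that $\|C_{\pi}g\|_{\mathcal{H}_{\pi}}^{2} = 1$ is real. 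This simultaneously shows that each evaluation functional $E_{x}$ is bounded, so that $\mathcal{W}_{g}(\mathcal{H}_{\pi})$ is a reproducing kernel Hilbert space, and identifies $k_{x}$. The reproducing kernel formula is then immediate from $K(x,y) = \langle k_{y}, k_{x}\rangle = k_{y}(x)$.

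For uniformity I would compute the operator norm $\|E_{x}\|_{\mathcal{H}^{*}} = \|k_{x}\|_{L^{2}(G)} = \|\mathcal{W}_{g}(\pi(x)g)\|_{L^{2}(G)} = \|\pi(x)g\|_{\mathcal{H}_{\pi}} = \|g\|_{\mathcal{H}_{\pi}}$, invoking the isometry $\|\mathcal{W}_{g}(\cdot)\|_{L^{2}(G)} = \|\cdot\|_{\mathcal{H}_{\pi}}$ recorded just before the statement and the unitarity of $\pi(x)$. Since this bound is independent of $x$, the space is uniform. The uniform convergence in \eqref{uniform_convergence_equation} is then a direct application of the general reproducing kernel Hilbert space estimate \eqref{pointwise_convergence}, giving $|\mathcal{W}_{g}f_{n}(x) - \mathcal{W}_{g}f(x)| \leq \|f_{n} - f\|_{\mathcal{H}_{\pi}}\|g\|_{\mathcal{H}_{\pi}}$ with a right-hand side independent of $x$.

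Finally, for $\Psi_{g,h}$ I would check well-definedness, linearity, isometry, and surjectivity in turn. Well-definedness rests on the injectivity of $\mathcal{W}_{g}$ noted in the preliminaries (a consequence of irreducibility): every element of $\mathcal{W}_{g}(\mathcal{H}_{\pi})$ equals $\mathcal{W}_{g}f$ for a unique $f \in \mathcal{H}_{\pi}$, so that $\Psi_{g,h}$ is precisely $\mathcal{W}_{h}\circ \mathcal{W}_{g}^{-1}$. Linearity is inherited from $\mathcal{W}_{g}$ and $\mathcal{W}_{h}$; the isometry property holds because both $\|\mathcal{W}_{g}f\|_{L^{2}(G)}$ and $\|\mathcal{W}_{h}f\|_{L^{2}(G)}$ equal $\|f\|_{\mathcal{H}_{\pi}}$ by admissibility of $g$ and $h$; and surjectivity onto $\mathcal{W}_{h}(\mathcal{H}_{\pi})$ is immediate from the definition. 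A surjective linear isometry is an isomorphism of Hilbert spaces. I do not anticipate a genuine obstacle, as the whole argument is bookkeeping around \eqref{wavelet_transform_orthogonality}; the only point that genuinely requires care is confirming that admissibility forces the Duflo--Moore correction factor to be exactly $1$ when pairing against the candidate kernel, which is what makes $\mathcal{W}_{g}(\pi(x)g)$ reproduce without an extraneous constant.
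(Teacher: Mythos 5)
Your proposal is correct and takes essentially the same approach as the paper: both arguments reduce everything to the Duflo--Moore orthogonality relation \eqref{wavelet_transform_orthogonality}, your direct substitution $g_{1} = g_{2} = g$, $f_{2} = \pi(x)g$ being equivalent to the paper's phrasing of the reproducing property via $F = \mathcal{W}_{g}\left(\mathcal{W}_{g}^{*}F\right)$ and the adjoint. The remaining steps --- uniformity from $\|E_{x}\| = \|k_{x}\| = \|g\|_{\mathcal{H}_{\pi}}$, uniform convergence via \eqref{pointwise_convergence}, and the isometry and surjectivity of $\Psi_{g,h}$ --- match the paper's proof, with your explicit well-definedness check through the injectivity of $\mathcal{W}_{g}$ being a harmless (and slightly more careful) addition.
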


\begin{proof}
For $F \in \mathcal{W}_{g}(\mathcal{H}_{\pi})$ we have that $F(x) = \mathcal{W}_{g}\left(\mathcal{W}_{g}^{*}F\right)(x)$ since $\mathcal{W}_{g}$ is an isometry. Hence \[F(x) = \mathcal{W}_{g}\left(\mathcal{W}_{g}^{*}F\right)(x) = \left\langle \mathcal{W}_{g}^{*}F, \pi(x)g \right\rangle = \left\langle F, \mathcal{W}_{g}\left(\pi(x)g\right) \right\rangle.\] Since $k_{x} := \mathcal{W}_{g}\left(\pi(x)g\right) \in \mathcal{W}_{g}(\mathcal{H}_{\pi})$ the wavelet space $\mathcal{W}_{g}(\mathcal{H}_{\pi})$ is a reproducing kernel Hilbert space. The reproducing kernel $K$ can be written by using the orthogonality relations \eqref{wavelet_transform_orthogonality} as \[K(x,y) = \langle k_y, k_x \rangle = \left\langle \mathcal{W}_{g}\left(\pi(y)g\right), \mathcal{W}_{g}\left(\pi(x)g\right) \right\rangle = \langle \pi(y)g, \pi(x)g \rangle.\] If $E_{x}$ is the evaluation functional at the point $x \in G$ then  \[\|E_{x}\| = \|k_x\| = \|\mathcal{W}_{g}\left(\pi(x)g\right)\| = \|\pi(x)g\| = \|g\|.\]
Thus $\mathcal{W}_{g}(\mathcal{H}_{\pi})$ is uniform since the admissible vector $g \in \mathcal{H}_{\pi}$ is fixed. The computation \eqref{pointwise_convergence} shows that the convergence in \eqref{uniform_convergence_equation} is uniform. The map $\Psi_{g,h}$ is an isometry since \[\|\mathcal{W}_{h}f\|_{\mathcal{W}_{h}(\mathcal{H}_{\pi})} = \|f\|_{\mathcal{H}_{\pi}} = \|\mathcal{W}_{g}f\|_{\mathcal{W}_{g}(\mathcal{H}_{\pi})},\]
for all $f \in \mathcal{H}_{\pi}$. Finally, $\Psi_{g,h}$ is surjective as every element in $\mathcal{W}_{h}(\mathcal{H}_{\pi})$ is on the form $\mathcal{W}_{h}f$ for some $f \in \mathcal{H}_{\pi}$.
\end{proof}

\begin{remark}
The fact that the map $\Psi_{g,h}$ in \eqref{map_between_wavelet_spaces} is an isomorphism shows that the wavelet spaces corresponding to different admissible vectors can not be too different, e.g. their dimensions coincide. However, the wavelet spaces are still different as reproducing kernel Hilbert spaces since the map $\Psi_{g,h}$ does not in general preserve the reproducing kernels. 
\end{remark}

The wavelet transform $\mathcal{W}_{g}:\mathcal{H}_{\pi} \to L^{2}(G)$ is an isometry when $g \in \mathcal{H}_{\pi}$ is an admissible vector. Hence the projection from $L^{2}(G)$ to $\mathcal{W}_{g}(\mathcal{H}_{\pi})$ is given by $\mathcal{W}_{g} \circ \mathcal{W}_{g}^{*}$. A classical result in coorbit theory \cite{feichtinger1989banach1} known as the \textit{reproducing formula} describes this projection in terms of convolutions: The orthogonal projection from $L^{2}(G)$ to $\mathcal{W}_{g}(\mathcal{H}_{\pi})$ is explicitly given by 
\begin{equation*}
    \mathcal{W}_{g} \circ \mathcal{W}_{g}^{*}(F) = F *_{G} k_{e}, \qquad F \in L^{2}(G),
\end{equation*}
where $k_{e}(x) = \mathcal{W}_{g}g(x)$ is the point kernel corresponding to the identity element $e \in G$. The following basic result shows that the wavelet spaces automatically exhibit integrability properties that are not shared by general subspaces of $L^{2}(G)$.

\begin{proposition}
Let $\pi:G \to \mathcal{U}(\mathcal{H}_{\pi})$ be a square integrable representation and fix an admissible vector $g \in \mathcal{H}_{\pi}$. The wavelet space $\mathcal{W}_{g}(\mathcal{H}_{\pi})$ is continuously embedded into $L^{p}(G)$ for all $p \in [2,\infty]$. However, the wavelet space $\mathcal{W}_{g}(\mathcal{H}_{\pi})$ is not in general contianed in $L^{1}(G)$.
\end{proposition}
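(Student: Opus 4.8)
The first assertion follows by interpolating between $L^{2}$ and $L^{\infty}$. The plan is to recall that $\mathcal{W}_{g}(\mathcal{H}_{\pi}) \subset L^{2}(G)$ with $\|\mathcal{W}_{g}f\|_{L^{2}(G)} = \|f\|_{\mathcal{H}_{\pi}}$ by square integrability, while Proposition \ref{spaces_are_RKHS} tells us that the space is a uniform reproducing kernel Hilbert space with $\|E_{x}\| = \|g\|_{\mathcal{H}_{\pi}}$ for every $x \in G$. Hence for $F \in \mathcal{W}_{g}(\mathcal{H}_{\pi})$ we obtain the pointwise bound $|F(x)| = |\langle F, k_{x} \rangle| \leq \|g\|_{\mathcal{H}_{\pi}}\|F\|_{L^{2}(G)}$, that is, $\|F\|_{L^{\infty}(G)} \leq \|g\|_{\mathcal{H}_{\pi}}\|F\|_{L^{2}(G)}$, which settles the case $p = \infty$. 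For $2 < p < \infty$ I would write $|F|^{p} = |F|^{p-2}|F|^{2}$ and estimate $\|F\|_{L^{p}(G)}^{p} \leq \|F\|_{L^{\infty}(G)}^{p-2}\|F\|_{L^{2}(G)}^{2}$, which yields $\|F\|_{L^{p}(G)} \leq \|g\|_{\mathcal{H}_{\pi}}^{1 - 2/p}\|F\|_{L^{2}(G)}$. This gives the continuous embedding for all $p \in [2,\infty]$, with embedding constant $\|g\|_{\mathcal{H}_{\pi}}^{1 - 2/p}$ (reducing to the isometry for $p = 2$).

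For the second assertion it suffices to produce a single counterexample. The plan is to work with the Schr\"{o}dinger representation $\rho_{r}$ of $\mathbb{H}_{r}^{n}$ and to choose a deliberately non-smooth admissible window, namely the indicator $g = \chi_{[0,1]^{n}}$, which satisfies $\|g\|_{L^{2}(\mathbb{R}^{n})} = 1$ and is therefore admissible. The natural element of the wavelet space to test is the point kernel $k_{e} = \mathcal{W}_{g}g$. By \eqref{relation_wavelet_transform_STFT} we have $|\mathcal{W}_{g}g| = |V_{g}g|$ pointwise, since the phase factor has modulus one; because the $\mathbb{T}$-factor of the Haar measure is finite, integrability of $\mathcal{W}_{g}g$ over $\mathbb{H}_{r}^{n}$ reduces to integrability of $V_{g}g$ over $\mathbb{R}^{2n}$.

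I would then compute $V_{g}g(x,\omega)$ directly: for fixed $x$ it is, up to phase, the Fourier transform of the indicator of a cube whose side length in each coordinate is $1 - |x_j|$, so its modulus factors into terms decaying like $|\sin(\pi(1 - |x_j|)\omega_j)|/(\pi|\omega_j|)$. Integrating in $\omega$ and using the divergence of $\int_{0}^{\infty} |\sin u|/u \, du$ shows that $\int_{\mathbb{R}^{2n}}|V_{g}g| = \infty$, hence $\mathcal{W}_{g}g \in \mathcal{W}_{g}(\mathcal{H}_{\pi})$ but $\mathcal{W}_{g}g \notin L^{1}(\mathbb{H}_{r}^{n})$, so $\mathcal{W}_{g}(\mathcal{H}_{\pi}) \not\subset L^{1}(\mathbb{H}_{r}^{n})$.

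The first part is entirely routine. The main obstacle is the second part, where the difficulty is conceptual rather than computational: one must recognize that smooth windows such as the Gaussian produce integrable wavelet transforms, so a genuinely rough window is needed, and then verify the logarithmic divergence stemming from the slow $1/|\omega|$ decay of the Fourier transform of a jump discontinuity.
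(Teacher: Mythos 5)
Your proposal is correct and follows essentially the same route as the paper: the $L^{\infty}$ bound via the uniform bound $\|k_x\| = \|g\|_{\mathcal{H}_{\pi}}$ on the point kernels, then the identical splitting $|F|^{p} = |F|^{p-2}|F|^{2}$ for the intermediate exponents. For the failure of the $L^{1}$ embedding the paper simply cites Gr\"{o}chenig's result that $V_{g}g \in L^{1}(\mathbb{R}^{2n})$ forces $g$ to be continuous, whereas you verify the same counterexample class concretely with $g = \chi_{[0,1]^{n}}$ and the divergence of $\int |\sin u|/u \, du$ --- a self-contained substitute for the citation, and your reduction from $\mathbb{H}_{r}^{n}$ to $\mathbb{R}^{2n}$ via the unimodular phase and the finite $\mathbb{T}$-factor is sound.
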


\begin{proof}
Notice that $\mathcal{W}_{g}(\mathcal{H}_{\pi})$ is continuously embedded in both $L^{2}(G)$ and $L^{\infty}(G)$: The first claim is obvious, while the second follows from the computation \[\|F\|_{L^{\infty}(G)} = \sup_{x \in G}|\langle k_x, F \rangle | \leq \sup_{x \in G} \|k_x\|_{\mathcal{W}_{g}(\mathcal{H}_{\pi})} \|F\|_{\mathcal{W}_{g}(\mathcal{H}_{\pi})} = \|g\|_{\mathcal{H}_{\pi}}\|F\|_{\mathcal{W}_{g}(\mathcal{H}_{\pi})},\]
for $F \in \mathcal{W}_{g}(\mathcal{H}_{\pi}).$
This observation implies that $\mathcal{W}_{g}(\mathcal{H}_{\pi})$ is continuously embedded into the intermediate spaces $L^{p}(G)$ for $p \in (2,\infty)$ as well since
    \[\|F\|_{L^{p}(G)} = \left(\int_{G}|F(x)|^{p-2}|F(x)|^2 \, d\mu_{L}(x)\right)^{\frac{1}{p}} \leq \|F\|_{L^{\infty}(G)}^{\frac{p-2}{p}}\|F\|_{\mathcal{W}_{g}(\mathcal{H}_{\pi})}^{\frac{2}{p}} \leq \|g\|_{\mathcal{H}_{\pi}}^{\frac{p-2}{p}}\|F\|_{\mathcal{W}_{g}(\mathcal{H}_{\pi})}.\]
Counterexamples to the last statement can be found in the time-frequency setting since the STFT satisfies $V_{g}g \in L^{1}(\mathbb{R}^{2n})$ only when $g$ is a continuous function on $\mathbb{R}^{n}$ by \cite[Proposition 12.1.4]{grochenig2001foundations}.
\end{proof}

Throughout the paper, we aim to emphasize how the reproducing kernel Hilbert space structure of the wavelet spaces is paramount. As a first example, we have the following existence result regarding generalized frame systems on reasonably general groups.

\begin{proposition}
\label{completeness_proposition}
Let $\pi:G \to \mathcal{U}(\mathcal{H}_{\pi})$ be a square integrable representation of a second countable locally compact group $G$ and fix an admissible vector $g \in \mathcal{H}_{\pi}$. There exists a countable set $\Lambda \subset G$ such that the generalized frame system $\{\pi(\lambda)g\}_{\lambda \in \Lambda}$ is complete in $\mathcal{H}_{\pi}$.
\end{proposition}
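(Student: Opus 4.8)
The plan is to exploit the continuity of wavelet transforms together with the separability that second countability provides, reducing completeness to the elementary fact that a continuous function vanishing on a dense set vanishes identically. The index set $\Lambda$ will not need to be chosen cleverly; essentially any countable dense subset of $G$ will do.

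First I would record that a second countable topological space is separable: selecting one point from each nonempty member of a countable base yields a countable dense subset $\Lambda \subset G$. This $\Lambda$ is my candidate index set, and I claim the generalized frame system $\{\pi(\lambda)g\}_{\lambda \in \Lambda}$ is already complete in $\mathcal{H}_{\pi}$, i.e. has trivial orthogonal complement. To verify this, I would take an arbitrary $f \in \mathcal{H}_{\pi}$ orthogonal to every $\pi(\lambda)g$ with $\lambda \in \Lambda$ and aim to conclude $f = 0$. Unwinding the definition of the wavelet transform, the hypothesis reads $\mathcal{W}_{g}f(\lambda) = \langle f, \pi(\lambda)g \rangle = 0$ for all $\lambda \in \Lambda$. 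Since $\pi$ is a unitary representation, the function $\mathcal{W}_{g}f$ lies in $C_{b}(G)$ and is in particular continuous on $G$; a continuous function vanishing on the dense set $\Lambda$ must vanish on all of $G$, so $\mathcal{W}_{g}f \equiv 0$. Finally, because $\pi$ is irreducible the map $\mathcal{W}_{g}:\mathcal{H}_{\pi} \to C_{b}(G)$ is injective, as recorded in the preliminaries, so $\mathcal{W}_{g}f \equiv 0$ forces $f = 0$. Hence the system is complete.

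The argument is short, and the only genuine input is the passage from second countability to a countable dense subset; everything else is a direct consequence of the two already-established properties of the wavelet transform, namely its continuity into $C_{b}(G)$ and its injectivity. For this reason I do not expect a serious obstacle. The single point deserving care is conceptual rather than technical: completeness here means totality of the closed linear span, so orthogonality to the entire family is the correct hypothesis, and no quantitative frame bounds are being claimed. It is precisely because the conclusion is qualitative that vanishing on a dense set suffices, and no stronger sampling, covering, or lattice estimate on $\Lambda$ is required.
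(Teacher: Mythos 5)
Your argument is correct, but it takes a genuinely different route from the paper's. You pick $\Lambda$ to be a countable \emph{topologically} dense subset of $G$ (supplied by second countability), observe that orthogonality of $f$ to every $\pi(\lambda)g$ means the continuous function $\mathcal{W}_{g}f$ vanishes on $\Lambda$ and hence everywhere, and finish with the injectivity of $\mathcal{W}_{g}$ recorded in the preliminaries. The paper never uses density in $G$ at all: it passes from second countability to separability of $L^{2}(G)$ (citing the equivalence of the two conditions), hence of the closed subspace $\mathcal{W}_{g}(\mathcal{H}_{\pi})$, and then invokes a lemma from reproducing kernel Hilbert space theory to produce a countable $\Lambda$ whose point kernels $k_{\lambda} = \mathcal{W}_{g}(\pi(\lambda)g)$ are dense in the wavelet space; the conclusion then follows from the orthogonality relations together with the injectivity of the Duflo--Moore operator. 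Your route is more elementary and self-contained, and its hypotheses relax in a different direction: it really only needs $G$ separable (rather than second countable) and $\pi$ irreducible with $g \neq 0$, with no appeal to the orthogonality relations. The paper's Hilbert-space route buys something your argument does not, which the remark following the proposition exploits: it needs only separability of the wavelet space itself, so it still applies when $L^{2}(G)$ --- and indeed $G$ --- fails to be separable, for instance for arbitrary compact groups, where $\mathcal{W}_{g}(\mathcal{H}_{\pi})$ is finite-dimensional. So both proofs are valid, and each generalizes the stated hypothesis along a different axis: yours topologically (separable groups, with $\Lambda$ dense in $G$), the paper's functional-analytically (separable wavelet spaces, with $\Lambda$ a priori carrying no topological density in $G$).
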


\begin{proof}
The second countability of $G$ is by \cite[Theorem 2]{de1978local} equivalent to the requirement that $L^{2}(G)$ is separable. Whence the subspace $\mathcal{W}_{g}(\mathcal{H}_{\pi}) \subset L^{2}(G)$ is also separable. By \cite[Lemma 11]{berlinet2011reproducing} there exists a countable set $\Lambda \subset G$ such that the collection of point kernels $k_{\lambda} = \mathcal{W}_{g}(\pi(\lambda)g)$ for $\lambda \in \Lambda$ is dense in $\mathcal{W}_{g}(\mathcal{H}_{\pi})$. Hence for $f \in \mathcal{H}_{\pi}$ the criterion \[\langle \mathcal{W}_{g}f, \mathcal{W}_{g}(\pi(\lambda)g) \rangle = 0\] for all $\lambda \in \Lambda$ forces $\mathcal{W}_{g}f \equiv 0$. The orthogonality relations  \eqref{wavelet_transform_orthogonality} and the injectivity of the Duflo-Moore operator implies that $\langle f, \pi(\lambda)g \rangle = 0$ for all $\lambda \in \Lambda$ only when $f = 0$. 
\end{proof}

\begin{remark}
The second countability condition in Proposition \ref{completeness_proposition} is only a sufficient requirement. In the proof of Proposition \ref{completeness_proposition} we need that the wavelet spaces $\mathcal{W}_{g}(\mathcal{H}_{\pi})$ are separable. This can happen when the ambient space $L^{2}(G)$ is not separable. In particular, the conclusion of Proposition \ref{completeness_proposition} holds for all square integrable representations corresponding to compact groups since the wavelet spaces are then finite-dimensional by \cite[Theorem 5.2]{folland2016course}.
\end{remark}

\subsection{Tensor Product of Wavelet Spaces}

Our setting involves both square integrable representations of locally compact groups as well as reproducing kernel Hilbert spaces. Both of these categories have a natural notion of a tensor product. We will use reproducing kernel Hilbert space arguments to show that these operations are compatible. Let us first briefly recall the different notions or tensor products involved. \par 
Consider two reproducing kernel Hilbert spaces $\mathcal{H}_i$ of functions on sets $X_i$ with reproducing kernels $K_{i}:X_{i} \times X_{i} \to \mathbb{C}$ for $i = 1,2$. We can form the tensor product $\mathcal{H}_{1} \otimes \mathcal{H}_2$ of Hilbert spaces in the usual way by requiring that \[\langle f_1 \otimes f_2, g_1 \otimes g_2 \rangle_{\mathcal{H}_{1} \otimes \mathcal{H}_2} := \langle f_1, g_1 \rangle_{\mathcal{H}_{1}}\langle f_2, g_2 \rangle_{\mathcal{H}_{2}},\]
where $f_1, g_1 \in \mathcal{H}_{1}$ and $f_2, g_2 \in \mathcal{H}_2$. This extends to an inner-product on $\mathcal{H}_{1} \otimes \mathcal{H}_2$ that is not in general complete. The completion of $\mathcal{H}_{1} \otimes \mathcal{H}_2$ with this inner-product is denoted by $\mathcal{H}_{1} \hat{\otimes} \mathcal{H}_2$ and called the \textit{tensor product} of the Hilbert spaces $\mathcal{H}_1$ and $\mathcal{H}_2$. Not surprisingly, the tensor product $\mathcal{H}_1 \hat{\otimes} \mathcal{H}_2$ can be identified with a reproducing kernel Hilbert space on the set $X_1 \times X_2$ as follows: Any element $u = \sum_{i = 1}^{n}f_i \otimes g_i \in \mathcal{H}_{1} \otimes \mathcal{H}_2$ can be identified with the function on $X_1 \times X_2$ given by $\tilde{u}(x,y) := \sum_{i = 1}^{n}f_i(x)g_i(y)$. This association extends to the completion $\mathcal{H}_{1} \hat{\otimes} \mathcal{H}_2$ and gives a well-defined linear isometry between $\mathcal{H}_{1} \hat{\otimes} \mathcal{H}_2$ and the reproducing kernel Hilbert space on $X_1 \times X_2$ with reproducing kernel \[K((x_1,y_1),(x_2,y_2)) := K_{1}(x_1,x_2)K_{2}(y_1,y_2), \quad x_1,x_2 \in X_1, \, \, y_1,y_2 \in X_2.\] \par
In the setting of unitary representations of locally compact groups we also have a notion of a tensor product. Consider two unitary representations $\pi:G \to \mathcal{U}(\mathcal{H}_{\pi})$ and $\rho:H \to \mathcal{U}(\mathcal{H}_{\rho})$ where $G$ and $H$ are locally compact groups. We can consider the \textit{tensor product representation} $\pi \otimes \rho$ given on elementary tensors $f_1 \otimes f_2$ by \[(\pi \otimes \rho)(x,y)(f_1 \otimes f_2) := \pi(x)f_1 \otimes \rho(y)f_2,\] for $x \in G$, $y \in H$, $f_1 \in \mathcal{H}_{\pi}$, and $f_2 \in \mathcal{H}_{\rho}$. This extends to arbitrary elements in $\mathcal{H}_{\pi} \hat{\otimes} \mathcal{H}_{\rho}$ and hence defines a unitary representation $\pi \otimes \rho:G \times H \to \mathcal{U}(\mathcal{H}_{\pi} \hat{\otimes} \mathcal{H}_{\rho})$. The following result shows that the two tensor product constructions we have described are compatible in a natural way.

\begin{proposition}
\label{tensor_product_result}
Let $\pi:G \to \mathcal{U}(\mathcal{H}_{\pi})$ and $\rho:H \to \mathcal{U}(\mathcal{H}_{\rho})$ be two square integrable representations with admissible vectors $g \in \mathcal{H}_{\pi}$ and $h \in \mathcal{H}_{\rho}$. There is an isomorphism of reproducing kernel Hilbert spaces \[\mathcal{W}_{g \otimes h}(\mathcal{H}_{\pi} \hat{\otimes} \mathcal{H}_{\rho}) \simeq \mathcal{W}_{g}(\mathcal{H}_{\pi}) \hat{\otimes} \mathcal{W}_{h}(\mathcal{H}_{\rho}).\]
\end{proposition}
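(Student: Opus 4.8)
The plan is to reduce everything to the fact that a reproducing kernel Hilbert space is uniquely determined by its reproducing kernel, cited as \cite[Proposition 2.3 and Theorem 2.4]{paulsen2016introduction}. Concretely, I will show that both spaces in the claimed isomorphism are reproducing kernel Hilbert spaces of functions on $G \times H$ sharing the \emph{same} reproducing kernel, namely the product $K_{\pi}(x,x')K_{\rho}(y,y')$, where $K_{\pi}$ and $K_{\rho}$ denote the reproducing kernels of $\mathcal{W}_{g}(\mathcal{H}_{\pi})$ and $\mathcal{W}_{h}(\mathcal{H}_{\rho})$ from Proposition \ref{spaces_are_RKHS}.

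First I would record the factorization of the wavelet transform on elementary tensors. Using the definition of $\pi \otimes \rho$, a direct computation gives, for $f_1 \in \mathcal{H}_{\pi}$, $f_2 \in \mathcal{H}_{\rho}$ and $(x,y) \in G \times H$,
\[\mathcal{W}_{g \otimes h}(f_1 \otimes f_2)(x,y) = \langle f_1 \otimes f_2, \pi(x)g \otimes \rho(y)h \rangle = \mathcal{W}_{g}f_1(x)\,\mathcal{W}_{h}f_2(y).\]
This is exactly the identification $u = \sum_i f_i \otimes \phi_i \mapsto \tilde{u}(x,y) = \sum_i \mathcal{W}_{g}f_i(x)\mathcal{W}_{h}\phi_i(y)$ that realizes $\mathcal{W}_{g}(\mathcal{H}_{\pi}) \hat{\otimes} \mathcal{W}_{h}(\mathcal{H}_{\rho})$ as the reproducing kernel Hilbert space on $G \times H$ with kernel $K_{\pi}(x,x')K_{\rho}(y,y')$, as recalled directly above the statement.

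Before the left-hand side even makes sense I must check that $\pi \otimes \rho$ is a square integrable representation of $G \times H$ for which $g \otimes h$ is admissible, so that Proposition \ref{spaces_are_RKHS} applies. Irreducibility of the external tensor product of two irreducible unitary representations is classical, and I would cite it. Square integrability then follows from the factorization, since $\|\mathcal{W}_{g \otimes h}(g \otimes h)\|_{L^{2}(G \times H)} = \|\mathcal{W}_{g}g\|_{L^{2}(G)}\|\mathcal{W}_{h}h\|_{L^{2}(H)} < \infty$. To pin down admissibility I would identify the Duflo--Moore operator as $C_{\pi \otimes \rho} = C_{\pi} \otimes C_{\rho}$: inserting the factorization into the $L^{2}(G \times H)$ inner product and applying the orthogonality relations \eqref{wavelet_transform_orthogonality} for $\pi$ and $\rho$ separately shows that $C_{\pi} \otimes C_{\rho}$ satisfies the orthogonality relations characterizing $C_{\pi \otimes \rho}$ in Proposition \ref{Duflo_Moore_theorem}. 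The uniqueness in that proposition then forces the identification, whence $\|C_{\pi \otimes \rho}(g \otimes h)\| = \|C_{\pi}g\|\|C_{\rho}h\| = 1$, i.e. $g \otimes h$ is admissible.

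With admissibility in hand, Proposition \ref{spaces_are_RKHS} shows that $\mathcal{W}_{g \otimes h}(\mathcal{H}_{\pi} \hat{\otimes} \mathcal{H}_{\rho})$ is a reproducing kernel Hilbert space whose kernel is
\[\langle (\pi \otimes \rho)(x',y')(g \otimes h), (\pi \otimes \rho)(x,y)(g \otimes h) \rangle = \langle \pi(x')g, \pi(x)g \rangle \langle \rho(y')h, \rho(y)h \rangle = K_{\pi}(x,x')K_{\rho}(y,y').\]
Since this coincides with the kernel of $\mathcal{W}_{g}(\mathcal{H}_{\pi}) \hat{\otimes} \mathcal{W}_{h}(\mathcal{H}_{\rho})$, the uniqueness theorem identifies the two spaces with equal norms, yielding the desired isomorphism of reproducing kernel Hilbert spaces. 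I expect the main obstacle to be the third step: confirming that $\pi \otimes \rho$ is genuinely square integrable with $g \otimes h$ admissible, which amounts to handling irreducibility of the external tensor product and the Duflo--Moore normalization carefully in the possibly non-unimodular setting, since there the operators $C_{\pi}$ and $C_{\rho}$ are only densely defined.
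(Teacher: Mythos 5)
Your proposal is correct and follows essentially the same route as the paper: factorize the wavelet transform on elementary tensors, observe that both sides are reproducing kernel Hilbert spaces on $G \times H$ with the product kernel $K_{\pi}(x,x')K_{\rho}(y,y')$, and conclude via the uniqueness theorem \cite[Proposition 2.3 and Theorem 2.4]{paulsen2016introduction}. You are in fact somewhat more careful than the paper, which verifies square integrability by the same norm computation but glosses over irreducibility of $\pi \otimes \rho$ and the Duflo--Moore normalization $\|C_{\pi \otimes \rho}(g \otimes h)\| = 1$ that you spell out (and which can be obtained directly from the orthogonality relation \eqref{wavelet_transform_orthogonality} applied to $f_1 = f_2 = g_1 = g_2 = g \otimes h$, avoiding the domain subtleties of identifying $C_{\pi \otimes \rho}$ with $C_{\pi} \otimes C_{\rho}$ in the non-unimodular case).
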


\begin{proof}
Let us first check that the representation $\pi \otimes \rho$ is square integrable and that $g \otimes h \in \mathcal{H}_{\pi} \hat{\otimes} \mathcal{H}_{\rho}$ is admissible. For $x \in G$ and $y \in H$ we have \begin{align*} \left\|\mathcal{W}_{g \otimes h}g \otimes h\right\|_{L^{2}(G \times H)}^{2} & = \int_{G \times H} |\langle g \otimes h, (\pi \otimes \rho)(x,y)(g \otimes h) \rangle|^2 \, d\mu_{L}^{G \times H}(x,y) \\ & = \int_{G \times H} |\langle g, \pi(x)g \rangle \langle h, \rho(y)h \rangle|^2 \, d\mu_{L}^{G \times H}(x,y) \\ & = \int_{G} |\langle g, \pi(x)g \rangle|^2 \, d\mu_{L}^{G}(x) \int_{H} |\langle h, \rho(y)h \rangle|^2 \, d\mu_{L}^{H}(y) \\ & = \|\mathcal{W}_{g}g\|_{L^{2}(G)}^2 \|\mathcal{W}_{h}h\|_{L^{2}(H)}^2 < \infty.
\end{align*}
Moreover, we see from the above computation that we have the pointwise equality \[\mathcal{W}_{g \otimes h} g \otimes h(x,y) = \mathcal{W}_{g}g(x) \mathcal{W}_{h}h(y), \qquad x \in G, \, y \in H,\] as functions on $G \times H$. Since the reproducing kernels for the space $\mathcal{W}_{g \otimes h}(\mathcal{H}_{\pi} \hat{\otimes} \mathcal{H}_{\rho})$ and the space $\mathcal{W}_{g}(\mathcal{H}_{\pi}) \hat{\otimes} \mathcal{W}_{h}(\mathcal{H}_{\rho})$ coincide, the result follows from the uniqueness of reproducing kernels given in Section \ref{sec: Reproducing_Kernel_Hilbert_Spaces}.
\end{proof}

\begin{example}
Consider the Gabor space $V_{g_{n}}(L^{2}(\mathbb{R}^n))$ where $g_{n}(x) = e^{-\frac{\pi}{2}x^2}$ is the $n$-dimensional Gaussian function. Then Theorem \ref{tensor_product_result} implies that \[V_{g_{n}}\left(L^{2}(\mathbb{R}^n)\right) \hat{\otimes}\, V_{g_{n}}\left(L^{2}(\mathbb{R}^n)\right) \simeq V_{g_{n} \otimes g_{n}}\left(L^{2}(\mathbb{R}^n) \otimes L^{2}(\mathbb{R}^n)\right) \simeq V_{g_{2n}}\left(L^{2}(\mathbb{R}^{2n})\right),\] where $g_{2n}$ is the $2n$-dimensional Gaussian function. Although this is folklore knowledge, we emphasize the the simplicity of its derivation from the theory of reproducing kernel Hilbert spaces. Many function spaces in complex analysis, e.g. the Hardy spaces and Bergman spaces, satisfy similar tensorization rules \cite[Proposition 5.13 and Proposition 5.14]{paulsen2016introduction}. This is maybe not so surprising given the connection between the Gabor space $V_{g_n}(L^{2}(\mathbb{R}^{n}))$ and complex analysis given in \cite[Proposition 3.4.1]{grochenig2001foundations}.
\end{example}

\section{Rigidity of Wavelet Spaces}
\label{sec: Disjointedness_of_Wavelet_Spaces}

In this section we will investigate how wavelet spaces associated with (potentially) different representations are related. The main result in Theorem \ref{equality_of_wavelet_spaces_general_statement} have several noteworthy consequences. The first consequence in Corollary \ref{equality_of_wavelet_spaces_special_case} is a new proof of one of the main results in \cite[Theorem 4.2]{cont_wavelet_transform}. The other consequences, Corollary \ref{original_consequence_1} and Corollary \ref{convex_combination_corollary}, are new and illustrate the broad utility of Theorem \ref{equality_of_wavelet_spaces_general_statement}. Let us first consider an example of the general setting where things are greatly simplified.

\begin{example}
\label{Example_abelian_groups}
Let $G$ be a locally compact group that is abelian and let $\pi:G \to \mathcal{U}(\mathcal{H}_{\pi})$ be a square integrable representation. It follows from Schur's Lemma \ref{Schur's_lemma} that $\mathcal{H}_{\pi} \simeq \mathbb{C}$ and $\mathcal{U}(\mathcal{H}_{\pi}) \simeq \mathbb{T}$. We make these identifications and view $\pi$ as a map from $G$ to $\mathbb{T}$. What requirements do the square integrability impose? For $z \in \mathbb{C} \setminus \{0\}$ we have that \[\int_{G}|\langle z, \pi(x)z \rangle|^2 \, d\mu(x) = |z|^4\mu(G).\] Hence $\pi$ is square integrable if and only if $\mu(G) < \infty$. This is the case precisely when $G$ is compact. Since $G$ is unimodular, it follows from Proposition \ref{Duflo_Moore_theorem} that the Duflo-Moore operator $C_{\pi}$ is a positive constant multiple of the identity. That the constant is equal to one can be seen by direct verification, or by an application of Peter-Weyl theory \cite[Example 12.2.7]{deitmar2014principles}. Hence a complex number $z \in \mathbb{C}$ is admissible if and only if $z \in \mathbb{T}$. The wavelet spaces $\mathcal{W}_{z}(\mathbb{C})$ for $z \in \mathbb{T}$ are one-dimensional subspaces of $L^{2}(G)$ that are spanned by the elements $\mathcal{W}_{z}z$. Moreover, all the wavelet spaces $\mathcal{W}_{z}(\mathbb{C})$ coincide since $\mathcal{W}_{z}z = \mathcal{W}_{1}1$ for all $z \in \mathbb{T}$.
\end{example}

Notice that everything said in Example \ref{Example_abelian_groups} is independent of the representation in question: In the abelian case, all the wavelet spaces coincide even when we have two different representations $\pi:G \to \mathbb{T}$ and $\rho:G \to \mathbb{T}$. On the other hand, we always have that any two admissible vectors $z,w \in \mathbb{T}$ (regardless of the choice of representations) are related by $z = cw$ for some $c \in \mathbb{T}$. These elementary remarks motivate the following general result.

\begin{theorem}
\label{equality_of_wavelet_spaces_general_statement}
Let $\pi:G \to \mathcal{U}(\mathcal{H}_{\pi})$ and $\rho:G \to \mathcal{U}(\mathcal{H}_{\rho})$ be two square integrable representations with admissible vectors $g \in \mathcal{H}_{\pi}$ and $h \in \mathcal{H}_{\rho}$. Assume that the corresponding wavelet spaces intersect non-trivially, that is, \[\mathcal{W}_{g}(\mathcal{H}_{\pi}) \cap \mathcal{W}_{h}(\mathcal{H}_{\rho}) \neq \{0\}.\] Then $\mathcal{W}_{g}(\mathcal{H}_{\pi}) = \mathcal{W}_{h}(\mathcal{H}_{\rho})$ and there exists a unitary intertwining operator $T:\mathcal{H}_{\pi} \to \mathcal{H}_{\rho}$ satisfying $T(g) = h$.
\end{theorem}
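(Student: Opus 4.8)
The plan is to combine two ingredients already available: the fact, established in Section \ref{sec: Basic_Properties_of_the_Wavelet_Spaces}, that each wavelet transform is a unitary map satisfying the intertwining relation $\mathcal{W}_{g}(\pi(y)f) = L_{y}\mathcal{W}_{g}(f)$ between $\pi$ and the left-regular representation $L$ on $L^{2}(G)$, together with the uniqueness of reproducing kernels recorded in Section \ref{sec: Reproducing_Kernel_Hilbert_Spaces}. The equality of the two wavelet spaces will come from an irreducibility argument, while the existence of the intertwiner with $T(g) = h$ will rely crucially on identifying the reproducing kernels.

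First I would note that $\mathcal{W}_{g}(\mathcal{H}_{\pi})$ and $\mathcal{W}_{h}(\mathcal{H}_{\rho})$ are closed $L$-invariant subspaces of $L^{2}(G)$, and that $L$ restricted to each is irreducible. Indeed, since $\mathcal{W}_{g}\colon \mathcal{H}_{\pi} \to \mathcal{W}_{g}(\mathcal{H}_{\pi})$ is a unitary isomorphism conjugating $\pi$ into $L|_{\mathcal{W}_{g}(\mathcal{H}_{\pi})}$, the latter is unitarily equivalent to the irreducible $\pi$ and hence irreducible. The intersection $\mathcal{W}_{g}(\mathcal{H}_{\pi}) \cap \mathcal{W}_{h}(\mathcal{H}_{\rho})$ is a closed, $L$-invariant subspace of the irreducible space $\mathcal{W}_{g}(\mathcal{H}_{\pi})$; being non-trivial by hypothesis, it must equal all of $\mathcal{W}_{g}(\mathcal{H}_{\pi})$, and by the symmetric argument all of $\mathcal{W}_{h}(\mathcal{H}_{\rho})$. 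This gives the asserted equality $\mathcal{W}_{g}(\mathcal{H}_{\pi}) = \mathcal{W}_{h}(\mathcal{H}_{\rho}) =: \mathcal{V}$.

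Next I would build the intertwiner. Since $\mathcal{W}_{g}$ and $\mathcal{W}_{h}$ are both unitary maps onto the common space $\mathcal{V}$, the composition $T := \mathcal{W}_{h}^{-1} \circ \mathcal{W}_{g}\colon \mathcal{H}_{\pi} \to \mathcal{H}_{\rho}$ is unitary. Using $\mathcal{W}_{g} \circ \pi(x) = L_{x} \circ \mathcal{W}_{g}$ and the consequence $\rho(x) \circ \mathcal{W}_{h}^{-1} = \mathcal{W}_{h}^{-1} \circ L_{x}$ of the analogous relation for $\rho$, one computes $T \circ \pi(x) = \mathcal{W}_{h}^{-1} \circ L_{x} \circ \mathcal{W}_{g} = \rho(x) \circ T$, so $T$ is a unitary intertwining operator.

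The delicate point, which I expect to be the crux, is verifying $T(g) = h$; this does \emph{not} follow from the construction of $T$ on its own. Here I would invoke the uniqueness of reproducing kernels: because $\mathcal{W}_{g}(\mathcal{H}_{\pi})$ and $\mathcal{W}_{h}(\mathcal{H}_{\rho})$ coincide as sets and both carry the norm inherited from $L^{2}(G)$, the converse direction in Section \ref{sec: Reproducing_Kernel_Hilbert_Spaces} forces their reproducing kernels to be equal. In particular the point kernels at the identity agree, so $\mathcal{W}_{g}g = k_{e} = \mathcal{W}_{h}h$. Consequently $T(g) = \mathcal{W}_{h}^{-1}(\mathcal{W}_{g}g) = \mathcal{W}_{h}^{-1}(\mathcal{W}_{h}h) = h$, which completes the argument.
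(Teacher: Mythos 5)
Your proof is correct, and while it rests on the same two pillars as the paper's (irreducibility plus uniqueness of reproducing kernels), the construction of the intertwiner is genuinely different. The first half matches the paper: the intersection is a closed, translation-invariant subspace of $\mathcal{W}_{g}(\mathcal{H}_{\pi})$, on which the left-regular representation is irreducible via the unitary intertwiner $\mathcal{W}_{g}$, so non-triviality forces $\mathcal{W}_{g}(\mathcal{H}_{\pi}) = \mathcal{W}_{h}(\mathcal{H}_{\rho})$, and since both carry the restricted $L^{2}(G)$-norm the uniqueness statement from Section \ref{sec: Reproducing_Kernel_Hilbert_Spaces} identifies the kernels. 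From there the paper extracts the single identity $\mathcal{W}_{g}g = \mathcal{W}_{h}h$, defines $T$ on the orbit by $T(\pi(x)g) := \rho(x)h$, verifies isometry on $\mathrm{span}\{\pi(x)g\}_{x \in G}$ using that identity, and extends by density before checking surjectivity and the intertwining relation separately. You instead set $T := \mathcal{W}_{h}^{-1} \circ \mathcal{W}_{g}$, which is automatically a well-defined unitary because both transforms map onto the common space, and is automatically intertwining by the relations $\mathcal{W}_{g} \circ \pi(x) = L_{x} \circ \mathcal{W}_{g}$ and $\mathcal{W}_{h} \circ \rho(x) = L_{x} \circ \mathcal{W}_{h}$; the kernel equality is then invoked only once, at the identity element, to conclude $T(g) = \mathcal{W}_{h}^{-1}(\mathcal{W}_{g}g) = \mathcal{W}_{h}^{-1}(\mathcal{W}_{h}h) = h$. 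Your route is arguably cleaner: it sidesteps the well-definedness and extension delicacies of the orbit-based definition --- in particular the paper's closing computation, which writes an arbitrary vector as an infinite series $\sum_{i} c_{i}\pi(x_{i})g$, a representation that density of the span does not literally supply --- none of which your composition requires. What the paper's approach buys in exchange is that its construction of $T$ uses only the scalar identity $\mathcal{W}_{g}g = \mathcal{W}_{h}h$ (a GNS-type argument from a common function of positive type), thereby isolating the fact that equality of the diagonal matrix coefficients alone already forces the unitary equivalence with $T(g) = h$; your construction needs the full equality $\mathcal{W}_{g}(\mathcal{H}_{\pi}) = \mathcal{W}_{h}(\mathcal{H}_{\rho})$ even to form $\mathcal{W}_{h}^{-1} \circ \mathcal{W}_{g}$, but in the setting of the theorem that equality is available, so your argument is complete.
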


\begin{proof}
Notice that the subspace $\mathcal{W}_{h}(\mathcal{H}_{\rho}) \cap \mathcal{W}_{g}(\mathcal{H}_{\pi}) \subset \mathcal{W}_{g}(\mathcal{H}_{\pi})$ is invariant under translations. Since $\pi$ is irreducible and $\mathcal{W}_{g}:\mathcal{H}_{\pi} \to \mathcal{W}_{g}(\mathcal{H}_{\pi})$ is a unitary intertwiner we have that $\mathcal{W}_{h}(\mathcal{H}_{\rho}) = \mathcal{W}_{g}(\mathcal{H}_{\pi})$.
The norms on $\mathcal{W}_{g}(\mathcal{H}_{\pi}) = \mathcal{W}_{h}(\mathcal{H}_{\rho})$ both coincide with the restriction of the $L^{2}(G)$-norm. Hence $\mathcal{W}_{g}(\mathcal{H}_{\pi})$ and $\mathcal{W}_{h}(\mathcal{H}_{\rho})$ are reproducing kernel Hilbert spaces that coincide with equal norms. By the uniqueness statements given in Section \ref{sec: Reproducing_Kernel_Hilbert_Spaces} the two reproducing kernels coincide \[\mathcal{W}_{g}(\pi(y)g)(x) = \mathcal{W}_{h}(\rho(y)h)(x), \qquad x,y \in G.\]
Since $\mathcal{W}_{g}(\pi(y)g)(x) = L_{y}\mathcal{W}_{g}g(x)$ and $\mathcal{W}_{h}(\rho(y)h)(x) = L_{y}\mathcal{W}_{h}h(x)$, all the information we need is contained in the equality 
\begin{equation}
\label{equality_of_kernels}
    \mathcal{W}_{g}g(x) = \mathcal{W}_{h}h(x), \qquad x \in G.
\end{equation}
\par 
To define the map $T:\mathcal{H}_{\pi} \to \mathcal{H}_{\rho}$ we first require that $T(g) = h$. Moreover, for $T$ to be an intertwining operator, we need that \[T(\pi(x)g) = \rho(x)h, \qquad x \in G.\] Since $\pi$ is irreducible the set $\mathcal{M}_{g} := \textrm{span}\{\pi(x)g\}_{x \in G}$ is dense in $\mathcal{H}_{\pi}$. To see that $T$ extends to all of $\mathcal{H}_{\pi}$ we will show that it is an isometry on the subspace $\mathcal{M}_{g}$: For $x,y \in G$ we have \[\langle T(\pi(x)g), T(\pi(y)g) \rangle _{\mathcal{H}_{\pi}} = \langle \rho(x)h, \rho(y)h \rangle _{\mathcal{H}_{\pi}} = \langle h, \rho(x^{-1}y)h \rangle _{\mathcal{H}_{\pi}} = \mathcal{W}_{h}h(x^{-1}y).\] Hence we obtain from \eqref{equality_of_kernels} that \[\langle T(\pi(x)g), T(\pi(y)g) \rangle _{\mathcal{H}_{\pi}} = \mathcal{W}_{g}g(x^{-1}y) = \langle \pi(x)g, \pi(y)g \rangle _{\mathcal{H}_{\pi}}.\] The map $T$ is surjective since $\textrm{span}\{\rho(x)h\}_{x \in G}$ is dense in $\mathcal{H}_{\rho}$ due to the irreducibility of $\rho$. Hence $T$ is a unitary map. For $g \in \mathcal{H}_{\pi}$ we can write $g = \sum_{i = 1}^{\infty}c_i \pi(x_i)g$ for constants $c_i \in \mathbb{C}$ and elements $x_i \in G$. Then for $x \in G$ it follows that 
\begin{equation*}
    T(\pi(x)g) = T\left(\pi(x)\sum_{i = 1}^{\infty}c_{i}\pi(x_i)g\right) = \sum_{i = 1}^{\infty}c_{i}T\left(\pi(xx_i)g\right) = \sum_{i = 1}^{\infty}c_{i}\rho(xx_i)h = \rho(x)T(g). \qedhere
\end{equation*}
\end{proof}

Notice that Theorem \ref{equality_of_wavelet_spaces_general_statement} trivially implies that whenever $\pi$ and $\rho$ are not equivalent, then we necessarily have trivial intersection \[\mathcal{W}_{g}(\mathcal{H}_{\pi}) \cap \mathcal{W}_{h}(\mathcal{H}_{\rho}) = \{0\},\]
for any admissible vectors $g \in \mathcal{H}_{\pi}$ and $h \in \mathcal{H}_{\rho}$. The first application of Theorem \ref{equality_of_wavelet_spaces_general_statement} is a new proof of the result \cite[Theorem 4.2]{cont_wavelet_transform} which we state in Corollary \ref{equality_of_wavelet_spaces_special_case} below. This was originally proved by utilizing the orthogonality relations \eqref{wavelet_transform_orthogonality} for the wavelet transform. Recently, the result has been re-proven in the Garbor case in \cite[Lemma 3.3]{luef2020wiener} with the use of quantum harmonic analysis. For us, the result follows immediately from Theorem \ref{equality_of_wavelet_spaces_general_statement} together with Schur's Lemma \ref{Schur's_lemma}.

\begin{corollary}
\label{equality_of_wavelet_spaces_special_case}
Let $\pi:G \to \mathcal{U}(\mathcal{H}_{\pi})$ be a square integrable representation with admissible vectors $g, h \in \mathcal{H}_{\pi}$. If $\mathcal{W}_{h}(\mathcal{H}_{\pi}) \cap \mathcal{W}_{g}(\mathcal{H}_{\pi}) \neq \{0\}$ then $\mathcal{W}_{h}(\mathcal{H}_{\pi}) = \mathcal{W}_{g}(\mathcal{H}_{\pi})$ and $h = cg$ for some $c \in \mathbb{T}$.
\end{corollary}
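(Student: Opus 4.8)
The plan is to recognize this corollary as the special case of Theorem~\ref{equality_of_wavelet_spaces_general_statement} in which the two representations coincide, $\rho = \pi$, and then to exploit the extra structure that irreducibility of a single representation provides via Schur's Lemma. First I would apply Theorem~\ref{equality_of_wavelet_spaces_general_statement} directly with $\mathcal{H}_{\rho} = \mathcal{H}_{\pi}$ and $\rho = \pi$. The hypothesis $\mathcal{W}_{h}(\mathcal{H}_{\pi}) \cap \mathcal{W}_{g}(\mathcal{H}_{\pi}) \neq \{0\}$ is exactly the non-trivial intersection assumption of the theorem, so the theorem immediately yields both the equality $\mathcal{W}_{h}(\mathcal{H}_{\pi}) = \mathcal{W}_{g}(\mathcal{H}_{\pi})$ (which is the first assertion of the corollary) and a unitary intertwining operator $T:\mathcal{H}_{\pi} \to \mathcal{H}_{\pi}$ satisfying $T(g) = h$.

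The remaining step is to identify this operator. Since $T$ is now an intertwining operator from $\mathcal{H}_{\pi}$ to itself, meaning $T \circ \pi(x) = \pi(x) \circ T$ for all $x \in G$, and $\pi$ is irreducible (being square integrable), Schur's Lemma~\ref{Schur's_lemma} forces $T = c \cdot Id_{\mathcal{H}_{\pi}}$ for some constant $c \in \mathbb{C}$. Because $T$ is unitary we must have $|c| = 1$, that is, $c \in \mathbb{T}$. Evaluating at the admissible vector $g$ then gives $h = T(g) = c\,g$, which is precisely the second assertion.

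There is no serious obstacle here: the substantive work has already been carried out in Theorem~\ref{equality_of_wavelet_spaces_general_statement}, and the only additional ingredient is the observation that an endomorphic intertwiner of an irreducible representation is scalar. The one point worth stating carefully is why $T$ maps $\mathcal{H}_{\pi}$ \emph{to itself} (so that Schur's Lemma applies in its stated form), but this is automatic once we set $\rho = \pi$, since the target and source spaces then literally coincide. I expect the proof to be only a few lines.
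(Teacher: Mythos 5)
Your proposal is correct and matches the paper's own argument exactly: the paper likewise deduces the corollary immediately from Theorem~\ref{equality_of_wavelet_spaces_general_statement} applied with $\rho = \pi$, followed by Schur's Lemma~\ref{Schur's_lemma} and unitarity to force $T = c \cdot Id_{\mathcal{H}_{\pi}}$ with $c \in \mathbb{T}$. Your write-up simply makes explicit the few lines the paper leaves to the reader.
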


\begin{remark}
The orthogonality relations \eqref{wavelet_transform_orthogonality} shows that the wavelet spaces $\mathcal{W}_{g}(\mathcal{H}_{\pi})$ and $\mathcal{W}_{h}(\mathcal{H}_{\pi})$ are orthogonal if and only if \[\langle C_{\pi}g, C_{\pi}h \rangle = 0,\] where $C_{\pi}$ is the Duflo-Moore operator. When $\langle C_{\pi}g, C_{\pi}h \rangle \neq 0$ the wavelet spaces still intersect trivially by Corollary \ref{equality_of_wavelet_spaces_special_case} except in the case $h = cg$ with $c \in \mathbb{T}$. 
\end{remark}

Before moving on, we show how we can combine Corollary \ref{equality_of_wavelet_spaces_special_case} with abstract results regarding functions of positive type to deduce concrete results for the wavelet transform.

\begin{definition}
A function $f:G \to \mathbb{C}$ on a locally compact group $G$ is said to be a function of \textit{(strictly) positive type} if for any finite subset $\Omega := \{x_1, \dots, x_m\} \subset G$, the matrix
\[\left\{f\left(x_{j}^{-1}x_{i}\right)\right\}_{i,j = 1}^{m}\]
is (strictly positive definite) positive semi-definite. 
\end{definition}

Let $\pi:G \to \mathcal{U}(\mathcal{H}_{\pi})$ be a square integrable representation with an admissible vector $g \in \mathcal{H}_{\pi}$. Then $\mathcal{W}_{g}g$ is a function of positive type due to Proposition \ref{spaces_are_RKHS} and the equality 
\begin{equation}
\label{proper_translation_invariance}
    \mathcal{W}_{g}g(x_{j}^{-1}x_i) = L_{x_{j}}\mathcal{W}_{g}g(x_i) = \mathcal{W}_{g}(\pi(x_j)g)(x_i), \qquad x_i,x_j \in G.
\end{equation}

\begin{corollary}
\label{original_consequence_1}
Let $\pi:G \to \mathcal{U}(\mathcal{H}_{\pi})$ and $\rho:G \to \mathcal{U}(\mathcal{H}_{\rho})$ be square integrable representations with admissible vectors $g \in \mathcal{H}_{\pi}$ and $h \in \mathcal{H}_{\rho}$, respectively. Then $\mathcal{W}_{g}g - \mathcal{W}_{h}h$ is never a non-zero function of positive type.
\end{corollary}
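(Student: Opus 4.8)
The plan is to assume that $f := \mathcal{W}_{g}g - \mathcal{W}_{h}h$ is a function of positive type and to deduce that $f$ must vanish identically; this is exactly the assertion that $f$ is never a \emph{non-zero} function of positive type. The starting observation is that the positive-type condition can be read off as a comparison of Gram matrices. Using $\mathcal{W}_{g}g(x_{j}^{-1}x_{i}) = \langle \pi(x_{j})g, \pi(x_{i})g \rangle$ together with the analogous identity for $h$, positive semi-definiteness of the matrices $\{f(x_{j}^{-1}x_{i})\}$ for every finite $\Omega = \{x_{1}, \dots, x_{m}\} \subset G$ is equivalent to the family of inequalities
\begin{equation*}
\left\| \sum_{j=1}^{m} c_{j}\, \rho(x_{j})h \right\|_{\mathcal{H}_{\rho}}^{2} \leq \left\| \sum_{j=1}^{m} c_{j}\, \pi(x_{j})g \right\|_{\mathcal{H}_{\pi}}^{2}, \qquad c_{1}, \dots, c_{m} \in \mathbb{C}.
\end{equation*}

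First I would use these inequalities to build a contractive intertwiner. They show that the assignment $\pi(x)g \mapsto \rho(x)h$ is well-defined and norm-decreasing on the dense subspace $\mathcal{M}_{g} = \mathrm{span}\{\pi(x)g\}_{x \in G}$, so it extends to a bounded operator $S : \mathcal{H}_{\pi} \to \mathcal{H}_{\rho}$ with $\|S\| \leq 1$ and $S(g) = h$. Exactly as in the proof of Theorem~\ref{equality_of_wavelet_spaces_general_statement}, the relation $S(\pi(x)g) = \rho(x)h$ propagates to $S \pi(x) = \rho(x) S$ for all $x \in G$, so $S$ is an intertwining operator. Taking adjoints shows that $S^{*}$ intertwines $\rho$ and $\pi$, hence $S^{*}S$ commutes with every $\pi(x)$. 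By Schur's Lemma~\ref{Schur's_lemma} we get $S^{*}S = c^{2}\,\mathrm{Id}_{\mathcal{H}_{\pi}}$ for some $c \in [0,1]$, and $c > 0$ because $S(g) = h \neq 0$.

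The next step is to feed this back into the wavelet transform. Using $\rho(x)S = S\pi(x)$ and $S^{*}S = c^{2}\,\mathrm{Id}$ we compute, for every $x \in G$,
\begin{equation*}
\mathcal{W}_{h}h(x) = \langle h, \rho(x)h \rangle = \langle Sg, S\pi(x)g \rangle = \langle S^{*}Sg, \pi(x)g \rangle = c^{2}\,\mathcal{W}_{g}g(x),
\end{equation*}
so that $f = (1 - c^{2})\,\mathcal{W}_{g}g$. It remains to show that $c = 1$. Admissibility of $g$ and $h$ gives $\|\mathcal{W}_{g}g\|_{L^{2}(G)} = \|g\|_{\mathcal{H}_{\pi}}$ and $\|\mathcal{W}_{h}h\|_{L^{2}(G)} = \|h\|_{\mathcal{H}_{\rho}}$, whence the identity $\mathcal{W}_{h}h = c^{2}\mathcal{W}_{g}g$ forces $\|h\| = c^{2}\|g\|$. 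On the other hand $\|h\|^{2} = \|Sg\|^{2} = \langle S^{*}Sg, g \rangle = c^{2}\|g\|^{2}$ gives $\|h\| = c\|g\|$. Comparing the two relations and using $g \neq 0$ yields $c = c^{2}$, so $c = 1$ and $f = 0$, as desired.

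The main obstacle is the passage from the scalar positive-type inequality to the operator $S$: one must verify that $S$ is genuinely well-defined, independently of how an element of $\mathcal{M}_{g}$ is written as a finite linear combination, and that it is contractive, which is precisely where the positive-type hypothesis enters. Once $S$ is in hand, Schur's lemma and the Duflo--Moore normalization make the remainder routine. Conceptually, the argument is the elementary counterpart of the statement that $\mathcal{W}_{g}g$, being attached to the irreducible representation $\pi$, is a \emph{pure} function of positive type, so any positive-type function that it dominates must be a non-negative scalar multiple of it.
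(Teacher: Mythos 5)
Your proof is correct, but it takes a genuinely different route from the paper's. The paper disposes of this corollary in a few lines using machinery already in place: positive type of $\mathcal{W}_{g}g - \mathcal{W}_{h}h$ gives, via Aronszajn's inclusion theorem, the containment $\mathcal{W}_{h}(\mathcal{H}_{\rho}) \subset \mathcal{W}_{g}(\mathcal{H}_{\pi})$; Theorem \ref{equality_of_wavelet_spaces_general_statement} then hands over a \emph{unitary} intertwiner $T$ with $T(g) = h$, and the difference is computed to vanish directly. You never pass through the wavelet spaces at all: you unfold the positive-type hypothesis into the Gram-matrix domination $\left\|\sum_{j} c_{j}\rho(x_{j})h\right\|^{2} \leq \left\|\sum_{j} c_{j}\pi(x_{j})g\right\|^{2}$, run the standard GNS-type argument to obtain a contractive intertwiner $S$ with $S(g) = h$, invoke Schur's Lemma \ref{Schur's_lemma} to get $S^{*}S = c^{2}\,Id_{\mathcal{H}_{\pi}}$, and then --- the step with no analogue in the paper's proof --- exploit the Duflo--Moore normalization twice (evaluating $\mathcal{W}_{h}h = c^{2}\mathcal{W}_{g}g$ at $e$ versus comparing $L^{2}(G)$-norms, which give $\|h\| = c\|g\|$ and $\|h\| = c^{2}\|g\|$ respectively) to force $c = 1$. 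In effect you reprove, in one self-contained contraction argument, both the special case of Aronszajn's theorem that the paper cites and the intertwiner-construction half of Theorem \ref{equality_of_wavelet_spaces_general_statement}. Each approach buys something: the paper's proof is shorter and exhibits the corollary as an immediate consequence of the rigidity theorem, whereas yours is citation-free, makes explicit where admissibility enters (in the paper it is hidden inside the unitarity of $T$), and in fact proves slightly more, namely that positive-type domination alone forces $\mathcal{W}_{h}h = c^{2}\mathcal{W}_{g}g$ for a contractive intertwiner --- the purity heuristic you state at the end. Two cosmetic points: the paper's definition of positive type quantifies over \emph{distinct} points, so to get your inequality for arbitrary finite families you should note that repeated points are handled by merging coefficients; and the propagation of $S(\pi(x)g) = \rho(x)h$ to $S\pi(x) = \rho(x)S$ is cleanest if verified on $\mathcal{M}_{g}$ via $S(\pi(y)\pi(x)g) = \rho(yx)h = \rho(y)S(\pi(x)g)$ and extended by density, rather than by mimicking the series expansion appearing at the end of the paper's proof of Theorem \ref{equality_of_wavelet_spaces_general_statement}.
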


\begin{proof}
Assume that $\mathcal{W}_{g}g - \mathcal{W}_{h}h$ is a function of positive type. Then Aronszajn’s inclusion theorem \cite[Theorem 1.7.1]{aronszajn1950theory} in reproducing kernel Hilbert space theory implies that $\mathcal{W}_{h}(\mathcal{H}_{\rho}) \subset \mathcal{W}_{g}(\mathcal{H}_{\pi})$. Hence Theorem \ref{equality_of_wavelet_spaces_general_statement} implies that $\mathcal{W}_{h}(\mathcal{H}_{\pi}) = \mathcal{W}_{g}(\mathcal{H}_{\pi})$ and that $h = T(g)$ for some unitary intertwining operator $T:\mathcal{H}_{\pi} \to \mathcal{H}_{\rho}$. This implies for $x \in G$ that \begin{align*}
    \mathcal{W}_{g}g(x) - \mathcal{W}_{h}h(x) & = \langle g, \pi(x)g \rangle - \langle T(g), \rho(x)T(g) \rangle \\ & = \langle g, \pi(x)g \rangle - \langle T(g), T(\pi(x)g) \rangle \\ & = \langle g, \pi(x)g \rangle - \langle g, \pi(x)g \rangle \\ & = 0. \qedhere
\end{align*}
\end{proof}

For a locally compact group $G$ we let $\mathcal{P}_{c}$ denote the functions $f:G \to \mathbb{C}$ of positive type such that $f(e) = c \in \mathbb{C}$, where $e$ is the identity element of $G$. 

\begin{corollary}
\label{convex_combination_corollary}
Let $\pi:G \to \mathcal{U}(\mathcal{H}_{\pi})$ be a square integrable representation of a unimodular group $G$ with admissible vectors $g,g_1,g_2 \in \mathcal{H}_{\pi}$. Assume we can write $\mathcal{W}_{g}g$ as a convex combination \[\mathcal{W}_{g}g = t \cdot \mathcal{W}_{g_1}g_{1} + (1 - t) \cdot \mathcal{W}_{g_2}{g_2},\] for some $t \in [0,1]$. Then $t \in \{0,1\}$ and we either have $g = cg_1$ or $g = c g_2$ for some $c \in \mathbb{T}$.
\end{corollary}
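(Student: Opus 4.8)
The plan is to recognize this as an extreme-point statement for positive-type functions and to run the same machine used in Corollary \ref{original_consequence_1}: Aronszajn's inclusion theorem feeding into the irreducibility argument behind Theorem \ref{equality_of_wavelet_spaces_general_statement}. First I would cash in unimodularity. By the discussion following Proposition \ref{Duflo_Moore_theorem}, $C_{\pi} = c_{\pi}\cdot Id$, so every admissible vector has the same norm $\|g\| = \|g_1\| = \|g_2\| = c_{\pi}^{-1}$; consequently the three positive-type functions agree at the identity, and with $\kappa := c_{\pi}^{-2}$ all of $\mathcal{W}_{g}g,\ \mathcal{W}_{g_1}g_1,\ \mathcal{W}_{g_2}g_2$ lie in the common convex slice $\mathcal{P}_{\kappa}$. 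This is what makes the convex-combination question well posed.

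I would then dispatch the endpoints immediately: if $t = 0$ (resp.\ $t = 1$) the hypothesis reads $\mathcal{W}_{g}g = \mathcal{W}_{g_2}g_2$ (resp.\ $=\mathcal{W}_{g_1}g_1$), so the two wavelet spaces have identical reproducing kernels via $K(x,y) = \mathcal{W}_{g}g(y^{-1}x)$ from \eqref{proper_translation_invariance}, hence coincide, and Corollary \ref{equality_of_wavelet_spaces_special_case} yields the required proportionality. So the real work is the case $t \in (0,1)$. Here I rearrange to $\mathcal{W}_{g}g - t\,\mathcal{W}_{g_1}g_1 = (1-t)\,\mathcal{W}_{g_2}g_2$; since $1-t \geq 0$ and $\mathcal{W}_{g_2}g_2$ is of positive type, the left-hand side is of positive type. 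Translating through $K(x,y) = \mathcal{W}_{g}g(y^{-1}x)$, this says the kernel difference $K_{g} - t\,K_{g_1}$ is positive semi-definite, so Aronszajn's inclusion theorem gives $\mathcal{W}_{g_1}(\mathcal{H}_{\pi}) \subseteq \mathcal{W}_{g}(\mathcal{H}_{\pi})$ as spaces of functions — the scalar $t > 0$ being harmless, as multiplying a reproducing kernel by a positive constant only rescales the norm and leaves the underlying function space unchanged. By symmetry $\mathcal{W}_{g_2}(\mathcal{H}_{\pi}) \subseteq \mathcal{W}_{g}(\mathcal{H}_{\pi})$ as well.

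Next I would invoke the irreducibility step from the proof of Theorem \ref{equality_of_wavelet_spaces_general_statement}: $\mathcal{W}_{g_1}(\mathcal{H}_{\pi})$ is a nonzero closed left-invariant subspace of $\mathcal{W}_{g}(\mathcal{H}_{\pi})$, and the latter carries an irreducible copy of left translation because $\mathcal{W}_{g}$ intertwines the irreducible $\pi$ with the left-regular representation. Hence $\mathcal{W}_{g_1}(\mathcal{H}_{\pi}) = \mathcal{W}_{g}(\mathcal{H}_{\pi})$, and Corollary \ref{equality_of_wavelet_spaces_special_case} gives $g = c_1 g_1$ with $c_1 \in \mathbb{T}$; likewise $g = c_2 g_2$. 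Substituting $\mathcal{W}_{g}g = \mathcal{W}_{g_1}g_1$ back into the hypothesis leaves $(1-t)\big(\mathcal{W}_{g_1}g_1 - \mathcal{W}_{g_2}g_2\big) = 0$, so a genuinely two-sided decomposition cannot persist and $t$ is driven to an endpoint, delivering the stated conclusion.

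I expect the main obstacle to be the Aronszajn step, where one must use precisely the kernel-domination form of the inclusion theorem (as in Corollary \ref{original_consequence_1}) and be careful that the factor $t$ in front of $K_{g_1}$ affects only the norm, not the space. The one genuinely delicate point is the very last deduction that $t \in \{0,1\}$: the argument forces $g$ to be a unit-modulus multiple of both $g_1$ and $g_2$, so one should make explicit that an interior $t$ is excluded exactly when $g_1$ and $g_2$ are not already phase-multiples of $g$ (the fully degenerate case $\mathcal{W}_{g_1}g_1 = \mathcal{W}_{g_2}g_2 = \mathcal{W}_{g}g$ being where the decomposition is trivial to begin with), which is precisely the extreme-point statement for $\mathcal{W}_{g}g$ in $\mathcal{P}_{\kappa}$.
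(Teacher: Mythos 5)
Your proof is correct, but it takes a genuinely different route from the paper. The paper's proof is two lines: unimodularity puts $\mathcal{W}_{g}g$, $\mathcal{W}_{g_1}g_1$, $\mathcal{W}_{g_2}g_2$ in a common slice $\mathcal{P}_{c}(G)$ of positive-type functions with fixed value at $e$, and it then simply cites \cite[Theorem C.5.2]{bekka2008kazhdan} --- diagonal matrix coefficients of \emph{irreducible} unitary representations are extreme points of this convex set --- to kill the convex combination, after which Corollary \ref{equality_of_wavelet_spaces_special_case} upgrades $\mathcal{W}_{g}g = \mathcal{W}_{g_i}g_i$ to $g = cg_i$. You instead reprove the extremality from the paper's own machinery: rearranging to $\mathcal{W}_{g}g - t\,\mathcal{W}_{g_1}g_1 = (1-t)\,\mathcal{W}_{g_2}g_2$, observing that the right-hand side is of positive type, hence $t\,K_{g_1} \leq K_{g}$ as kernels, and running Aronszajn's inclusion theorem exactly as in Corollary \ref{original_consequence_1} (correctly noting that the factor $t>0$ rescales only the norm, not the space) to get $\mathcal{W}_{g_1}(\mathcal{H}_{\pi}) \subseteq \mathcal{W}_{g}(\mathcal{H}_{\pi})$, then the rigidity results to conclude $g = c_1g_1$ and, symmetrically, $g = c_2g_2$. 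Two small remarks. First, your left-invariance/irreducibility step is redundant: once the inclusion $\mathcal{W}_{g_1}(\mathcal{H}_{\pi}) \subseteq \mathcal{W}_{g}(\mathcal{H}_{\pi})$ is in hand, the intersection is already non-trivial and Corollary \ref{equality_of_wavelet_spaces_special_case} applies directly. Second, for interior $t$ your argument delivers $\mathcal{W}_{g_1}g_1 = \mathcal{W}_{g_2}g_2 = \mathcal{W}_{g}g$ rather than literally $t \in \{0,1\}$; your sentence that $t$ is ``driven to an endpoint'' overstates what the substitution yields (it gives $0=0$, and indeed if $g_1,g_2$ are both phase multiples of $g$ the hypothesis holds for \emph{every} $t$). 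Your closing paragraph flags precisely this degenerate case, and the paper's own ``This implies that $t=0$ or $t=1$'' carries the same looseness, since extremality also only forces equality of the three functions for interior $t$. What each approach buys: the paper's is shorter but outsources the key point to an external extreme-point theorem; yours is self-contained within the paper's RKHS toolbox and, beyond the normalization framing, does not essentially use unimodularity.
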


\begin{proof}
Notice that $\mathcal{W}_{g}g \in \mathcal{P}_{c_{\pi}^{-1}}(G)$ where $C_{\pi} = c_{\pi} \cdot Id_{\mathcal{H}_{\pi}}$ since  \[\mathcal{W}_{g}g(e) = \|g\|_{\mathcal{H}_{\pi}} = c_{\pi}^{-1}.\] It follows from \cite[Theorem C.5.2]{bekka2008kazhdan} that the functions $\mathcal{W}_{g}g$ are extreme points in the bounded convex set $P_{c_{\pi}^{-1}}(G)$. This implies that $t = 0$ or $t=1$ and hence $\mathcal{W}_{g}g = \mathcal{W}_{g_1}g_1$ or $\mathcal{W}_{g}g = \mathcal{W}_{g_2}g_2$. Although what is said so far is well known, we can now apply Corollary \ref{equality_of_wavelet_spaces_special_case} to conclude that $g = cg_1$ or $g = c g_2$ for some $c \in \mathbb{T}$.
\end{proof}

\begin{example}
Let us check that everything works out for the STFT. Assume for normalized vectors $g,g_1,g_2 \in L^{2}(\mathbb{R}^{n})$ that \begin{equation}
\label{STFT_convexity}
    V_{g}g(x,\omega) = t \cdot V_{g_1}g_1(x,\omega) + (1 - t) \cdot V_{g_2}g_2(x,\omega),
\end{equation}
for some $t \in [0,1]$ and for all $(x,\omega) \in \mathbb{R}^{2n}$. Then by multiplying with $e^{-2\pi i \tau}e^{\pi i x \cdot \omega}$ on both sides we obtain \[\mathcal{W}_{g}g\Big(x,\omega,e^{2\pi i \tau}\Big) = t \cdot \mathcal{W}_{g_1}g_1\Big(x,\omega,e^{2\pi i \tau}\Big) + (1 - t) \cdot \mathcal{W}_{g_2}g_2\Big(x,\omega,e^{2\pi i \tau}\Big), \quad \Big(x,\omega,e^{2\pi i \tau}\Big) \in \mathbb{H}_{r}^{n}.\] We can now apply Corollary \ref{convex_combination_corollary} to see that $g = cg_1$ or $g = c g_2$ for some $c \in \mathbb{T}$. \par
In this specialized setting we describe an alternative proof using quantum mechanical reasoning. Assume again that \eqref{STFT_convexity} holds for some $t \in [0,1]$. For $g \in L^{2}(\mathbb{R}^{n})$ the \textit{Wigner distribution} $Wg$ in quantum mechanics can be defined through the STFT by the formula \[Wg(x,\omega) := 2^n e^{4 \pi i x \cdot \omega}V_{\mathcal{I}(g)}g(2x,2\omega), \qquad \mathcal{I}(g)(x) := g(-x).\] Hence \eqref{STFT_convexity} is equivalent to 
\[Wg(x,\omega) = t \cdot Wg_1(x,\omega) + (1 - t) \cdot Wg_2(x,\omega),\]
for all $(x,\omega) \in \mathbb{R}^{2n}$. One can now use the Weyl-quantization to go between functions on $\mathbb{R}^{2n}$ and operators on $L^{2}(\mathbb{R}^n)$. In this correspondence the Wigner distributions $Wg$ for $g \in L^{2}(\mathbb{R}^{n})$ correspond to the positive rank-one operators $g \otimes g$ given by \[(g \otimes g)(f) := \langle f, g \rangle \cdot g, \qquad f \in L^{2}(\mathbb{R}^{n}).\] Hence we obtain 
\[g \otimes g = t \cdot g_1 \otimes g_1 + (1 - t) \cdot g_2 \otimes g_2.\]
One can easily see by evaluation that this forces the same conclusion, namely that $g = cg_1$ or $g = c g_2$ for some $c \in \mathbb{T}$.
\end{example}

\section{Interpolation in Wavelet Spaces}
\label{sec: Interpolation_Problem}

We have seen on multiple occasions that the reproducing kernel Hilbert space structure of the wavelet spaces is immensely useful. We now focus in on that structure by considering a non-trivial interpolation problem. In this section we will describe the interpolation problem and show that the answer is not always affirmative. As we will see in the Section \ref{sec: Relationship_With_the_HRT-Conjecture}, the interpolation problem turns out to be equivalent to the HRT-Conjecture for the Gabor spaces. 

\begin{definition}
Let $X$ be a set and consider the distinct points $\Omega := \{x_1, \dots, x_m\} \subset X$ and possibly non-distinct scalars $\lambda_1, \dots, \lambda_m \in \mathbb{C}$. We say that a function $F:X \to \mathbb{C}$ \textit{interpolates} these points whenever $F(x_i) = \lambda_i$ for all $i = 1, \dots, m$. The function $F$ is called an \textit{interpolating function}.
\end{definition}

The question in interpolation theory is whether we can find an interpolating function with additional requirements. Typically, we have a Hilbert space $\mathcal{H}$ of functions on $X$ and ask whether we can choose $F \in \mathcal{H}$ as an interpolating function. When $\mathcal{H}$ is a reproducing Hilbert space, we can give an explicit criterion through the reproducing kernel. We state this result for the case we have investigated. 

\begin{proposition}
\label{sampeling_result}
Let $\pi:G \to \mathcal{U}(\mathcal{H}_{\pi})$ be a square integrable representation and fix an admissible vector $g \in \mathcal{H}_{\pi}$. Consider distinct points $\Omega := \{x_1, \dots, x_m\} \in G$ and possibly non-distinct scalars $\lambda_1, \dots, \lambda_m \in \mathbb{C}$. There exists an interpolating function $F \in \mathcal{W}_{g}(\mathcal{H}_{\pi})$ if and only if the vector $(\lambda_1, \dots, \lambda_m)^{T} \in \mathbb{C}^m$ is in the image of the $m \times m$ matrix 
\begin{equation*}
    K _{\Omega} :=  \left\{K(x_i,x_j)\right\}_{i,j = 1}^{m},
\end{equation*}
where $K$ is the reproducing kernel for the wavelet space $\mathcal{W}_{g}(\mathcal{H}_{\pi})$.
\end{proposition}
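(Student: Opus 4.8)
The plan is to exploit the reproducing property together with the observation that the interpolation constraints only depend on the finite-dimensional subspace spanned by the relevant point kernels. Write $k_{x_i} = \mathcal{W}_{g}(\pi(x_i)g)$ for the point kernel at $x_i$, as identified in Proposition \ref{spaces_are_RKHS}, and set $V := \mathrm{span}\{k_{x_1}, \dots, k_{x_m}\} \subset \mathcal{W}_{g}(\mathcal{H}_{\pi})$. For any $F \in \mathcal{W}_{g}(\mathcal{H}_{\pi})$ the reproducing property gives $F(x_i) = \langle F, k_{x_i} \rangle$, so the value-vector $(F(x_1), \dots, F(x_m))^{T}$ depends only on the orthogonal projection of $F$ onto $V$. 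Consequently the set of value-vectors realized by functions in $\mathcal{W}_{g}(\mathcal{H}_{\pi})$ coincides with the set realized by functions of the form $F = \sum_{j=1}^{m} c_j k_{x_j}$ with $c = (c_1, \dots, c_m)^{T} \in \mathbb{C}^m$. This reduction is the backbone of both implications.

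For sufficiency, suppose $(\lambda_1, \dots, \lambda_m)^{T} = K_{\Omega} c$ for some $c \in \mathbb{C}^m$. I would set $F := \sum_{j=1}^{m} c_j k_{x_j} \in \mathcal{W}_{g}(\mathcal{H}_{\pi})$ and compute, using $K(x_i,x_j) = \langle k_{x_j}, k_{x_i} \rangle$,
\[
F(x_i) = \langle F, k_{x_i} \rangle = \sum_{j=1}^{m} c_j \langle k_{x_j}, k_{x_i} \rangle = \sum_{j=1}^{m} c_j K(x_i,x_j) = (K_{\Omega} c)_i = \lambda_i,
\]
so $F$ interpolates the prescribed data.

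For necessity, suppose $F \in \mathcal{W}_{g}(\mathcal{H}_{\pi})$ interpolates, i.e. $F(x_i) = \lambda_i$ for all $i$. Let $F_V = \sum_{j=1}^{m} c_j k_{x_j}$ be the orthogonal projection of $F$ onto $V$. Since $F - F_V \perp k_{x_i}$ for each $i$, the reproducing property yields $\lambda_i = \langle F, k_{x_i} \rangle = \langle F_V, k_{x_i} \rangle = \sum_{j=1}^{m} c_j K(x_i,x_j) = (K_{\Omega} c)_i$. Hence $(\lambda_1, \dots, \lambda_m)^{T} = K_{\Omega} c$ lies in the image of $K_{\Omega}$, completing the equivalence.

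The only genuine subtlety worth flagging is that $K_{\Omega}$ need not be invertible, as emphasized after \eqref{omega_matrices}; one therefore cannot phrase solvability through an inverse $c = K_{\Omega}^{-1}(\lambda_1,\dots,\lambda_m)^{T}$, and the correct unconditional criterion is precisely membership of the data vector in $\mathrm{Image}(K_{\Omega})$. The passage from the full wavelet space to the finite-dimensional span $V$ via orthogonal projection is what makes the two directions symmetric and what allows us to dispense with any invertibility hypothesis; everything else is a routine application of the reproducing identity from Proposition \ref{spaces_are_RKHS}.
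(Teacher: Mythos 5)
Your proof is correct and takes essentially the same route as the paper: the paper proves Proposition \ref{sampeling_result} simply by citing Proposition \ref{spaces_are_RKHS} together with \cite[Theorem 3.4]{paulsen2016introduction}, and your argument is precisely the standard proof of that cited theorem --- reduce to the span of the point kernels $k_{x_1},\dots,k_{x_m}$ via orthogonal projection and identify the attainable value-vectors with the image of the Gram matrix $K_{\Omega}$. Your explicit handling of the case where $K_{\Omega}$ is not invertible is exactly the point the paper flags after \eqref{omega_matrices}, so nothing is missing.
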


The proof of Proposition \ref{sampeling_result} follows from Proposition \ref{spaces_are_RKHS} together with \cite[Theorem 3.4]{paulsen2016introduction}. We remarked in Section \ref{sec: Reproducing_Kernel_Hilbert_Spaces} that the matrices $K_{\Omega}$ are always positive semi-definite. The interpolation problem in Proposition \ref{sampeling_result} have a unique solution for all $\Omega = \{x_1, \dots, x_m\} \subset G$ and $\lambda_1, \dots, \lambda_m \in \mathbb{C}$ if and only if the matrices $K_{\Omega}$ are all strictly positive definite. This is the case if and only if the function $\mathcal{W}_{g}g$ is a function of strictly positive type. This is the motivation for the terminology \textit{fully interpolating} given in Section \ref{sec: Reproducing_Kernel_Hilbert_Spaces}. Notice that for the point kernels $k_{x_1}, \dots, k_{x_m}$ we can write
\begin{equation*}
\sum_{i,j = 1}^{m} \overline{\alpha_i}\alpha_j k_{x_j}(x_i) = \left\langle \sum_{j = 1}^{m} \alpha_j k_{x_j}, \sum_{i = 1}^{m} \alpha_i k_{x_i} \right\rangle = \left\|\sum_{i = 1}^{m}\alpha_i k_{x_i}\right\|^2 \geq 0,  
\end{equation*}
for $\alpha_1, \dots, \alpha_m \in \mathbb{C}$. Hence $\mathcal{W}_{g}(\mathcal{H}_{\pi})$ is fully interpolating precisely when there are no non-trivial linear combinations between the point kernels $k_{x_1}, \dots, k_{x_m}$ for any points $x_1, \dots, x_m \in G$. 

\begin{remark}
It is straightforward to check that Proposition \ref{sampeling_result} is also valid for the Gabor spaces $V_{g}(L^{2}(\mathbb{R}^{n}))$. In that case, the point kernel corresponding to $(x,\omega) \in \mathbb{R}^{2n}$ is $k_{(x,\omega)} = V_{g}(M_{\omega}T_{x}g)$. Notice however that we get the extra phase-factor 
\begin{equation}
\label{extra_phase_factors}
    V_{g}(M_{\omega}T_{x}g)(s,t) = e^{-2 \pi i x \cdot (t - \omega)}V_{g}g(s - x, t - \omega), \quad (s,t) \in \mathbb{R}^{2n},
\end{equation}
in contrast with \eqref{proper_translation_invariance}. 
\end{remark}

When $G = \{e\}$ the only wavelet space associated with $G$ is the one-dimensional space $L^{2}(G)$. This is fully interpolating for trivial reasons. We exclude this case in future examples and refer to a locally compact group $G$ as \textit{non-trivial} when $G$ has more than one element. The next result shows that a large class of wavelet spaces are not fully interpolating. 

\begin{proposition}
\label{no_abelian_or_compact_are_fully_interpolating}
Let $G$ be a non-trivial locally compact group. If $G$ is either abelian or compact then no wavelet space associated to $G$ is fully interpolating. 
\end{proposition}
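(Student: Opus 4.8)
The strategy is to observe that for the groups in question every wavelet space is finite-dimensional, and to recall from the discussion following Proposition~\ref{sampeling_result} that $\mathcal{W}_{g}(\mathcal{H}_{\pi})$ is fully interpolating if and only if, for every finite set $\Omega \subset G$, the point kernels $\{k_{x}\}_{x \in \Omega}$ are linearly independent (equivalently, every matrix $K_{\Omega}$ is strictly positive definite). A finite-dimensional reproducing kernel Hilbert space violates this as soon as its index set carries more points than the dimension, and exhibiting such a configuration is the whole content of the proof.

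I would first dispose of the abelian case. If $G$ is abelian but non-compact, then $\widehat{G}_{s} = \emptyset$, so there are no square integrable representations and hence no wavelet spaces; the assertion holds vacuously. If $G$ is abelian and compact, it falls under the compact case treated below (and, as Example~\ref{Example_abelian_groups} records, the wavelet space is then one-dimensional, so any two distinct points of the non-trivial group $G$ already yield a singular $2 \times 2$ Gram matrix). Thus it suffices to treat the compact case.

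For the compact case, Proposition~\ref{spaces_are_RKHS} provides a surjective isometry $\mathcal{W}_{g}:\mathcal{H}_{\pi} \to \mathcal{W}_{g}(\mathcal{H}_{\pi})$, so $\dim \mathcal{W}_{g}(\mathcal{H}_{\pi}) = \dim \mathcal{H}_{\pi} =: d$, and $d < \infty$ since the wavelet spaces of compact groups are finite-dimensional by \cite[Theorem 5.2]{folland2016course}. Provided $G$ contains $d+1$ distinct elements $x_{1}, \dots, x_{d+1}$, the point kernels $k_{x_{1}}, \dots, k_{x_{d+1}}$ are $d+1$ vectors in a $d$-dimensional space and are therefore linearly dependent; hence the matrix $K_{\Omega}$ for $\Omega = \{x_{1}, \dots, x_{d+1}\}$ is singular, not strictly positive definite, and $\mathcal{W}_{g}(\mathcal{H}_{\pi})$ fails to be fully interpolating by the criterion above.

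The one step demanding care, and the main obstacle, is guaranteeing that a non-trivial compact group has strictly more than $d$ elements. When $G$ is infinite this is immediate. When $G$ is finite, the representation-theoretic identity $\sum_{\rho}(\dim \rho)^{2} = |G|$ applied to the irreducible representation $\pi$ gives $d^{2} \leq |G|$. If $d = 1$, then non-triviality forces $|G| \geq 2 > d$; if $d \geq 2$, then $|G| \geq d^{2} \geq 2d > d$. In either case $|G| > d$, which supplies the required $d+1$ points and completes the argument.
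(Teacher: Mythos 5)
Your proof is correct and follows essentially the same route as the paper: reduce the abelian case to the compact one (non-compact abelian being vacuous since $\widehat{G}_{s} = \emptyset$), use finite-dimensionality of irreducible representations of compact groups to force a linear dependence among $d+1$ point kernels, and invoke the class equation $|G| = \sum_{[\pi]} \dim(\mathcal{H}_{\pi})^{2}$ to guarantee $|G| > d$ in the finite case. Your explicit case split $d=1$ versus $d \geq 2$ is merely a more detailed write-up of the step the paper dispatches with the same identity.
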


\begin{proof}
An abelian locally compact group $G$ possesses square integrable representations if and only if the group is compact. In this case, the representation theory of compact groups shows that any irreducible unitary representation of $G$ is finite-dimensional \cite[Theorem 5.2]{folland2016course}. Any irreducible unitary representation of $G$ is also automatically square integrable due to the compactness of $G$. If $G$ is an infinite group, then we can always pick $\Omega = \{x_1, \dots, x_m \} \subset G$ to have larger cardinality than the dimension of the representation considered. Then there is no way that $k_{x_1}, \dots, k_{x_m}$ can be linearly independent. If $G$ is a finite group, then the same argument goes through unless $G$ have an irreducible representation whose dimension is greater or equal to the order of the group $G$. This is not possible since the class equation in finite representation theory gives that 
\begin{equation*}
    |G| = \sum_{[\pi]}\textrm{dim}(\mathcal{H}_{\pi})^2,
\end{equation*}
where the sum runs over all equivalence classes of irreducible representation $\pi$ of $G$. Since we have excluded $G$ from being the trivial group, the result follows.
\end{proof}

\begin{example}
\label{example_STFT_of_Gaussian}
For the $n$-dimensional Gaussian function $g_{n}(x) := e^{-\frac{\pi}{2} x^2}$ we will show that the Gabor space $V_{g_{n}}(L^{2}(\mathbb{R}^{n}))$ is fully interpolating. A straightforward computation reveals that \[V_{g_n}g_n(x,\omega) = e^{- \pi i x \cdot \omega} e^{-\frac{\pi}{4}x^2}e^{-\pi \omega^2}, \qquad (x,\omega) \in \mathbb{R}^{2n}.\]
Assume by contradiction that there is a linear dependence between the point kernels $k_{(x_k,\omega_k)}$ corresponding to distinct points $(x_k,\omega_k) \in \mathbb{R}^{2n}$ for $k = 1, \dots, m$. The linear dependence explicitly gives 
\begin{equation*}
    \sum_{k = 1}^{m} \alpha_k e^{2 \pi i x_k \cdot \omega_k} e^{-2 \pi i x_k \cdot \omega}e^{- \pi i (x - x_k)\cdot(\omega - \omega_k)}e^{-\frac{\pi}{4}(x - x_k)^{2}}e^{-\pi(\omega - \omega_k)^{2}} = 0,
\end{equation*}
where $\alpha_1, \dots, \alpha_n \in \mathbb{C}$ are not all zero. By setting $\beta_k = \alpha_k e^{\pi i x_k \cdot \omega_k}e^{-\frac{\pi}{4}x_k^2}e^{-\pi \omega_{k}^2}$ we obtain 
\begin{equation*}
    e^{-\pi i x \cdot \omega}e^{-\frac{\pi}{4}x^2} e^{-\pi \omega^2}\sum_{k = 1}^{m}\beta_k e^{-2 \pi i x_k \cdot \omega}e^{\pi i(x\cdot\omega_k + \omega \cdot x_k)}e^{\frac{\pi}{2}x \cdot x_k}e^{2 \pi \omega \cdot \omega_k} = 0.
\end{equation*}
We can divide by the non-zero function $e^{-\pi i x \cdot \omega}e^{-\frac{\pi}{4}x^2} e^{-\pi \omega^2}$ and set $\omega = 0$ to get the simplified equation
\begin{equation}
\label{last_sum_equation}
    \sum_{k = 1}^{m}\beta_k e^{x \cdot \left(\frac{\pi}{2}x_k + i \omega_k\right)} = 0.
\end{equation}
Notice that the coefficients $\beta_k$ satisfy $\beta_k = 0$ if and only if $\alpha_k = 0$. The equation \eqref{last_sum_equation} contradicts the independence of the exponential functions $x \mapsto e^{x \cdot \lambda_k}$, see e.g. \cite[Lemma 13.1]{cheney2009course}, since $\lambda_k = \frac{\pi}{2}x_k + i \omega_k$ are distinct complex numbers. 
\end{example}

\begin{example}
\label{example_with_plot}
To illustrate that the Gabor space $V_{g_{1}}(L^{2}(\mathbb{R})) \subset L^{2}(\mathbb{R}^{2})$ is fully interpolating we consider the points $x_1 = (0,0)$, $x_2 = (1,0),$ and $x_3 = (0,1)$ in $\mathbb{R}^{2}$ along with $\lambda_1 = \lambda_2 = \lambda_3 = 1$. Then there exists $F \in V_{g_{1}}(L^{2}(\mathbb{R}))$ such that $F(x_i) = \lambda_i$ for $i = 1,2,3$. Moreover, the function $F \in V_{g_{1}}(L^{2}(\mathbb{R}))$ with minimal norm that interpolates these points will be on the form \[F(x,\omega) = \alpha_1 V_{g_1}g_1(x,\omega) + \alpha_2 V_{g_1}g_1(x - 1,\omega) + \alpha_2 V_{g_1}g_1(x,\omega - 1),\]
for some $\alpha_1,\alpha_2,\alpha_3 \in \mathbb{C}$ by \cite[Theorem 3.4]{paulsen2016introduction}.
It follows by straightforward computations that $\alpha_1 \simeq 0.6218$, $\alpha_2 \simeq 0.7360$, and $\alpha_3 \simeq 0.9876$.
\begin{figure}[h!]
\centering
\includegraphics[scale=0.17]{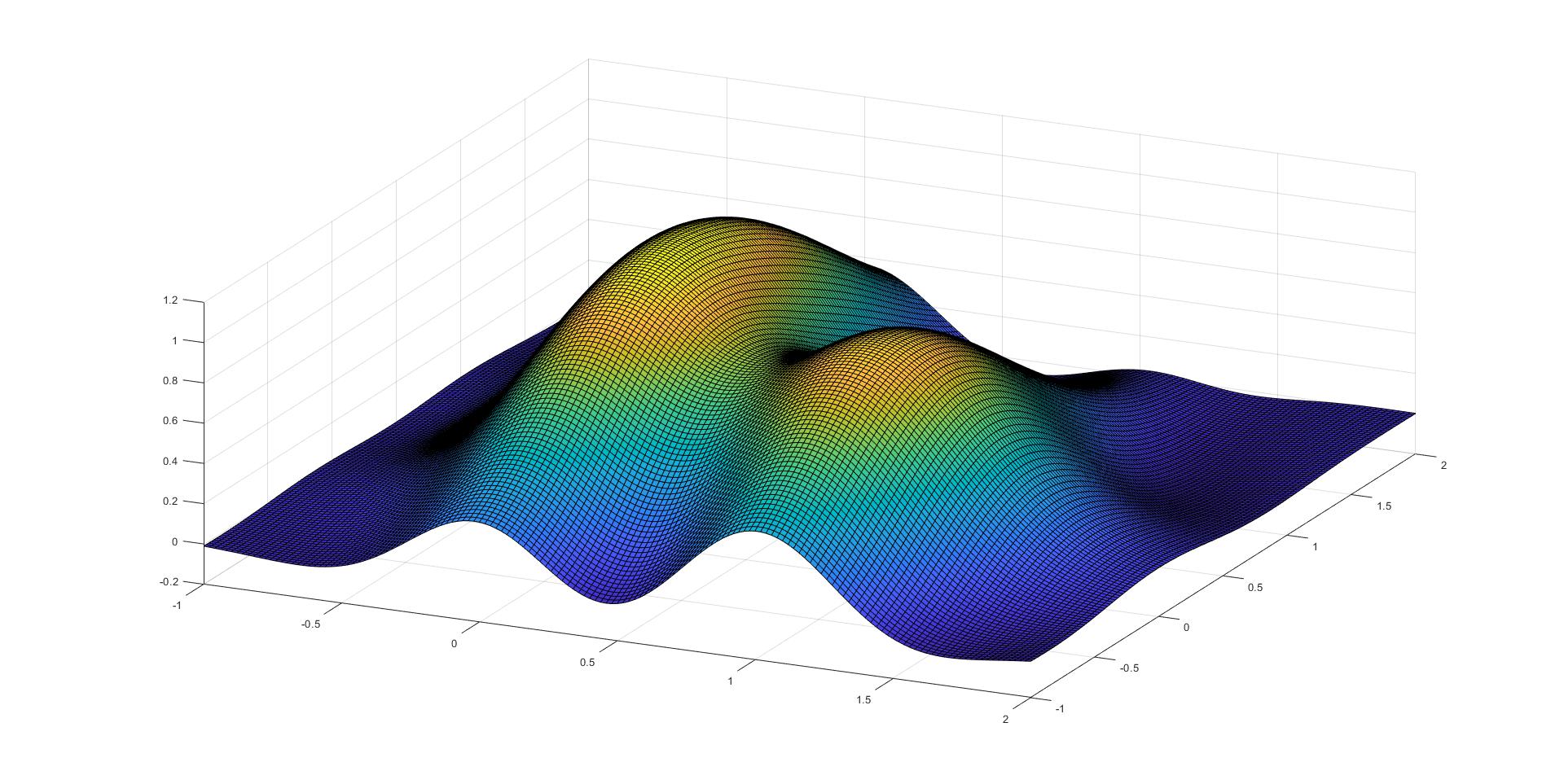}
\caption{The real part of the interpolating function $F(x,\omega)$ in Example \ref{example_with_plot}.}
\end{figure}
\end{example}

\begin{remark}
The function $V_{g_n}g_n$ on $\mathbb{R}^{2n}$ is not strictly positive definite even though the Gabor space $V_{g_n}(L^{2}(\mathbb{R}^{n}))$ is fully interpolating. This discrepancy is due to the extra phase-factor in \eqref{extra_phase_factors}. In fact, the function $V_{g}g(x,\omega)$ is not even positive definite: If this was the case, then the Fourier inverse $\mathcal{F}^{-1}(V_{g}g)$ would be a positive function on $\mathbb{R}^{2n}$ by Bochner's Theorem \cite[Theorem 10.4]{paulsen2016introduction}. However, the function $\mathcal{F}^{-1}(V_{g}g)(x,\omega) = e^{-\pi(x^2 + \omega^2)} e^{2 \pi i x \cdot \omega}$ is clearly not even real valued. 
\end{remark}

\section{Connection With the HRT-Conjecture}
\label{sec: Relationship_With_the_HRT-Conjecture}

The question of whether the Gabor spaces are fully interpolating turns out to be equivalent to the infamous HRT-Conjecture. Recall that a subset $\mathcal{A}\subset \mathcal{H}$ of a vector space $\mathcal{H}$ is said to be \textit{linearly independent} if every finite subset $F \subset \mathcal{A}$ is linearly independent in the classical sense. The following open conjecture reveals how little is understood about time-frequency shifts.

\begin{conjecture*}[HRT]
Is the set \[\left\{M_{\omega}T_{x}g\right\}_{(x,\omega) \in \mathbb{R}^{2n}}\] linearly independent in $L^{2}(\mathbb{R}^{n})$ for all non-zero $g \in L^{2}(\mathbb{R}^{n})$?
\end{conjecture*}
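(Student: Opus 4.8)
The plan is to attack the HRT-Conjecture through the reproducing kernel Hilbert space reformulation developed above, while stating at the outset that the conjecture is open and that what follows is therefore a program rather than a complete argument. First I would reduce to a finitary problem: since linear independence of a set means linear independence of each of its finite subsets, it suffices to fix a finite configuration $\Omega = \{(x_1,\omega_1),\dots,(x_m,\omega_m)\} \subset \mathbb{R}^{2n}$ of distinct points and a non-zero $g \in L^{2}(\mathbb{R}^{n})$, and to show that the shifts $M_{\omega_k}T_{x_k}g$ are linearly independent. Using the interpolation picture of Proposition \ref{sampeling_result} for the Gabor space, and recalling that the point kernels are $k_{(x,\omega)} = V_g(M_\omega T_x g)$ while $V_g$ is an isometry, this is exactly the assertion that the reproducing-kernel Gram matrix $K_\Omega = \{K((x_i,\omega_i),(x_j,\omega_j))\}_{i,j=1}^{m}$ is strictly positive definite, equivalently that $\det K_\Omega \neq 0$.

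Second, I would exploit the symmetries of the problem to normalize $\Omega$. The metaplectic representation intertwines with the symplectic action on phase space: for each $A \in Sp(2n,\mathbb{R})$ there is a unitary $\mu(A)$ on $L^{2}(\mathbb{R}^{n})$ conjugating the time-frequency shift at $(x,\omega)$ to the one at $A(x,\omega)$ up to a phase. Since unitaries preserve linear independence, I may replace $\Omega$ by $A\cdot\Omega$ and $g$ by $\mu(A)g$, which lets me place several of the points in a standard position and simplifies $K_\Omega$. For the smallest cases this normalization together with an explicit evaluation of the Gram determinant settles matters: two points are immediate, and suitably normalized three-point and special four-point configurations can be treated by direct computation of $\det K_\Omega$ or by the exponential-independence technique already used for the Gaussian window in Example \ref{example_STFT_of_Gaussian}.

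For a general configuration I would then try to establish $\det K_\Omega \neq 0$ by controlling the off-diagonal entries, which are the inner products $\langle M_{\omega_j}T_{x_j}g, M_{\omega_i}T_{x_i}g\rangle$, i.e. values of $V_g g$ up to phase. When the points are well separated these entries decay, so $K_\Omega$ becomes diagonally dominant and hence invertible; this recovers the known unconditional results for widely spread or lattice-like configurations. For windows $g$ with rapid decay or compact support one can instead localize and reduce to a genuinely finite-dimensional linear-algebra problem.

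The hard part, and the reason the conjecture remains open, is precisely the regime this program cannot reach: four or more points in general position paired with an arbitrary window, where the entries of $K_\Omega$ are neither small enough for diagonal dominance nor algebraically rigid enough for the exponential-independence argument. Showing $\det K_\Omega \neq 0$ there would require a genuinely new handle on the cancellations among time-frequency shifts, and no such tool is presently available; this is the obstacle I do not expect to surmount.
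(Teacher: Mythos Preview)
The statement is an open conjecture; the paper does not prove it and does not claim to. What the paper does is reformulate HRT as the assertion that every Gabor space is fully interpolating (Proposition~\ref{equvalence_between_fully_interpolating_and_HRT_conjecture}) and then record partial consequences: the Gaussian window via exponential independence (Example~\ref{example_STFT_of_Gaussian}) and widely spaced configurations via diagonal dominance of $K_\Omega$. Your proposal is explicitly a program, not a proof, and it tracks the paper's content almost point for point --- the finitary reduction, the Gram-matrix reformulation, the diagonal-dominance regime, and the honest admission that the general case is out of reach. The one ingredient you add beyond the paper is the metaplectic normalization of $\Omega$, which is a standard and legitimate reduction but does not appear in the paper itself. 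There is no gap to flag beyond the one you already name: neither you nor the paper proves the conjecture, because it remains open.
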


The HRT-Conjecture was originally posed back in 1996 by C. Heil, J. Ramanathan, and P. Topiwala in the paper \cite{heil1996linear}. There have been many significant developments on the conjecture during the years, where techniques from von-Naumann algebras \cite{linnell1999neumann}, spectral theory \cite{balan2010almost}, ergodic theory \cite{heil2006linear}, and representation theory of the Heisenberg groups \cite{currey_oussa} have been used. We refer the reader to the introduction of the paper \cite{okoudjou2019extension} for a reasonably extensive list of contributions to the HRT conjecture. Moreover, we recommend the survey papers \cite{heil2006linear, heil2015hrt} on the HRT-Conjecture written by one of its founders. The following result shows that the HRT-Conjecture can be reformulated to a problem regarding reproducing kernel Hilbert spaces. 

\begin{proposition}
\label{equvalence_between_fully_interpolating_and_HRT_conjecture}
The HRT-Conjecture is equivalent to the statement that the Gabor spaces are fully interpolating.
\end{proposition}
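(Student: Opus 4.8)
The plan is to collapse both sides of the claimed equivalence into the single assertion that finite families of time-frequency shifts of a fixed window are linearly independent, and to pass between $L^{2}(\mathbb{R}^{n})$ and the Gabor space by exploiting the injectivity of the short-time Fourier transform. Throughout I fix a window $g \in L^{2}(\mathbb{R}^{n})$ with $\|g\|_{L^{2}(\mathbb{R}^{n})} = 1$, so that $V_{g}(L^{2}(\mathbb{R}^{n}))$ is a genuine Gabor space, and I recall from the discussion following Proposition \ref{sampeling_result} two facts: the point kernel at $(x_{j},\omega_{j}) \in \mathbb{R}^{2n}$ is $k_{(x_{j},\omega_{j})} = V_{g}(M_{\omega_{j}}T_{x_{j}}g)$, and $V_{g}(L^{2}(\mathbb{R}^{n}))$ is fully interpolating precisely when, for every finite collection of distinct points $(x_{1},\omega_{1}), \dots, (x_{m},\omega_{m})$, the point kernels $k_{(x_{1},\omega_{1})}, \dots, k_{(x_{m},\omega_{m})}$ admit no non-trivial linear dependence.

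First I would invoke that $V_{g}:L^{2}(\mathbb{R}^{n}) \to L^{2}(\mathbb{R}^{2n})$ is an isometry, hence an injective linear map: this follows because $V_{g}$ agrees with the wavelet transform $\mathcal{W}_{g}$ for the Schr\"{o}dinger representation up to the unimodular phase-factor in \eqref{relation_wavelet_transform_STFT}, and $\mathcal{W}_{g}$ is an isometry. Consequently a linear combination $\sum_{j=1}^{m}\alpha_{j}k_{(x_{j},\omega_{j})} = V_{g}\big(\sum_{j=1}^{m}\alpha_{j}M_{\omega_{j}}T_{x_{j}}g\big)$ vanishes if and only if $\sum_{j=1}^{m}\alpha_{j}M_{\omega_{j}}T_{x_{j}}g = 0$ in $L^{2}(\mathbb{R}^{n})$. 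Thus the point kernels $k_{(x_{1},\omega_{1})}, \dots, k_{(x_{m},\omega_{m})}$ are linearly independent exactly when the time-frequency shifts $M_{\omega_{1}}T_{x_{1}}g, \dots, M_{\omega_{m}}T_{x_{m}}g$ are linearly independent.

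Combining the two observations, the Gabor space $V_{g}(L^{2}(\mathbb{R}^{n}))$ is fully interpolating if and only if every finite family of time-frequency shifts of $g$ at distinct points is linearly independent, that is, if and only if $\{M_{\omega}T_{x}g\}_{(x,\omega) \in \mathbb{R}^{2n}}$ is linearly independent in the sense recalled before the HRT-Conjecture. Quantifying over the window, the statement \emph{every Gabor space is fully interpolating} becomes \emph{$\{M_{\omega}T_{x}g\}$ is linearly independent for every normalized $g$}, which is the HRT-Conjecture. The restriction to unit-norm windows is harmless, since scaling $g$ by a non-zero constant affects neither the linear independence of its time-frequency shifts nor, after renormalizing, the Gabor space under consideration.

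The only place any care is needed is the matching of the two universal quantifiers: full interpolation ranges over all finite collections of \emph{distinct} points $(x_{j},\omega_{j})$, whereas the HRT statement ranges over all finite subsets of $\{M_{\omega}T_{x}g\}$. These coincide once one observes that distinct parameter points yield distinct shift operators $M_{\omega_{j}}T_{x_{j}}$, so no collapsing of the index set occurs and the two finite-family quantifiers are in genuine bijection. No deeper obstacle arises; once the injectivity of $V_{g}$ is in hand, the equivalence is a direct translation of the definitions.
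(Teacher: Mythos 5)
Your proposal is correct and takes essentially the same approach as the paper: both reduce a linear dependence among the point kernels $k_{(x_{j},\omega_{j})} = V_{g}(M_{\omega_{j}}T_{x_{j}}g)$ to one among the time-frequency shifts themselves, after the harmless normalization $\|g\|_{L^{2}(\mathbb{R}^{n})} = 1$. Where the paper passes back from kernels to shifts by invoking the completeness of $\{M_{\omega}T_{x}g\}_{(x,\omega) \in \mathbb{R}^{2n}}$ in $L^{2}(\mathbb{R}^{n})$, you instead invoke the injectivity of the isometry $V_{g}$ --- but these are the same fact in different packaging, since $V_{g}f = 0$ holds exactly when $\langle f, M_{\omega}T_{x}g \rangle = 0$ for all $(x,\omega)$.
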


\begin{proof}
Let us fix elements $(x_1,\omega_1), \dots, (x_m,\omega_m) \in \mathbb{R}^{2n}$ and consider the collection \begin{equation}
\label{time_frequency_shifts_independence}
    \{M_{\omega_k}T_{x_k}g\}_{k = 1}^{m}.
\end{equation} 
We henceforth assume that $\|g\|_{L^{2}(\mathbb{R}^{n})} = 1$ since normalizing $g \in L^{2}(\mathbb{R}^{n})$ does not change whether the collection \eqref{time_frequency_shifts_independence} is linearly independent. \par 
Assume first that the collection \eqref{time_frequency_shifts_independence} is linearly dependent, that is, there exist $\alpha_1, \dots, \alpha_m \in \mathbb{C}$ not all zero such that \[\sum_{k = 1}^{m} \alpha_{k}M_{\omega_k}T_{x_k}g = 0.\]
We can take the inner-product with the function $M_{\omega}T_{x}g$ to obtain 
\begin{equation*}
    \sum_{k = 1}^{m} \alpha_{k}\left\langle M_{\omega_k}T_{x_k}g, M_{\omega}T_{x}g \right \rangle = \sum_{k = 1}^{m} \alpha_{k} V_{g}(M_{\omega_k}T_{x_k}g)(x,\omega) = \sum_{k = 1}^{m} \alpha_{k} k_{(x_k,\omega_k)}(x,\omega) = 0.
\end{equation*}
This gives a linear dependence between $k_{(x_1,\omega_1)}, \dots, k_{(x_m,\omega_m)}$, showing that $V_{g}(L^{2}(\mathbb{R}^{n}))$ is not fully interpolating. \par 
Conversely, assume that $V_{g}(L^{2}(\mathbb{R}^{n}))$ is not fully interpolating. Then there exists a linear dependence between the point kernels $k_{(x_1,\omega_1)}, \dots, k_{(x_m,\omega_m)}$ for some points $(x_1, \omega_1), \dots, (x_m,\omega_m) \in \mathbb{R}^{2n}$. Retracing the steps we took previously we conclude that \[\sum_{k = 1}^{m} \alpha_{k}\left\langle M_{\omega_k}T_{x_k}g, M_{\omega}T_{x}g \right \rangle = 0,\]
where $\alpha_1, \dots, \alpha_m \in \mathbb{C}$ are not all zero. The proof of Proposition \ref{completeness_proposition} shows that the collection $\{M_{\omega}T_{x}g\}_{(x,\omega) \in \mathbb{R}^{2n}}$ is complete in $L^{2}(\mathbb{R}^{n})$. This implies the linear dependence \[\sum_{k = 1}^{m}\alpha_k M_{\omega_k}T_{x_k}g = 0. \qedhere\]
\end{proof}

Proposition \ref{equvalence_between_fully_interpolating_and_HRT_conjecture} allows us to use the partial results available on the HRT-Conjecture in the literature to deduce that certain Gabor spaces are fully interpolating. In particular, it was known from the beginning \cite[Proposition 4]{heil1996linear} that the HRT-Conjecture is true for the $n$-dimensional Gaussian function. In Example \ref{example_STFT_of_Gaussian} we proved, in light of Proposition \ref{equvalence_between_fully_interpolating_and_HRT_conjecture}, the same thing by brute-force calculations with the short-time Fourier transform. We can use \cite[Proposition 4]{heil1996linear} and Proposition \ref{equvalence_between_fully_interpolating_and_HRT_conjecture} to conclude that the Gabor spaces $V_{g}(L^{2}(\mathbb{R}^{n}))$ are fully interpolating whenever $g$ is a Hermite function.

\vspace{0.25cm}
\begin{remark}
A careful read of the proof of Proposition \ref{equvalence_between_fully_interpolating_and_HRT_conjecture} reveals that the statement is true in the generalized setting. More precisely, let $\pi:G \to \mathcal{U}(\mathcal{H}_{\pi})$ be a square integrable representation with an admissible vector $g \in \mathcal{H}_{\pi}$. Then the collection $\{\pi(x)g\}_{x \in G}$ is linearly independent in $\mathcal{H}_{\pi}$ if and only if the wavelet space $\mathcal{W}_{g}(\mathcal{H}_{\pi}) \subset L^{2}(G)$ is fully interpolating. Hence the problem of whether the wavelet space $\mathcal{W}_{g}(\mathcal{H}_{\pi})$ is fully interpolating is a convenient generalization of the HRT-Conjecture.
In this reformulation, Proposition \ref{no_abelian_or_compact_are_fully_interpolating} states that the generalized HRT-Conjecture is false for compact or abelian groups. Moreover, the generalized HRT-Conjecture is also false in the classical wavelet setting \cite{heil2015hrt} as a result of the scaling relation in wavelet theory. Another generalization of the HRT-Conjecture is considered in \cite{kutyniok2002linear}.   
\end{remark}
\vspace{0.25cm}

Recently there has been an effort to prove the HRT-Conjecture for widely spaced index sets \cite{kreisel2019letter, nicola2019note}. In particular, it is showed in \cite[Theorem 1]{kreisel2019letter} that the HRT-Conjecture holds for $g \in C_{0}(\mathbb{R}^{n})$ and points $\Omega := \{(x_1,\omega_1), \dots, (x_m,\omega_m))\} \subset \mathbb{R}^{2n}$ that are widely spaced apart relative to the decay of $g$. Through our approach, we can deduce a similar result without the assumption that $g \in C_{0}(\mathbb{R}^{n})$ since the STFT satisfies $V_{g}g \in C_{0}(\mathbb{R}^{2n})$ for all $g \in L^{2}(\mathbb{R}^{n})$.

\begin{corollary}
Let $g \in L^{2}(\mathbb{R}^n)$ be a non-zero function. There exists $R > 0$ (depending only on $g$ and $m \in \mathbb{N}$) such that for any collection of points $(x_1,\omega_1), \dots, (x_m, \omega_m) \in \mathbb{R}^{2n}$ with 
\begin{equation}
\label{far_spread_points}
    \min_{i \neq j}\sqrt{(x_j - x_i)^2 + (\omega_j - \omega_i)^2} \geq R, \qquad i,j = 1, \dots, m,
\end{equation}
the time-frequency shifts $\{M_{\omega_k}T_{x_k}g\}_{k = 1}^{m}$ are linearly independent. 
\end{corollary}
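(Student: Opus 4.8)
The plan is to recast linear independence of the shifts as invertibility of a Gram matrix and then exploit the decay of $V_g g$. We may assume $\|g\|_{L^2(\mathbb{R}^n)} = 1$, since normalizing does not affect linear independence. Following the argument in the proof of Proposition \ref{equvalence_between_fully_interpolating_and_HRT_conjecture}, the shifts $\{M_{\omega_k}T_{x_k}g\}_{k=1}^m$ are linearly independent precisely when the point kernels $k_{(x_k,\omega_k)} = V_g(M_{\omega_k}T_{x_k}g)$ are linearly independent in the Gabor space $V_g(L^2(\mathbb{R}^n))$. As recorded in Section \ref{sec: Interpolation_Problem}, this is in turn equivalent to the positive semi-definite Gram matrix $\mathcal{G} := \{\langle k_{(x_j,\omega_j)}, k_{(x_i,\omega_i)} \rangle\}_{i,j=1}^m$ being strictly positive definite, i.e.\ invertible.

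Next I would compute the entries of $\mathcal{G}$. Since $V_g$ is an isometry, the $(i,j)$ inner product equals $\langle M_{\omega_j}T_{x_j}g, M_{\omega_i}T_{x_i}g \rangle$, whose modulus, after applying the commutation relations for $T_x$ and $M_\omega$ displayed in \eqref{extra_phase_factors}, is exactly $|V_g g(x_i - x_j, \omega_i - \omega_j)|$. Thus the diagonal entries are $\|g\|^2 = 1$, while each off-diagonal entry has modulus $|V_g g|$ evaluated at the difference of the two points, a quantity depending only on the Euclidean distance between $(x_i,\omega_i)$ and $(x_j,\omega_j)$ in $\mathbb{R}^{2n}$. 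The decisive input is that $V_g g \in C_0(\mathbb{R}^{2n})$ for every $g \in L^2(\mathbb{R}^n)$, as noted just before the statement: for each $\varepsilon > 0$ there is a radius $R > 0$ with $|V_g g(z)| < \varepsilon$ whenever $|z| \geq R$. Taking $\varepsilon = 1/m$ fixes $R = R(g,m)$, and under the spreading hypothesis \eqref{far_spread_points} every off-diagonal entry of $\mathcal{G}$ then has modulus strictly less than $1/m$.

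Finally I would close the argument with the Gershgorin circle theorem: in each row of $\mathcal{G}$ the sum of the off-diagonal moduli is at most $(m-1)/m < 1$, so every eigenvalue lies in a disk centered at the diagonal value $1$ of radius strictly below $1$ and is therefore nonzero. Hence $\mathcal{G}$ is invertible, the point kernels are linearly independent, and so are the time-frequency shifts. The only point requiring genuine care is aligning the phase bookkeeping so that the off-diagonal moduli really equal $|V_g g|$ at the difference point, and ensuring the $C_0$-decay is measured in the same Euclidean metric as \eqref{far_spread_points}; once these are matched up the Gershgorin estimate is immediate, so I do not expect a serious obstacle beyond this routine alignment.
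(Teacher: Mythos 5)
Your proposal is correct and follows essentially the same route as the paper: reduce via Proposition \ref{equvalence_between_fully_interpolating_and_HRT_conjecture} to invertibility of the Gram matrix, note the diagonal entries equal $\|g\|^2 = 1$ while the off-diagonal entries have modulus $|V_g g(x_i - x_j, \omega_i - \omega_j)|$, and use $V_g g \in C_0(\mathbb{R}^{2n})$ to choose $R$ so that diagonal dominance forces invertibility. Your per-entry bound $|V_g g(z)| < 1/m$ followed by Gershgorin is, if anything, a slightly cleaner bookkeeping of the same diagonal-dominance argument the paper uses.
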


\begin{proof}
We assume that $\|g\|_{L^{2}(\mathbb{R}^n)} = 1$ as we can normalize $g$ without altering the linear independence. The claim is equivalent, by Proposition \ref{equvalence_between_fully_interpolating_and_HRT_conjecture}, to the fact that the matrix \[\Omega_{g} := \big\{\big\langle V_{g}(M_{\omega_{j}}T_{x_{j}}g), V_{g}(M_{\omega_{i}}T_{x_{i}}g) \big\rangle \big\}_{i,j = 1}^{m} = \big\{e^{-2\pi i x_j \cdot  (\omega_i - \omega_j)}V_{g}g(x_i - x_j, \omega_i - \omega_j)\big\}_{i,j = 1}^m\] is invertible. Notice that the diagonal terms of $\Omega_{g}$ are all $1$'s. Since $V_{g}g$ is continuous and vanishes at infinity, we can find $R > 0$ such that \[\sum_{j = 1}^{m}|V_{g}g(x_i - x_j, \omega_i - \omega_j)| \leq 1, \qquad i = 1, \dots, m,\] for all points $(x_1,\omega_1), \dots, (x_m, \omega_m)$ satisfying the condition \eqref{far_spread_points}. This guarantees that the matrix $\Omega_{g}$ is diagonally dominant and hence invertible. 
\end{proof}

\begin{remark}
We would like to bring up that Proposition \ref{equvalence_between_fully_interpolating_and_HRT_conjecture} is implicitly commented on in the paper \cite{grochenig2015linear} through frame theory terminology. More precisely, the author investigates the \textit{Grammian matrix} corresponding to the time-frequency shifts $\{M_{\omega_k}T_{x_k}g\}_{k = 1}^{m}$. The invertibility of the Grammian matrix is easily seen to be equivalent to the statement that the corresponding Gabor space $V_{g}(L^{2}(\mathbb{R}^{n}))$ is fully interpolating. We hope the connection with reproducing kernel Hilbert spaces adds a machinery that can help shed light on some aspects of the HRT-Conjecture. 
\end{remark}

\section{Wavelet Completeness}
\label{sec: Wavelet_Completeness}
In this final section we will look at how much of $L^{2}(G)$ the wavelet spaces $\mathcal{W}_{g}(\mathcal{H}_{\pi})$ collectively fill up. Let $\pi:G \to \mathcal{U}(\mathcal{H}_{\pi})$ be a square integrable representation and let $\mathcal{A}_{\pi}$ denote the equivalence classes of admissible vectors in $\mathcal{H}_{\pi}$ modulo rotations by elements of $\mathbb{T}$. From Example \ref{Example_abelian_groups} we see that the collection
\begin{equation*}
        \underset{g \in \mathcal{A}_{\pi}}{\textrm{span}}\big\{\mathcal{W}_{g}f \, : \, f \in \mathcal{H}_{\pi}\big\} \subset L^{2}(G)
\end{equation*} 
does not need to be dense in $L^{2}(G)$. To combat this we will start to vary the square integrable representation $\pi$ as well. If $\widehat{G}_{s}$ denotes the equivalence classes of square integrable representations of $G$, then we consider
\begin{equation}
\label{direct_sum_before_closure}
        \bigoplus_{\pi \in \widehat{G}_s}
        \underset{g \in \mathcal{A}_{\pi}}{\textrm{span}}\big\{\mathcal{W}_{g}f \, : \, f \in \mathcal{H}_{\pi}\big\} \subset L^{2}(G).
\end{equation} 
It is straightforward to check that \eqref{direct_sum_before_closure} is a well-defined direct sum, see \cite[Lemma 2.24]{fuhr2005abstract} for details.

\begin{example}
To make matters more concrete, let us first consider the group $G = \mathbb{T}$. Any unitary representation of $\mathbb{T}$ is equivalent through a unitary intertwining operator to one of the representations $\pi_{n}:\mathbb{T} \to \mathbb{T}$ for $n \in \mathbb{Z}$ given by \[\pi_{n}\big(e^{i \theta}\big) := e^{i n \theta}, \quad \theta \in \mathbb{R}.\] For the representation $\pi_{n}$ we see that \[\mathcal{W}_{1}1\big(e^{i \theta}\big) = \big\langle 1, \pi_{n}\big(e^{i \theta}\big)1 \big\rangle = e^{-in\theta}.\] This gives precisely the Fourier expansion of square integrable periodic functions since
\[\overline{\bigoplus_{\pi \in \widehat{G}_s}
        \underset{g \in \mathcal{A}_{\pi}}{\textrm{span}}\big\{\mathcal{W}_{g}f \, : \, f \in \mathcal{H}_{\pi}\big\}} = \overline{ \bigoplus_{n \in \mathbb{Z}} \textrm{span}\big\{e^{in\theta} \,: \, \theta \in \mathbb{R}\big\}} = L^{2}(\mathbb{T}).\]
\end{example}

Based on the observations above, we formulate the following conjecture.

\begin{conjecture*}[Wavelet Completeness]
Characterize the locally compact groups $G$ that satisfy
\begin{equation}
\label{wavelet_conjecture}
    \begin{split}
        \overline{\bigoplus_{\pi \in \widehat{G}_s}
        \underset{g \in \mathcal{A}_{\pi}}{\rm{span}}\big\{\mathcal{W}_{g}f \, : \, f \in \mathcal{H}_{\pi}\big\}} = L^{2}(G).
    \end{split}
\end{equation}
\end{conjecture*}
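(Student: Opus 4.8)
The plan is to recast the characterization problem as a statement about the left regular representation $L$ of $G$ on $L^{2}(G)$. First I would show that, for a fixed square integrable $\pi$, the space $\overline{\operatorname{span}}_{g \in \mathcal{A}_{\pi}}\{\mathcal{W}_{g}f : f \in \mathcal{H}_{\pi}\}$ coincides with the $\pi$-isotypic part of $L$, meaning the largest closed $L$-invariant subspace of $L^{2}(G)$ on which $L$ restricts to a (possibly infinite) multiple of $\pi$. The inclusion ``$\subseteq$'' is immediate from the intertwining relation $\mathcal{W}_{g}(\pi(y)f) = L_{y}\mathcal{W}_{g}f$ established before Proposition \ref{spaces_are_RKHS}, which exhibits each $\mathcal{W}_{g}(\mathcal{H}_{\pi})$ as a left-invariant copy of $\pi$ inside $L^{2}(G)$. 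For the reverse inclusion I would use the Duflo--Moore orthogonality relations \eqref{wavelet_transform_orthogonality} together with irreducibility: as $g$ ranges over admissible vectors, the spaces $\mathcal{W}_{g}(\mathcal{H}_{\pi})$ should sweep out every cyclic $\pi$-subrepresentation sitting inside $L^{2}(G)$, so no part of the isotypic component is missed.

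Next, Theorem \ref{equality_of_wavelet_spaces_general_statement} guarantees that inequivalent representations contribute orthogonally, since non-trivial intersection of their wavelet spaces would force the representations to be equivalent. Consequently the left-hand side of \eqref{wavelet_conjecture} is exactly the orthogonal sum over $\pi \in \widehat{G}_{s}$ of the $\pi$-isotypic components, that is, the part of $L^{2}(G)$ built from the square integrable (discrete-series) representations. Wavelet completeness then becomes equivalent to the assertion that $L$ is a discrete orthogonal sum of square integrable representations, or in Plancherel language, that the Plancherel decomposition of $L^{2}(G)$ is atomic and supported entirely on $\widehat{G}_{s}$.

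With this reformulation the two endpoints fall out cleanly. For compact $G$, Peter--Weyl theory decomposes $L$ as the orthogonal sum of all matrix-coefficient blocks, each irreducible appearing with multiplicity equal to its dimension; since every such representation is finite-dimensional and hence square integrable, the sum exhausts $L^{2}(G)$. For the reduced Heisenberg group $\mathbb{H}_{r}^{n}$, the square integrable representations are precisely the Schr\"{o}dinger representations with non-trivial central character $e^{2\pi i k \tau}$ for $k \in \mathbb{Z}\setminus\{0\}$, and each associated wavelet transform lives in the $k$-th Fourier mode in the central variable $\tau$ by the form of \eqref{Schrodinger_representation}. Decomposing $L^{2}(\mathbb{H}_{r}^{n}) = \bigoplus_{k \in \mathbb{Z}} L^{2}(\mathbb{R}^{2n})$ along the central $\mathbb{T}$-factor, the wavelet spaces never reach the $k=0$ mode, where the non-square-integrable characters of $\mathbb{R}^{2n}$ reside; hence the closure is a proper subspace and $\mathbb{H}_{r}^{n}$ is not wavelet complete.

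The hard part will be the general characterization step, namely identifying the closed sum with the ``square integrable part'' of the Plancherel decomposition for an arbitrary locally compact $G$. This identification is transparent for second countable type~I unimodular groups, where direct-integral theory applies and the Duflo--Moore operators are scalar, but it is delicate when $G$ is non-unimodular (unbounded Duflo--Moore operators, as in Proposition \ref{Duflo_Moore_theorem}) or non-type-I, where the decomposition of $L$ into irreducibles can fail outright. I therefore expect the honest conclusion to be a criterion rather than a clean dichotomy: under suitable regularity hypotheses, $G$ is wavelet complete if and only if its Plancherel measure is purely atomic and concentrated on the discrete series, with the compact case and the reduced Heisenberg counterexample serving as the two decisive illustrations.
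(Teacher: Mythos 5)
You should first be clear about the status of this statement: it is posed in the paper as an open problem (``characterize the locally compact groups \dots''), and the paper offers no proof of a characterization. What the paper actually establishes are the two endpoints, namely that compact groups are wavelet complete (via Peter--Weyl, \cite[Theorem 5.11]{folland2016course}) and that the reduced Heisenberg groups are not (Proposition \ref{wavelet_completeness_of_HG}). For these two endpoints your arguments coincide with the paper's: Peter--Weyl for compact $G$, and for $\mathbb{H}_{r}^{n}$ the Stone--von Neumann classification of the square integrable representations into the $\rho_{r,m}$ with $m \in \mathbb{Z} \setminus \{0\}$, followed by the observation that every coefficient $\mathcal{W}_{g}f$ carries a non-trivial central character. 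Your phrasing that the wavelet spaces ``never reach the $k=0$ mode'' of the central Fourier decomposition is exactly the paper's computation $\int_{0}^{1} e^{2\pi i m \tau}\, d\tau = 0$, which shows that every $h(x,\omega)$ independent of $\tau$ is orthogonal to all $\mathcal{W}_{g}f$.

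Your isotypic reformulation goes beyond the paper and is correct in substance: identifying the closure of \eqref{direct_sum_before_closure} with the discrete part of the left regular representation is standard (it is essentially the framework of \cite{fuhr2005abstract}, which the paper cites for the well-definedness of the direct sum), so wavelet completeness is indeed equivalent to $L^{2}(G)$ decomposing discretely into square integrable representations, and --- for second countable unimodular type I groups --- to the Plancherel measure being purely atomic and concentrated on $\widehat{G}_{s}$. Two caveats. First, your appeal to Theorem \ref{equality_of_wavelet_spaces_general_statement} for orthogonality of the contributions of inequivalent representations is insufficient: that theorem yields trivial intersection, which is strictly weaker than orthogonality. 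What you need are the inter-representation orthogonality relations of Duflo--Moore; the paper's Proposition \ref{Duflo_Moore_theorem} records only the intra-representation relations \eqref{wavelet_transform_orthogonality}. (For the density question itself this is harmless, since the closure of the span does not care whether the summands are orthogonal, but your description of the left-hand side of \eqref{wavelet_conjecture} as an orthogonal sum of isotypic components requires the stronger fact; likewise your ``sweep out every cyclic $\pi$-subrepresentation'' step needs the standard but unproved-here fact that every subrepresentation of $L$ equivalent to $\pi$ is a coefficient space.) Second, and more importantly, the Plancherel reformulation is a translation rather than a characterization: ``purely atomic Plancherel measure supported on the discrete series'' restates wavelet completeness in representation-theoretic language without isolating a verifiable group-theoretic criterion, as you yourself acknowledge. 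So your proposal correctly recovers everything the paper proves, by essentially the same means, and correctly situates the general problem --- but, like the paper, it leaves the conjecture open.
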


We say that a locally compact group $G$ is \textit{wavelet complete} if \eqref{wavelet_conjecture} holds for $G$. For wavelet complete groups we can view the decomposition \eqref{wavelet_conjecture} conceptually as a generalized multiresolution analysis. An obvious condition that needs to be satisfied for $G$ to be wavelet complete is $\widehat{G}_{s} \neq \emptyset$. Hence $\mathbb{Z}$ and any other abelian non-compact group is not wavelet complete. Any compact group is easily seen to be wavelet complete from Peter-Weyl theory, see e.g. \cite[Theorem 5.11]{folland2016course}. The following example illustrates that wavelet completeness is a non-trivial notion.

\begin{proposition}
\label{wavelet_completeness_of_HG}
The reduced Heisenberg groups $\mathbb{H}_{r}^{n}$ are not wavelet complete. 
\end{proposition}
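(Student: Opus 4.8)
The plan is to exploit the Fourier decomposition of $L^2(\mathbb{H}_r^n)$ along the central circle $\mathbb{T}$ and to show that every wavelet space is confined to the \emph{nonzero} Fourier modes, leaving a nontrivial orthogonal complement. Writing a point of $\mathbb{H}_r^n = \mathbb{R}^{2n}\times\mathbb{T}$ as $(x,\omega,e^{2\pi i\tau})$ and using that the Haar measure is the product measure, I would first decompose
\[
L^2(\mathbb{H}_r^n) = \bigoplus_{j\in\mathbb{Z}} \mathcal{L}_j, \qquad \mathcal{L}_j := \big\{F : F(x,\omega,e^{2\pi i\tau}) = e^{2\pi i j\tau}\,h(x,\omega),\ h \in L^2(\mathbb{R}^{2n})\big\},
\]
which is just the expansion in Fourier series in the variable $\tau$. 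The summand $\mathcal{L}_0$ of functions independent of $\tau$ is closed, nonzero, and orthogonal to every $\mathcal{L}_j$ with $j\neq 0$. The whole argument reduces to showing that the sum in \eqref{wavelet_conjecture} is contained in $\mathcal{L}_0^{\perp}$.

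Next I would identify $\widehat{(\mathbb{H}_r^n)}_s$. The center of $\mathbb{H}_r^n$ is $\{0\}\times\{0\}\times\mathbb{T}$, whose characters are $e^{2\pi i\tau}\mapsto e^{2\pi i k\tau}$ for $k\in\mathbb{Z}$. By Schur's Lemma \ref{Schur's_lemma} an irreducible representation acts on the center by a scalar, hence through one such character. The representations with $k=0$ factor through the abelian noncompact quotient $\mathbb{H}_r^n/\mathbb{T}\cong\mathbb{R}^{2n}$; as observed in Example \ref{Example_abelian_groups}, these admit no square integrable vectors. For each $k\neq 0$ the Stone--von Neumann theorem furnishes a unique irreducible representation $\rho_k$ with central character $k$ (the Schr\"odinger representation \eqref{Schrodinger_representation} being the case $k=1$), and the computation showing $\rho_1$ square integrable applies verbatim to each $\rho_k$. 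Thus $\widehat{(\mathbb{H}_r^n)}_s = \{[\rho_k] : k\in\mathbb{Z}\setminus\{0\}\}$.

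The key transformation law is that for $f,g\in\mathcal{H}_{\rho_k}$ and $s\in\mathbb{R}$,
\[
\mathcal{W}_g f\big(x,\omega,e^{2\pi i(\tau+s)}\big) = \big\langle f, \rho_k(x,\omega,e^{2\pi i\tau})\,e^{2\pi i k s}g\big\rangle = e^{-2\pi i k s}\,\mathcal{W}_g f\big(x,\omega,e^{2\pi i\tau}\big),
\]
using that the central element $e^{2\pi i s}$ acts by the scalar $e^{2\pi i k s}$. Hence $\mathcal{W}_g f \in \mathcal{L}_{-k}$, so all wavelet spaces attached to $\rho_k$ lie entirely in $\mathcal{L}_{-k}$ with $-k \neq 0$. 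Summing over $k\in\mathbb{Z}\setminus\{0\}$ and over admissible vectors, the entire sum in \eqref{wavelet_conjecture}, and therefore its closure, is contained in the proper closed subspace $\mathcal{L}_0^{\perp}\subsetneq L^2(\mathbb{H}_r^n)$, since $\mathcal{L}_0\neq\{0\}$. Consequently $\mathbb{H}_r^n$ is not wavelet complete. The one genuinely nontrivial input is the classification of $\widehat{(\mathbb{H}_r^n)}_s$ in the second step --- specifically the fact that no square integrable representation carries the trivial central character --- since this is exactly what guarantees that the mode $\mathcal{L}_0$ is never reached; the Fourier decomposition and the transformation law are routine.
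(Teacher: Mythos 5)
Your proposal is correct and takes essentially the same approach as the paper: there, too, the square integrable representations are identified (via a variant of the Stone--von Neumann theorem) as the dilated Schr\"{o}dinger representations $\rho_{r,m}$ with $m \in \mathbb{Z} \setminus \{0\}$, and the $\tau$-independent functions are shown orthogonal to every wavelet space because $\int_{0}^{1} e^{2\pi i m \tau}\, d\tau = 0$ for $m \neq 0$ --- which is precisely your observation that each wavelet space lies in a nonzero central Fourier mode $\mathcal{L}_{-k}$. The only difference is cosmetic: you derive the classification of $\widehat{(\mathbb{H}_{r}^{n})}_{s}$ (in particular, that the trivial central character admits no square integrable representations), whereas the paper cites it directly from \cite[Corollary 9.3.5]{grochenig2001foundations}.
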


\begin{proof}
A variant of the Stone-von Neumann Theorem \cite[Corollary 9.3.5]{grochenig2001foundations} implies that the only square integrable representations of $\mathbb{H}_{r}^{n}$ are the Schr\"{o}dinger representation $\rho_{r}$ given in \eqref{Schrodinger_representation} along with appropriate dilations \[\rho_{r,m}\Big(x,\omega,e^{2\pi i \tau}\Big) := e^{2\pi i m \tau}e^{\pi i m x \cdot \omega}T_{mx}M_{\omega},\]
for $m \in \mathbb{Z} \setminus \{0\}$. We will show that any $h \in L^{2}(\mathbb{H}_{r}^{n})$ on the form $h\big(x,\omega,e^{2\pi i \tau}\big) = h(x,\omega)$ is orthogonal to $\mathcal{W}_{g}f$ for all $f,g \in L^{2}(\mathbb{R}^{n})$ and all the representations $\rho_{r,m}$. We compute that \begin{align*}
    \langle h, \mathcal{W}_{g}f \rangle_{L^{2}(\mathbb{H}_{r}^{n})} & = \int_{\mathbb{H}_{r}^{n}}h(x,\omega)\overline{\mathcal{W}_{g}f(x,\omega,e^{2\pi i \tau})} \, dx \, d\omega \, d\tau \\ & = \int_{0}^{1}e^{2 \pi i m \tau} \, d\tau \int_{\mathbb{R}^{2n}}h(x,\omega)e^{-\pi i m x \cdot \omega}\overline{V_{g}f(mx,\omega)} \, dx \, d\omega \\ & = 0,
\end{align*}
since $m \in \mathbb{Z} \setminus \{0\}$.
\end{proof}


\bibliographystyle{abbrv}
\bibliography{main}

\begin{thebibliography}{10}

\bibitem{abreu2015measures}
L.~D. Abreu and J.~M. Pereira.
\newblock Measures of localization and quantitative {N}yquist densities.
\newblock {\em Applied and Computational Harmonic Analysis}, 38(3):524--534,
  2015.

\bibitem{aronszajn1950theory}
N.~Aronszajn.
\newblock Theory of reproducing kernels.
\newblock {\em Transactions of the American mathematical society},
  68(3):337--404, 1950.

\bibitem{austad2020heisenberg}
A.~Austad and U.~Enstad.
\newblock Heisenberg modules as function spaces.
\newblock {\em Journal of Fourier Analysis and Applications}, 26(2):1--28,
  2020.

\bibitem{balan2010almost}
R.~Balan and I.~Krishtal.
\newblock An almost periodic noncommutative {W}iener's lemma.
\newblock {\em Journal of mathematical analysis and applications},
  370(2):339--349, 2010.

\bibitem{bekka2008kazhdan}
B.~Bekka, P.~de~La~Harpe, and A.~Valette.
\newblock {\em Kazhdan's Property (T)}.
\newblock Cambridge University Press, 2008.

\bibitem{berlinet2011reproducing}
A.~Berlinet and C.~Thomas-Agnan.
\newblock {\em Reproducing Kernel Hilbert Spaces in Probability and
  Statistics}.
\newblock Springer Science \& Business Media, 2011.

\bibitem{cheney2009course}
E.~W. Cheney and W.~A. Light.
\newblock {\em A Course in Approximation Theory}.
\newblock American Mathematical Soc., 2009.

\bibitem{currey_oussa}
B.~Currey and V.~Oussa.
\newblock Translates of functions on the {H}eisenberg group and the {HRT}
  conjecture.
\newblock {\em Canadian Mathematical Bulletin}, page 1–12, 2020.

\bibitem{dahlke2015harmonic}
S.~Dahlke, F.~De~Mari, P.~Grohs, and D.~Labate.
\newblock {\em Harmonic and Applied Analysis: From Groups to Signals}.
\newblock Birkh{\"a}user, 2015.

\bibitem{daubechies1992ten}
I.~Daubechies.
\newblock {\em Ten Lectures on Wavelets}.
\newblock SIAM, 1992.

\bibitem{de1978local}
J.~de~Vries.
\newblock The local weight of an effective locally compact transformation group
  and the dimension of ${L}^{2}({G})$.
\newblock 39(2):319--323, 1978.

\bibitem{deitmar2014principles}
A.~Deitmar and S.~Echterhoff.
\newblock {\em Principles of Harmonic Analysis}.
\newblock Springer, 2014.

\bibitem{duflo1976regular}
M.~Duflo and C.~C. Moore.
\newblock On the regular representation of a nonunimodular locally compact
  group.
\newblock {\em Journal of functional analysis}, 21(2):209--243, 1976.

\bibitem{enstad2019balian}
U.~Enstad.
\newblock The {B}alian-{L}ow theorem for locally compact abelian groups and
  vector bundles.
\newblock {\em Journal de Math{\'e}matiques Pures et Appliqu{\'e}es}, 2019.

\bibitem{feichtinger1988unified}
H.~G. Feichtinger and K.~Gr{\"o}chenig.
\newblock A unified approach to atomic decompositions via integrable group
  representations.
\newblock In {\em Function spaces and applications}, pages 52--73. Springer,
  1988.

\bibitem{feichtinger1989banach1}
H.~G. Feichtinger and K.~Gr{\"o}chenig.
\newblock Banach spaces related to integrable group representations and their
  atomic decompositions, {I}.
\newblock {\em Journal of Functional analysis}, 86(2):307--340, 1989.

\bibitem{feichtinger1989banach2}
H.~G. Feichtinger and K.~Gr{\"o}chenig.
\newblock Banach spaces related to integrable group representations and their
  atomic decompositions, {II}.
\newblock {\em Monatshefte f{\"u}r Mathematik}, 108(2-3):129--148, 1989.

\bibitem{folland2016course}
G.~B. Folland.
\newblock {\em A Course in Abstract Harmonic Analysis}.
\newblock Chapman and Hall/CRC, 2016.

\bibitem{fuhr2005abstract}
H.~F{\"u}hr.
\newblock {\em Abstract Harmonic Analysis of Continuous Wavelet Transforms}.
\newblock Springer, 2005.

\bibitem{cont_wavelet_transform}
M.~Ghandehari and K.~F. Taylor.
\newblock Images of the continuous wavelet transform.
\newblock In {\em Operator Methods in Wavelets, Tilings, and Frames}, pages
  55--65. American Mathematical Society, 2014.

\bibitem{grochenig2001foundations}
K.~Gr{\"o}chenig.
\newblock {\em Foundations of Time-Frequency Analysis}.
\newblock Springer Science \& Business Media, 2001.

\bibitem{grochenig2015linear}
K.~Gr{\"o}chenig.
\newblock Linear independence of time-frequency shifts?
\newblock {\em Monatshefte f{\"u}r Mathematik}, 177(1):67--77, 2015.

\bibitem{grochenig2019balian}
K.~Gr{\"o}chenig, J.~L. Romero, D.~Rottensteiner, and J.~T. van Velthoven.
\newblock {B}alian-{L}ow type theorems on homogeneous groups.
\newblock {\em arXiv preprint, arXiv:1908.03053}, 2019.

\bibitem{grochenig2018orthonormal}
K.~Gr{\"o}chenig and D.~Rottensteiner.
\newblock Orthonormal bases in the orbit of square-integrable representations
  of nilpotent {L}ie groups.
\newblock {\em Journal of Functional Analysis}, 275(12):3338--3379, 2018.

\bibitem{grossmann1985transforms}
A.~Grossmann, J.~Morlet, and T.~Paul.
\newblock Transforms associated to square integrable group representations.
  {I}. {G}eneral results.
\newblock {\em Journal of Mathematical Physics}, 26(10):2473--2479, 1985.

\bibitem{heil2006linear}
C.~Heil.
\newblock Linear independence of finite {G}abor systems.
\newblock In {\em Harmonic analysis and applications}, pages 171--206.
  Springer, 2006.

\bibitem{heil1996linear}
C.~Heil, J.~Ramanathan, and P.~Topiwala.
\newblock Linear independence of time-frequency translates.
\newblock {\em Proceedings of the American Mathematical Society},
  124(9):2787--2795, 1996.

\bibitem{heil2015hrt}
C.~Heil and D.~Speegle.
\newblock The {HRT} conjecture and the zero divisor conjecture for the
  {H}eisenberg group.
\newblock In {\em Excursions in Harmonic Analysis, Volume 3}, pages 159--176.
  Springer, 2015.

\bibitem{hutnikova2010alternative}
M.~Hutn\'{i}kov\'{a} and O.~Hutn\'{i}k.
\newblock An alternative description of {G}abor spaces and {G}abor-{T}oeplitz
  operators.
\newblock {\em Reports on Mathematical Physics}, 66(2):237--250, 2010.

\bibitem{jakobsen2018duality}
M.~S. Jakobsen and F.~Luef.
\newblock Duality of {G}abor frames and {H}eisenberg modules.
\newblock {\em arXiv preprint, arXiv:1806.05616}, 2018.

\bibitem{kreisel2019letter}
M.~Kreisel.
\newblock Letter to the editor: Linear independence of time-frequency shifts up
  to extreme dilations.
\newblock {\em Journal of Fourier Analysis and Applications}, 25(6):3214--3219,
  2019.

\bibitem{kutyniok2002linear}
G.~Kutyniok.
\newblock Linear independence of time-frequency shifts under a generalized
  {S}chr{\"o}dinger representation.
\newblock {\em Archiv der Mathematik}, 78(2):135--144, 2002.

\bibitem{linnell1999neumann}
P.~Linnell.
\newblock Von {N}eumann algebras and linear independence of translates.
\newblock {\em Proceedings of the American Mathematical Society},
  127(11):3269--3277, 1999.

\bibitem{luef2009projective}
F.~Luef.
\newblock Projective modules over noncommutative tori are multi-window {G}abor
  frames for modulation spaces.
\newblock {\em Journal of Functional Analysis}, 257(6):1921--1946, 2009.

\bibitem{luef2018balian}
F.~Luef.
\newblock The {B}alian-{L}ow theorem and noncommutative tori.
\newblock {\em Expositiones Mathematicae}, 36(2):221--227, 2018.

\bibitem{luef2020wiener}
F.~Luef and E.~Skrettingland.
\newblock A {W}iener {T}auberian theorem for operators and functions.
\newblock {\em arXiv preprint, arXiv:2005.04160}, 2020.

\bibitem{nicola2019note}
F.~Nicola and S.~I. Trapasso.
\newblock A note on the {HRT} conjecture and a new uncertainty principle for
  the short-time {F}ourier transform.
\newblock {\em arXiv preprint, arXiv:1911.12241}, 2019.

\bibitem{okoudjou2019extension}
K.~A. Okoudjou.
\newblock Extension and restriction principles for the {HRT} conjecture.
\newblock {\em Journal of Fourier Analysis and Applications}, 25(4):1874--1901,
  2019.

\bibitem{paulsen2016introduction}
V.~I. Paulsen and M.~Raghupathi.
\newblock {\em An Introduction to the Theory of Reproducing Kernel Hilbert
  Spaces}, volume 152.
\newblock Cambridge University Press, 2016.

\bibitem{romero2020dual}
J.~L. Romero, J.~T. van Velthoven, and F.~Voigtlaender.
\newblock On dual molecules and convolution-dominated operators.
\newblock {\em arXiv preprint, arXiv:2001.09609}, 2020.

\bibitem{wong2002wavelet}
M.~W. Wong.
\newblock {\em Wavelet Transforms and Localization Operators}, volume 136.
\newblock Springer Science \& Business Media, 2002.

\end{thebibliography}

\Addresses

\end{document}